\documentclass[10pt, letterpaper]{article}

\usepackage{amsmath, amssymb, amsthm, graphicx, amscd, amsfonts, mathrsfs, mathtools, tikz}
\usepackage{geometry}
\usepackage{hyperref}
\usepackage{setspace}
\usepackage{microtype}
\usepackage{enumitem}
\usepackage{float}

\newtheorem{theorem}{Theorem}[section]
\newtheorem{lemma}[theorem]{Lemma}

\newtheorem{corollary}[theorem]{Corollary}
\theoremstyle{definition}

\newcommand{\Z}{\mathbb{Z}}
\newcommand{\Q}{\mathbb{Q}}
\newcommand{\R}{\mathbb{R}}
\newcommand{\C}{\mathbb{C}}
\newcommand{\F}{\mathbb{F}}
\newcommand{\G}{\mathbb{G}}
\newcommand{\OT}{\mathcal{O}_T}

\newcommand{\bb}[1]{\mathbb{#1}}
\newcommand{\bd}{\mathbf{d}}
\newcommand{\Tr}{\operatorname{Tr}}
\newcommand{\ord}{\operatorname{ord}}
\newcommand{\lcm}{\operatorname{lcm}}
\newcommand{\supp}{\operatorname{supp}}
\newcommand{\Gal}{\operatorname{Gal}}
\newcommand{\Teich}{\operatorname{Teich}}

\newcommand{\HP}{\operatorname{HP}}
\newcommand{\NP}{\operatorname{NP}}
\newcommand{\ALam}{\OT}

\begin{document}

\title{On partial $T$-adic exponential sums \\ and partial exponential sums with $p$-power conductor}
\author{C. Douglas Haessig}
\date{\today}

\maketitle

\begin{abstract}
Liu and Wan \cite{Liu_2009} introduced $T$-adic exponential sums as a way to interpolate all character sums with character $\psi$ having $p$-power conductor. In this paper, we generalize their $T$-adic theory to partial $T$-adic exponential sums. We prove that the associated $L$-functions are $T$-adic meromorphic, and as a consequence, give a $p$-adic proof of rationality for all partial $L$-functions of characters $\psi$. We also give Newton-over-Hodge estimates.
\end{abstract}

\tableofcontents
\vspace{1em}

\section{Introduction}

Let $p$ be a prime and let $q = p^a$. Denote by $\Q_q$ the unramified extension of $\Q_p$ of degree $a$, and let $\Z_q$ be its ring of integers. Consider a Laurent polynomial
\[
f(x) = \sum_{u \in \supp(f)} c_u x^u \in \Z_q[x_1^{\pm 1}, \dots, x_n^{\pm 1}]
\]
whose coefficients $c_u$ are Teichm\"uller units in $\Z_q$.

We fix $\bd = (d_1, \dots, d_n) \in \Z_{\geq 1}^n$, and set $d := \lcm(d_1, \dots, d_n)$. For any $r \geq 1$, denote by $\mu_{q^r - 1}$ the group of $(q^r - 1)$-th roots of unity.  For each $k \geq 1$, define the \emph{partial $T$-adic exponential sum} by
\[
S_\bd(f, k; T) := \sum (1 + T)^{\Tr_{\Q_{q^{kd}}/\Q_p}(f(x_1, \dots, x_n))} \;\in\; \Z_p[[T]],
\]
where the sum runs over $x_i \in \mu_{q^{kd_i} - 1} \subset \Q_{q^{kd}}$ for $i = 1, \dots, n$. The associated \emph{partial $T$-adic $L$-function} is defined by
\[
L_\bd(f, T; s) := \exp\!\left( \sum_{k=1}^\infty S_\bd(f, k; T) \, \frac{s^k}{k} \right).
\]

The $T$-adic $L$-function interpolates all partial exponential sums of additive characters of arbitrary $p$-power conductor. Let us explain. Let $\psi: \Z_p \to \C_p^\times$ be a continuous, locally constant, additive character of conductor $p^m$. Setting $\pi_\psi := \psi(1) - 1$, then $\psi$ has the form $\psi(z) = (1 + \pi_\psi)^z$ for all $z \in \Z_p$. For $k \geq 1$, define the partial exponential sum
\[
S_{\bd, \psi}(f, k) := \sum \psi \circ \Tr_{\Q_{q^{kd}}/\Q_p}\big(f(x_1, \dots, x_n)\big),
\]
where the sum runs over $x_i \in \mu_{q^{kd_i} - 1} \subset \Q_{q^{kd}}$ for $1 \leq i \leq n$. The associated $L$-function is
\[
L_{\bd, \psi}(f; s) := \exp\!\left( \sum_{k=1}^\infty S_{\bd, \psi}(f, k) \, \frac{s^k}{k} \right).
\]
By construction, specialization of the $T$-adic partial exponential sum at $T = \pi_\psi$ recovers the partial exponential sum:
\[
S_{\bd, \psi}(f, k) = S_\bd(f, k; T)\big|_{T = \pi_\psi} \qquad \text{and} \qquad L_{\bd, \psi}(f; s) = L_\bd(f, T; s)\big|_{T = \pi_\psi}.
\]

Our main results are the following, which is proven in pieces throughout the paper.

\begin{theorem}\label{thm:main_meromorphy}
The partial $T$-adic $L$-function $L_\bd(f, T; s)$ is $T$-adic meromorphic over $\Z_p[[T]]$. Specifically, there exist $T$-adic entire $A(T, s), B(T, s) \in 1 + s \Z_p[[T]][[s]]$ such that $L_\bd(f, T; s) = \frac{A(T, s)}{B(T, s)}$. The $T$-adic Newton polygons of $A(T, s)$ and $B(T, s)$ are both bounded below by $a(p-1) \cdot \HP_\bd(\Delta)^{\oplus 2^{n-1}}$; see Section \ref{sec:noh_L_func} for the definition of the lower bound.
\end{theorem}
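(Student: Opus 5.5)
We follow Dwork's trace formula method as adapted to the $T$-adic setting by Liu and Wan, modified for the partial structure. First, we replace the $T$-adic additive character by a $T$-adic splitting function. Let $\pi$ be defined by $E(\pi) = 1+T$, where $E$ is the Artin--Hasse exponential, so that $\pi$ is a uniformizer of $\Z_p[[T]]$, i.e. $\Z_p[[\pi]] = \Z_p[[T]]$. Then for a Teichm\"uller point $x$ of degree $r$ over $\Q_p$ we have
\[
(1+T)^{\Tr(c_u x^u)} = \prod_{i=0}^{r-1} E(\pi c_u^{\sigma^i} x^{u p^i}).
\]
We set $F(x) = \prod_{u \in \supp(f)} E(\pi c_u x^u)$, and let $F_a$ be the corresponding $a$-fold Frobenius-twisted product. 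The key bookkeeping issue is that $x_i \in \mu_{q^{kd_i}-1}$ have different degrees. We handle this by working over $\Q_{q^d}$ and using the substitution/Frobenius $\sigma^a$ acting on $x_i$ through the extension degree $d_i$. Concretely, $x_i$ ranges over the fixed points of $x_i \mapsto x_i^{q^{k d_i}}$, so we introduce the operator
\[
\psi_\bd : x^u \mapsto x^{u/q^{\bd}}
\]
when divisible, a ``partial'' Dwork operator with $q^{d_i}$ in the $i$-th coordinate. We define $\alpha = \sigma^{-a d}\circ \psi_\bd \circ G$, where $G = \prod_j F_a^{\sigma^{aj}}(x^{q^{\bd\cdot j}})$ is an appropriate product making the chain of Frobenius twists close up after $d$ steps.

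Next, we build the Banach space. Let $\Delta$ be the Newton polyhedron and let $C(\Delta)$ be its cone. We take the space of series $\sum_{u\in C(\Delta)\cap\Z^n} a_u \pi^{w(u)} x^u$, with the weight $w$ adapted to the scaling by $\bd$ (this is where $\HP_\bd(\Delta)$ comes from), and $a_u \to 0$. Using $E(\pi z)=\sum \lambda_m \pi^m z^m$ with $\lambda_m \in \Z_p$, we check that $G$ has coefficients with $\pi$-order at least a weight function, so $\psi_\bd\circ G$ is completely continuous. Its matrix with respect to the weighted basis then has row $u$ divisible by $\pi^{c\, w(u)}$.

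We then prove the Dwork trace formula
\[
S_\bd(f,k;T) = (q^{k}_{\bd}-1)^n\text{-type factor}\cdot \Tr(\alpha^k),
\]
or more precisely $S_\bd(f,k;T) = \prod_i (q^{kd_i}-1)\Tr(\alpha^k)$. This is derived by counting fixed points coordinatewise and applying inclusion–exclusion over the torus. Writing $\prod_i(q^{kd_i}-1)$ as an alternating sum gives
\[
L_\bd(f,T;s) = \prod_{J \subseteq \{1,\dots,n\}} \det(I - q^{\bd_J} s\,\alpha)^{(-1)^{n-|J|+1}},
\]
where $q^{\bd_J}$ records $\prod_{i\in J} q^{d_i}$. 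Because $\alpha$ is nuclear over $\Z_p[[T]]$, each Fredholm determinant is $T$-adic entire. Grouping the factors with even and odd sign gives $A$ and $B$, each a product of $2^{n-1}$ determinants, both in $1+s\Z_p[[T]][[s]]$.

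Finally, for the Newton polygon bound, the weight estimate on the matrix of $\alpha$, after taking $\sigma$-linearity over $\Q_{q}$ into account (the factor $a$), and since $\operatorname{ord}_T \pi = 1/(p-1)$ gives the factor $(p-1)$ in $T$-adic normalization, yields that $\det(I - s\alpha)$ has $T$-adic Newton polygon above $a(p-1)\HP_\bd(\Delta)$ via the standard Hodge-bound lemma. The twists by $q^{\bd_J}$ only raise the polygon. Since the Newton polygon of a product lies above the sum of the individual lower bounds, $A$ and $B$ are each bounded below by $a(p-1)\HP_\bd(\Delta)^{\oplus 2^{n-1}}$.

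The main obstacle is the Frobenius setup with unequal degrees $d_i$. We must construct $\alpha$ so that the trace formula holds exactly for the mixed fields and $\psi_\bd$ still yields a clean weight estimate. We would first try to reduce to the case $d_i=d$ after the coordinate change $x_i\mapsto x_i$ restricted to the subgroup $\mu_{q^{kd_i}-1}\subset\mu_{q^{kd}-1}$. We would realize that restriction via the norm map.
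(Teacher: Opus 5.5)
There is a genuine gap, and it sits exactly at the step you flagged as the main obstacle: you never construct a valid Frobenius operator, and no operator of the kind you want can exist in general.

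\textbf{Your operator does not give the partial sum.} Your $\psi_\bd\circ G$ advances the $i$-th coordinate by $q^{d_i}$ per iteration. But splitting $(1+T)^{\Tr_{\Q_{q^{kd}}/\Q_p} f(x)}$ requires the factors $F_a(x_1^{q^m},\dots,x_n^{q^m})$, $0\le m<kd$, in which all coordinates move at the same speed.

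Take $\bd=(1,2)$ and $k=2$, so $x_1\in\mu_{q^2-1}$ and $x_2\in\mu_{q^4-1}$. The sum needs the pairs
$$(x_1,x_2),\ (x_1^q,x_2^q),\ (x_1,x_2^{q^2}),\ (x_1^q,x_2^{q^3}).$$
With the natural choice $G(x)=F_a(x)F_a(x^q)$ that works for $k=1$, the product $G(x)\,G(x_1^q,x_2^{q^2})$ instead produces
$$(x_1,x_2),\ (x_1^q,x_2^q),\ (x_1^q,x_2^{q^2}),\ (x_1,x_2^{q^3}).$$
So the trace formula fails for any $f$ that couples $x_1$ and $x_2$. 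The norm-map fallback does not help either, because $N_{\F_{q^{kd}}/\F_{q^{kd_i}}}$ depends on $k$ and so gives no fixed Laurent polynomial.

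\textbf{No choice of $\alpha$ can work in general.} An identity $S_\bd(f,k;T)=\prod_i(q^{kd_i}-1)\Tr(\alpha^k)$ with $\alpha$ completely continuous would make $C_\bd(f,T;s)=\det(I-s\alpha)$ $T$-adic entire. That is not true in general. The natural outcome is $C_\bd=\prod_{m\mid d}E_m^{\mu(m)}$, with denominators coming from the pieces with $\mu(m)=-1$, and nothing forces them to cancel. Heuristically, summing a fibre over $\F_{q^{2k}}$ gives $\Tr(F^{2k})=\Tr(F^k\mid\mathrm{Sym}^2)-\Tr(F^k\mid\wedge^2)$, which is a virtual object. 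This is why the paper proves only meromorphy, and why your description of $A$ and $B$ as products of $2^{n-1}$ Fredholm determinants of a single nuclear operator cannot be right.

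\textbf{The missing idea is unfolding.} The paper introduces:
\begin{itemize}
\item $N=\sum d_i$ variables $y_{i,j}$ with $j\in\Z/d_i\Z$;
\item the $\sigma$-invariant Laurent polynomial $G(y)=\sum_{j=0}^{d-1}f(y_{1,j},\dots,y_{n,j})$;
\item the ordinary Frobenius $\alpha_a=\psi_q\circ F_a$ on the Banach space attached to $\Delta(G)\subset\R^N$, which has uniform speed in all variables;
\item the cyclic shift $\sigma$, which commutes with $\alpha_a$.
\end{itemize}
The unequal degrees are absorbed into the fixed-point set $W_k=\{y:\sigma^{-1}(y^{q^k})=y\}$. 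This gives
$$S_\bd=\det(q^kI-P)\,\Tr(\sigma\circ\alpha_a^k).$$
Hence $C_\bd=\prod_\lambda(1-\lambda s)^{m_\lambda(\sigma)}$ with $m_\lambda(\sigma)=\Tr(\sigma\mid V_\lambda)\in\Z$, possibly negative. A Galois-descent plus integral-closure argument then puts the reduced $A$ and $B$ in $1+s\Z_p[[T]][[s]]$, and $L_\bd^{(-1)^{n-1}}=C_\bd^{\delta_\bd}$ transfers meromorphy to $L_\bd$.

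\textbf{The Newton polygon bound needs the same machinery.} $\HP_\bd(\Delta)=\HP^+(\Delta)\oplus\HP^-(\Delta)$ is built from the ranks of the $\sigma$-isotypic pieces $\rho_m(L_r)$ of the weight spaces of the unfolded cone. Bounding each $E_m$ by $a(p-1)\HP_m(\Delta)^{\oplus 1/\varphi(m)}$ uses the isotypic-balance lemma, which says $d_{\lambda,\zeta}$ depends only on the order of $\zeta$. A ``$\bd$-scaled weight'' on an $n$-dimensional cone does not produce this polygon.

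\textbf{A smaller error.} $\ord_T\pi=1$, not $1/(p-1)$. The factor $p-1$ comes from $w(pr)-w(r)=(p-1)w(r)$ in the estimate of the Frobenius matrix.
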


\begin{theorem}\label{thm:main_rationality}
Let $\psi$ be a continuous, locally constant, additive character on $\Z_p$ of conductor $p^m$. Then the partial $L$-function is a rational function $L_{\bd, \psi}(f; s) \in \Q(\zeta_{p^m})(s)$. Writing $L_{\bd, \psi}(f; s)^{(-1)^{n-1}} = A(s) / B(s)$ in reduced form for $A, B \in 1+s\Z[\zeta_{p^m}][s]$, then their $q$-adic Newton polygons satisfy
\[
\NP_q(A) \;\ge\; \HP_{\bd, \psi}^+(\Delta) \qquad \text{and} \qquad \NP_q(B) \;\ge\; \HP_{\bd, \psi}^-(\Delta).
\]
See Section \ref{sec:noh_L_func} for the definition of $\HP_{\bd, \psi}^\pm(\Delta)$.
\end{theorem}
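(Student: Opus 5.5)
We derive Theorem~\ref{thm:main_rationality} from Theorem~\ref{thm:main_meromorphy} by specializing $T = \pi_\psi$ and then running a Dwork-type rationality argument. Once rationality is known, the Newton polygon bounds follow from the Hodge bound.

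\textbf{Step 1: specialization.} Since $\psi$ has conductor $p^m$, $\pi_\psi = \zeta_{p^m}-1$ has $\ord_p(\pi_\psi) = 1/(p^{m-1}(p-1)) > 0$. Hence any $T$-adic entire series in $\Z_p[[T]][[s]]$ specializes at $T=\pi_\psi$ to a $p$-adic entire series in $s$ over $\Z_p[\zeta_{p^m}]$. The $T$-adic Newton polygon lower bound from Theorem~\ref{thm:main_meromorphy} becomes a $p$-adic growth bound: if a coefficient of $s^k$ has $T$-adic order at least $h(k)$, its specialization has $p$-adic order at least $h(k)\ord_p(\pi_\psi)$. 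Thus $L_{\bd,\psi}(f;s) = A(\pi_\psi,s)/B(\pi_\psi,s)$ is $p$-adic meromorphic on all of $\C_p$.

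\textbf{Step 2: rationality.} The sums $S_{\bd,\psi}(f,k)$ lie in $\Z[\zeta_{p^m}]$, and they are bounded archimedeanly at every complex embedding by $\prod_i (q^{kd_i}-1) \le q^{k|\bd|}$. So $L_{\bd,\psi}$ has coefficients in $\Z[\zeta_{p^m}]$ and converges complex-analytically on a disc of positive radius at every archimedean place. We then apply the Borel--Dwork criterion, in its form over number fields: a power series with algebraic-integer coefficients that is $p$-adically meromorphic on a sufficiently large disc (here all of $\C_p$), and has positive archimedean radii at all places, is rational. We verify the criterion with Hankel determinants. The $p$-adic meromorphy makes the Hankel determinants $p$-adically very small. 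Archimedean Hadamard bounds make their conjugates at most exponentially large. The product formula then forces the Hankel determinants to vanish for large indices, which gives rationality in $\Q(\zeta_{p^m})(s)$. Because the $p$-adic meromorphy holds on all of $\C_p$, the radius condition is satisfied trivially.

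\textbf{Step 3: Newton polygons.} Write $L^{(-1)^{n-1}} = A/B$ in reduced form. The zeros of $A$ and the poles of $B$ are among the zeros of the specialized entire series $A(\pi_\psi,s)$ and $B(\pi_\psi,s)$, or those with the roles exchanged, according to the sign. The main obstacle is cancellation: a zero of the numerator can cancel against a zero of the denominator. The bound therefore has to come from a finer source than the combined $\oplus 2^{n-1}$ bound.

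To handle this, we use the explicit construction underlying Theorem~\ref{thm:main_meromorphy}. There $L_\bd(f,T;s)^{(-1)^{n-1}}$ is an alternating product $\prod_i C_i(T, q^i s)^{(-1)^i \binom{n}{i}}$ of Fredholm determinants, coming from the Dwork trace formula on a Banach space with a Hodge-type basis. We define $\HP^+_{\bd,\psi}$ and $\HP^-_{\bd,\psi}$ as the Hodge polygons obtained by collecting the even and odd factors respectively, each at $T=\pi_\psi$ and normalized $q$-adically.

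By the standard lemma that a reduced rational function's numerator has Newton polygon lying above the Newton polygon of any entire numerator in a fraction representation, we get $\NP_q(A) \ge \NP_q\bigl(\prod_{\text{even}} C_i\bigr) \ge \HP^+_{\bd,\psi}(\Delta)$. The analogous argument gives $\NP_q(B) \ge \HP^-_{\bd,\psi}(\Delta)$. The comparison of the specialized Hodge polygon with the factor-by-factor $q$-adic scaling by $q^i$ is routine bookkeeping, but it has to be checked against whatever definition of $\HP^\pm_{\bd,\psi}$ the paper adopts.
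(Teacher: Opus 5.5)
Your Steps 1 and 2 follow the paper. You specialize the $T$-adic fraction of Corollary~\ref{cor: L is T-adic mero} at $T=\pi_\psi$ using $\ord_{\pi_\psi}\mathrm{sp}_\psi(h)\ge\ord_T h$, then apply Dwork's criterion with the archimedean bound $\prod_i(q^{kd_i}-1)$. One small point: integrality of the coefficients of $L_{\bd,\psi}$ does not follow from $S_{\bd,\psi}(f,k)\in\Z[\zeta_{p^m}]$, because exponentiating $\sum S_k s^k/k$ creates denominators. It comes from the Euler product, Theorem~\ref{thm:euler}, specialized at $T=\pi_\psi$.

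Step 3 has a genuine gap. You model $L_\bd^{(-1)^{n-1}}$ on the Liu--Wan picture, an alternating product of shifted entire Fredholm determinants $C(T,q^is)^{(-1)^i\binom{n}{i}}$. In the partial setting the relation is instead $L_\bd^{(-1)^{n-1}}=\prod_{J\subseteq\{1,\dots,n\}}C_\bd(f,T;q^{d_J}s)^{(-1)^{|J|}}$ with $d_J=\sum_{j\in J}d_j$. More importantly, $C_\bd={\det}_\sigma(I-s\alpha_a\mid B)$ is a $\sigma$-twisted determinant, which need not be entire; it is only meromorphic, $C_\bd=A/B$. So ``$\prod_{\text{even}}C_i$'' is not an entire numerator, and your comparison lemma has nothing to act on.

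The missing idea is a Hodge bound for the numerator and denominator of $C_\bd$ separately: $\NP_T(A)\ge a(p-1)\HP^+(\Delta)$ and $\NP_T(B)\ge a(p-1)\HP^-(\Delta)$. The paper first shows that the $\sigma$-eigenspace multiplicities $d_{\lambda,\zeta}$ depend only on the order of $\zeta$, using $\Tr(\sigma^{jb}\circ\alpha_a^k)=\Tr(\sigma^j\circ\alpha_a^k)$ for $b$ prime to $d$. This gives $C_\bd=\prod_{m\mid d}E_m^{\mu(m)}$ with $E_m^{\varphi(m)}=\det(I-s\alpha_a\mid B_R^{(m)})$. A weight-homogeneous orthonormal basis of $B_R^{(m)}$ then yields $\NP_T(E_m)\ge a(p-1)\HP_m(\Delta)^{\oplus 1/\varphi(m)}$.

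Only with this in hand does the bookkeeping go through. The numerator of $L_\bd^{(-1)^{n-1}}$ is $\prod_{|J|\text{ even}}A(T,q^{d_J}s)\prod_{|J|\text{ odd}}B(T,q^{d_J}s)$. After specializing and dividing slopes by $\ord_{\pi_\psi}q=a\varphi(p^m)$, each $J$ contributes $\frac{1}{p^{m-1}}\HP^{\varepsilon(J)}(\Delta)+d_J$. So odd $J$ enter the numerator bound through $\HP^-(\Delta)$, the polygon of the denominator of $C_\bd$, and no even/odd sorting of entire factors can produce this.

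You also cannot define $\HP^\pm_{\bd,\psi}$ to fit your argument, since the statement fixes them. Nor does Theorem~\ref{thm:main_meromorphy} suffice on its own. $T$-adically the shifts $s\mapsto q^{d_J}s$ are invisible, and the statement records exactly which shifts $d_J$ attach to $\HP^+$ and which to $\HP^-$.
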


Partial $L$-functions and partial zeta functions were introduced by Wan in \cite{MR1803946}, and their rationality proven in \cite{MR2027782}.  Further results were given by Fu and Wan using $\ell$-adic techniques in \cite{MR1960121, MR2030375}. For a $p$-adic study, see \cite{partial_Haessig} and \cite{MR4531526}. The partial zeta function of Fermat hypersurfaces was studied in \cite{MR4318328}. See also \cite{MR2385246} for moment zeta functions of toric Calabi-Yau hypersurfaces.

\(T\)-adic exponential sums were introduced by Liu and Wan in \cite{Liu_2009}. They played a critical role in the study of slope variation in Artin--Schreier--Witt towers; see  \cite{Davis_2015, MR3814335, Ren_2019}. For further results on $T$-adic exponential sums and their multiplicative twists, see \cite{Liu_2016, Liu_2014,Niu_2018,Schmidt_2023}.

\section{Integrality via Euler product}\label{sec:euler}

In this section, we give an Euler product expression for $L_\bd(f, T; s)$. As a consequence, $L_\bd(f, T; s)  \in 1 + s \Z_p[[T]][[s]]$. 

Denote by $\G_m^n/\F_q$ the $n$-fold multiplicative torus over $\F_q$. Let $\bar x = (\bar x_1, \ldots, \bar x_n) \in |\G_m^n / \F_q|$ be a closed point. In the following, since most constructions will only depend on the closed point, we will abuse notation and view $\bar x$ as both a closed point and as an element $\bar x = (\bar x_1, \ldots, \bar x_n)$ in the Galois orbit. Define the degree of $\bar x$ by $\deg(\bar x) := [\F_q(\bar x_1, \ldots, \bar x_n) : \F_q]$. Set $\deg(\bar x_i) := [\F_q(\bar x_i): \F_q]$. Then 
\[
\deg(\bar x) = \lcm( \deg(\bar x_1), \ldots, \deg(\bar x_n) ) .
\]
Define the partial degree of $\bar x$ with respect to $\bd$ as
\[
\deg_\bd(\bar x) := \lcm_{1 \leq i \leq n}\!\left( \frac{\deg(\bar x_i)}{\gcd(\deg(\bar x_i), d_i)} \right).
\]

\begin{lemma}\label{lem: degree stuff}
For every $\bar x \in |\G_m^n / \F_q|$,
\begin{enumerate}[label=\textup{(\roman*)}]
\item $\deg_\bd(\bar x) \mid \deg(\bar x)$ and $\deg(\bar x) \mid ( \deg_\bd(\bar x) \cdot d)$.
\item $\bar x = (\bar x_1, \ldots, \bar x_n) \in \prod_{i=1}^n \F_{q^{k d_i}}^\times$ if and only if $\deg_\bd(\bar x) \mid k$.
\end{enumerate}
\end{lemma}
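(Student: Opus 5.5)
The whole argument reduces to one elementary fact about finite fields, applied coordinatewise. Write $e_i := \deg(\bar x_i)$, so $\F_q(\bar x_i) = \F_{q^{e_i}}$. Set $m_i := e_i/\gcd(e_i, d_i)$, so that $\deg_\bd(\bar x) = \lcm_i m_i$. The key observation is
\[
\bar x_i \in \F_{q^{k d_i}} \iff e_i \mid k d_i \iff m_i \mid k,
\]
where the first equivalence is the subfield criterion $\F_{q^{e}} \subseteq \F_{q^{N}} \iff e \mid N$. For the second, write $g_i = \gcd(e_i, d_i)$, $e_i = g_i m_i$ and $d_i = g_i d_i'$ with $\gcd(m_i, d_i') = 1$. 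Then $e_i \mid k d_i$ is equivalent to $m_i \mid k d_i'$, which by coprimality is equivalent to $m_i \mid k$.

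Part (ii) follows at once. We have $\bar x \in \prod_i \F_{q^{k d_i}}^\times$ if and only if $m_i \mid k$ for every $i$, which holds if and only if $\lcm_i m_i = \deg_\bd(\bar x)$ divides $k$. The coordinates are nonzero because $\bar x$ lies on the torus.

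For part (i), the first divisibility holds because each $m_i$ divides $e_i$, and $e_i$ divides $\lcm_j e_j = \deg(\bar x)$. Hence $\lcm_i m_i \mid \deg(\bar x)$. For the second divisibility, note $e_i = g_i m_i$ with $g_i \mid d_i \mid d$ and $m_i \mid \deg_\bd(\bar x)$. So each $e_i$ divides $d \cdot \deg_\bd(\bar x)$, and therefore so does their lcm, $\deg(\bar x)$.

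There is no real obstacle. The only point needing care is the coprimality step $m_i \mid k d_i' \Rightarrow m_i \mid k$, together with the stated identification $\deg(\bar x) = \lcm_i \deg(\bar x_i)$. That identification holds because the compositum of $\F_{q^{e_i}}$ inside a fixed algebraic closure is $\F_{q^{\lcm e_i}}$. The argument is independent of which Galois conjugate represents the closed point, since conjugates have the same degrees $e_i$.
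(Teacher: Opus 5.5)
Your proof is correct and follows the paper's argument: for (ii) the paper also uses $\bar x_i \in \F_{q^{kd_i}} \iff \deg(\bar x_i) \mid k d_i \iff \deg(\bar x_i)/\gcd(\deg(\bar x_i), d_i) \mid k$ and then takes the lcm over $i$. Your coprimality step and your verification of (i) correctly fill in details the paper leaves as ``straightforward.''
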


\begin{proof}
The proof of the first assertion is straightforward. For the second, note that $\bar x \in \prod_{i=1}^n \F_{q^{k d_i}}^\times$ is equivalent to $\deg(\bar x_i) \mid k d_i$ for every $1 \leq i \leq n$, and this is equivalent to $\frac{\deg(\bar x_i)}{\gcd(\deg(\bar x_i), d_i)} \mid k$. This holds for all $i$ if and only if their least common multiple, $\deg_\bd(\bar x)$, divides $k$.
\end{proof}

For $\bar x \in |\G_m^n / \F_q|$, denote its Teichm\"uller lift by $\hat{x} = (\hat{x}_1, \dots, \hat{x}_n) \in \Q_{q^{\deg(\bar x)}}^n$. To ease notation, set
\[
t(\bar x) := \Tr_{\Q_{q^{\deg(\bar x)}}/\Q_p}\!\big( f(\hat{x}) \big) \;\in\; \Z_p \qquad \text{and} \qquad e(\bar x) := \frac{\deg_\bd(\bar x) \cdot d}{\deg(\bar x)} \;\in\; \Z_{\geq 1}.
\]

\begin{lemma}\label{lem:trace_inflation}
Let $\bar x \in |\G_m^n / \F_q|$ and let $k \geq 1$ be an integer such that $\deg_\bd(\bar x) \mid k$. Then $\deg(\bar x) \mid kd$, and
\[
\Tr_{\Q_{q^{kd}}/\Q_p}\!\big( f(\hat{x}) \big) = e(\bar x) \cdot \frac{k}{\deg_\bd(\bar x)} \cdot t(\bar x).
\]
\end{lemma}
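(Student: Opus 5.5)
First, $\deg(\bar x) \mid kd$. By Lemma \ref{lem: degree stuff}(i), $\deg(\bar x) \mid \deg_\bd(\bar x)\cdot d$, and by hypothesis $\deg_\bd(\bar x)\cdot d \mid kd$, so $\deg(\bar x)\mid kd$. Alternatively, Lemma \ref{lem: degree stuff}(ii) gives $\deg(\bar x_i)\mid kd_i \mid kd$ for every $i$, and taking the lcm gives the same conclusion. In particular $\Q_{q^{\deg(\bar x)}} \subseteq \Q_{q^{kd}}$, so $f(\hat x)\in \Q_{q^{\deg(\bar x)}}$ and the trace on the left-hand side makes sense.

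Next, factor the trace through the intermediate field. By transitivity,
\[
\Tr_{\Q_{q^{kd}}/\Q_p} = \Tr_{\Q_{q^{\deg(\bar x)}}/\Q_p}\circ \Tr_{\Q_{q^{kd}}/\Q_{q^{\deg(\bar x)}}}.
\]
Since $f(\hat x)$ lies in the base field $\Q_{q^{\deg(\bar x)}}$, the inner trace is multiplication by the degree $[\Q_{q^{kd}}:\Q_{q^{\deg(\bar x)}}] = kd/\deg(\bar x)$. Hence the left-hand side equals $\frac{kd}{\deg(\bar x)}\, t(\bar x)$.

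Finally, the factor matches the claimed one:
\[
\frac{kd}{\deg(\bar x)} = \frac{\deg_\bd(\bar x)\, d}{\deg(\bar x)}\cdot\frac{k}{\deg_\bd(\bar x)} = e(\bar x)\cdot \frac{k}{\deg_\bd(\bar x)} .
\]
Both factors are integers: $k/\deg_\bd(\bar x)$ by hypothesis, and $e(\bar x)$ by Lemma \ref{lem: degree stuff}(i).

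There is no real obstacle. The only point needing care is that the Teichmüller lift $\hat x$ lies in $\Q_{q^{\deg(\bar x)}}$, which is where $f(\hat x)$ must live for the trace to factor this way. This holds because each $\hat x_i$ is a $(q^{\deg(\bar x_i)}-1)$-th root of unity, and $\deg(\bar x_i)\mid\deg(\bar x)$.
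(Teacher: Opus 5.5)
Your proposal is correct and follows essentially the same argument as the paper: you obtain $\deg(\bar x)\mid \deg_\bd(\bar x)\, d \mid kd$ from Lemma~\ref{lem: degree stuff}(i), then factor the trace through $\Q_{q^{\deg(\bar x)}}$ to get the factor $kd/\deg(\bar x)$, and finally rewrite that factor as $e(\bar x)\cdot k/\deg_\bd(\bar x)$. Your alternative route via part (ii) and your remark that $\hat x$ lies in $\Q_{q^{\deg(\bar x)}}$ are both valid extras.
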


\begin{proof}
By Lemma~\ref{lem: degree stuff}(i) and the hypothesis $\deg_\bd(\bar x) \mid k$, we have $\deg(\bar x) \mid \deg_\bd(\bar x) d \mid kd$. Thus, the field extension $\Q_{q^{kd}} / \Q_{q^{\deg(\bar x)}}$ is well-defined. Since $f(\hat{x}) \in \Q_{q^{\deg(\bar x)}}$,
\[
\Tr_{\Q_{q^{kd}}/\Q_p}\!\big(f(\hat{x})\big) = \Tr_{\Q_{q^{\deg(\bar x)}}/\Q_p}\!\left( \Tr_{\Q_{q^{kd}}/\Q_{q^{\deg(\bar x)}}}\!\big(f(\hat{x})\big) \right) = \frac{kd}{\deg(\bar x)} \cdot \Tr_{\Q_{q^{\deg(\bar x)}}/\Q_p}\!\big(f(\hat{x})\big).
\]
The lemma follows.
\end{proof}

\begin{theorem}\label{thm:euler}
We have the Euler product:
\[
L_\bd(f, T; s) = \prod_{\bar x \in |\G_m^n / \F_q|} \left( 1 - (1 + T)^{e(\bar x) t(\bar x)} \, s^{\deg_\bd(\bar x)} \right)^{- \deg(\bar x)/\deg_\bd(\bar x)}.
\]
Hence, $L_\bd(f, T; s) \;\in\; 1 + s\Z_p[[T]][[s]]$.
\end{theorem}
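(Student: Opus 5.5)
The plan is to take the logarithm of the proposed Euler product, expand it as a power series in $s$, and check that the coefficient of $s^k/k$ equals $S_\bd(f,k;T)$.

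\textbf{Step 1: regroup the sum by closed points.} Teichm\"uller lifting identifies $\mu_{q^{kd_i}-1}$ with $\F_{q^{kd_i}}^\times$, so $S_\bd(f,k;T)$ is a sum over points $\bar x \in \prod_i \F_{q^{kd_i}}^\times$. We group these points by Galois orbit, i.e.\ by closed point $\bar x \in |\G_m^n/\F_q|$, where each orbit has $\deg(\bar x)$ elements. By Lemma~\ref{lem: degree stuff}(ii), a closed point contributes exactly when $\deg_\bd(\bar x) \mid k$.

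\textbf{Step 2: show all conjugates give the same term.} Conjugates under $\Gal(\Q_{q^{kd}}/\Q_p)$ have the same trace. This holds because the Frobenius conjugates of $\hat x$ are the Teichm\"uller lifts of the conjugates of $\bar x$, and the coefficients of $f$ are Teichm\"uller, so $f(\sigma \hat x) = \sigma f(\hat x)$. Here one should note that $\mathrm{Frob}_q$ acts on the orbit, and the $\Q_p$-trace is invariant under all of $\Gal(\Q_{q^{kd}}/\Q_p)$. Lemma~\ref{lem:trace_inflation} then gives
\[
S_\bd(f,k;T) = \sum_{\bar x:\ \deg_\bd(\bar x)\mid k} \deg(\bar x)\,(1+T)^{e(\bar x) t(\bar x) k/\deg_\bd(\bar x)} .
\]

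\textbf{Step 3: compare with the logarithm of the product.} Writing $D=\deg_\bd(\bar x)$, the factor attached to $\bar x$ has logarithm
\[
\frac{\deg(\bar x)}{D}\sum_{j\ge1} (1+T)^{j e(\bar x)t(\bar x)} \frac{s^{jD}}{j} = \sum_{j\ge1} \deg(\bar x)(1+T)^{j e(\bar x) t(\bar x)}\frac{s^{jD}}{jD}.
\]
Setting $k=jD$ recovers exactly the terms of Step 2. Summing over $\bar x$ is legitimate: there are finitely many closed points with $\deg_\bd(\bar x)\le K$, since $\deg(\bar x)\le K d$, so each $s$-adic coefficient is a finite sum. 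Exponentiating then gives the product formula.

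\textbf{Step 4: integrality.} Each factor $(1-(1+T)^{m}s^D)^{-r}$, with integer $m=e(\bar x)t(\bar x)\in\Z_p$ and positive integer exponent $r=\deg(\bar x)/D$ by Lemma~\ref{lem: degree stuff}(i), lies in $1+s\Z_p[[T]][[s]]$. This uses that $(1+T)^m=\sum_i \binom{m}{i}T^i\in\Z_p[[T]]$ for $m\in\Z_p$. The product converges $s$-adically by the same finiteness argument as in Step 3, so $L_\bd(f,T;s)\in 1+s\Z_p[[T]][[s]]$.

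The main obstacle is the bookkeeping in Step 2: confirming that the orbit has exactly $\deg(\bar x)$ points inside $\prod\F_{q^{kd_i}}^\times$, and that the trace is constant on the orbit. Both facts follow from Teichm\"uller lifting commuting with Frobenius.
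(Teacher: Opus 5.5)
Your proposal is correct and follows the paper's proof. You group the points by closed point via Lemma~\ref{lem: degree stuff}(ii), evaluate the trace with Lemma~\ref{lem:trace_inflation}, and reindex $k = j\deg_\bd(\bar x)$ in the logarithm. Your extra remarks fill in details the paper leaves implicit: each $s$-coefficient is a finite sum because $\deg(\bar x) \mid \deg_\bd(\bar x)\, d$, the exponent $\deg(\bar x)/\deg_\bd(\bar x)$ is an integer, and $(1+T)^m$ is integral for $m \in \Z_p$. One small correction: the identity $f(\sigma\hat x)=\sigma f(\hat x)$ uses only that $\sigma=\mathrm{Frob}_q$ fixes $\Z_q$, not that the coefficients are Teichm\"uller.
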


\begin{proof}
From Lemmas \ref{lem: degree stuff} and \ref{lem:trace_inflation}, we may sum over closed points:
\[
S_\bd(f, k; T) = \sum_{\substack{\bar x \in |\G_m^n / \F_q| \\ \deg_\bd(\bar x) \mid k}} \deg(\bar x) \cdot (1 + T)^{e(\bar x) t(\bar x) \cdot \frac{k}{\deg_\bd(\bar x)}}.
\]
Then
\begin{align*}
\log L_\bd(f, T; s) &= \sum_{k=1}^\infty S_\bd(f, k; T) \frac{s^k}{k} \\
&= \sum_{\bar x \in |\G_m^n / \F_q|} \deg(\bar x) \sum_{j=1}^\infty \frac{1}{j \cdot \deg_\bd(\bar x)} \left( (1 + T)^{e(\bar x) t(\bar x)} s^{\deg_\bd(\bar x)} \right)^j,
\end{align*}
where we have reindexed the inner sum by setting $k = j \cdot \deg_\bd(\bar x)$. Thus,
\begin{align*}
\log L_\bd(f, T; s) &= \sum_{\bar x \in |\G_m^n / \F_q|} \frac{\deg(\bar x)}{\deg_\bd(\bar x)} \sum_{j=1}^\infty \frac{1}{j} \left( (1 + T)^{e(\bar x) t(\bar x)} s^{\deg_\bd(\bar x)} \right)^j \\
&= -\sum_{\bar x \in |\G_m^n / \F_q|} \frac{\deg(\bar x)}{\deg_\bd(\bar x)} \log\!\left( 1 - (1 + T)^{e(\bar x) t(\bar x)} s^{\deg_\bd(\bar x)} \right).
\end{align*}
The result follows.
\end{proof}

\section{Partial $T$-adic $L$-functions and $C$-functions}\label{sec:Tadic}

This section closely follows the method developed in \cite{partial_Haessig}; we refer to that paper for motivation for the following construction. Set $N := \sum_{i=1}^n d_i$. We introduce $N$ variables $y_{i, j}$ where $1 \leq i \leq n$ and $j \in \Z/d_i\Z$. For $v \in \Z^N$, we use the multi-index notation $y^v := \prod_{i, j} y_{i, j}^{v_{i, j}}$. Define the shift operator $\sigma$ on the variables $y_{i, j}$ by
\[
\sigma(y_{i, j \bmod d_i}) := y_{i, (j+1) \bmod d_i}.
\]
On functions $\xi(y)$, define the action of $\sigma$ by $(\sigma \xi)(y) := \xi(\sigma y)$. Denote by $P$ the $N \times N$ permutation matrix acting on $\Z^N$ by $(Pv)_{i, j} := v_{i, (j-1) \bmod d_i}$. Then $\sigma(y^v) = y^{Pv}$ and $\sigma$ has order $d$.

Define the \emph{unfolded} Laurent polynomial
\[
G(y) := \sum_{j = 0}^{d-1} f\!\left( y_{1, j \bmod d_1}, \dots, y_{n, j \bmod d_n} \right).
\]
Let $\Delta = \Delta(G) \subset \R^N$ denote the Newton polyhedron of $G$ at infinity, and set
\[
C(\Delta) := \R_{\geq 0} \Delta, \qquad M := C(\Delta) \cap \Z^N.
\]
Let $w: M \to \Q_{\geq 0}$ be the associated polyhedral weight function, and let $D$ be the smallest positive integer with $w(M) \subseteq \tfrac{1}{D} \Z$. By \cite[Lemma 2.2]{partial_Haessig}, $G$ is $\sigma$-invariant; equivalently, $\supp(G) \subset \Z^N$ is stable under $P$. Thus $P\Delta = \Delta$, and the weight function satisfies $w(Pu) = w(u)$ for all $u \in M$ (see \cite[Lemma 2.3]{partial_Haessig}).

Denote by $E(t) := \exp\!\big( \sum_{r \geq 0} t^{p^r}/p^r \big) \in 1 + t\Z_p[[t]]$ the Artin--Hasse exponential. Define $\pi$ by the relation $E(\pi) = 1 + T$,
and note that $\pi = T + \cdots  \in T \Z_p[[T]]$; thus $\ord_T \pi = 1$. Set $\OT := \Z_q\big[[\pi^{1/D}]\big]$. Observe that $\Z_p[[T]] \subset \OT$ since $T = E(\pi) - 1 \in \pi \Z_p[[\pi]]$.

Define the $T$-adic Banach algebra $B$ attached to $\Delta$ with orthonormal basis $\{\pi^{w(u)} y^u\}_{u \in M}$:
\[
B := \left\{ \sum_{u \in M} A_u \, \pi^{w(u)} y^u \;\middle|\; A_u \in \OT, \;\; \ord_T(A_u) \to \infty \text{ as } w(u) \to \infty \right\}.
\]
Define 
\[
F(y) := \prod_{j=0}^{d-1} \prod_{u \in \supp(f)} E(\pi c_u \prod_{i=1}^n y_{i, j \bmod d_i}^{u_i}).
\]
Choose $\tau \in \Gal(\Q_q/\Q_p)$ such that on Teichm\"uller units, $\tau(a) = a^p$. Set
\[
F_a(y) := F(y) F^{\tau}(y^p) \cdots F^{\tau^{a-1}}(y^{p^{a-1}}).
\]
Define $\psi_p$ by its action on monomials: $\psi_p(y^u) = y^{u/p}$ if every coordinate of $u$ is divisible by $p$, and $0$ otherwise. Set $\psi_q := \psi_p^a$. Define the Frobenius operator
\[
\alpha_a := \psi_q \circ F_a(y).
\]

\begin{lemma}\label{lem:weight_estimate}
Write $F(y) = \sum_{u \in M} B_u y^u$. For all $u \in M$,
\[
\ord_T(B_u) \geq w(u).
\]
\end{lemma}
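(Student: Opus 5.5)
We expand $F$ as a product of Artin--Hasse factors and use two facts: the Artin--Hasse series has integral coefficients, and the weight function is subadditive with value at most $1$ on $\supp(G)$.

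\textbf{Expanding $F$.} Write $E(t)=\sum_{m\ge 0}\lambda_m t^m$, where $\lambda_m\in\Z_p$ because $E(t)\in 1+t\Z_p[[t]]$. For $0\le j\le d-1$ and $u\in\supp(f)$, let $v^{(j,u)}\in\Z^N$ be the exponent vector of the monomial $\prod_{i} y_{i,j \bmod d_i}^{u_i}$. Its $(i,j\bmod d_i)$-coordinate is $u_i$ and its other coordinates are $0$. Then
\[
F(y)=\prod_{j=0}^{d-1}\prod_{u\in\supp(f)}\sum_{m\ge0}\lambda_m c_u^m\pi^m y^{m v^{(j,u)}}.
\]
Expanding the product, the coefficient $B_v$ is a sum of terms
\[
\prod_{j,u}\lambda_{m_{j,u}}c_u^{m_{j,u}}\,\pi^{\sum m_{j,u}},
\]
indexed by tuples $(m_{j,u})$ of nonnegative integers with $\sum_{j,u} m_{j,u}v^{(j,u)}=v$.

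\textbf{Convergence.} For fixed $v$ this sum may be infinite. It still converges $T$-adically: there are only finitely many tuples with $\sum m_{j,u}\le K$, and every term with $\sum m_{j,u}>K$ has $\ord_T>K$ since $\ord_T\pi=1$. Each coefficient $\lambda_m c_u^m$ lies in $\Z_q$. Therefore
\[
\ord_T(B_v)\ \ge\ \inf\Big\{\textstyle\sum_{j,u} m_{j,u} \;:\; \sum_{j,u} m_{j,u} v^{(j,u)}=v\Big\}.
\]
Every such $v$ is a nonnegative integer combination of points of $\supp(G)$, so it lies in $C(\Delta)\cap\Z^N=M$. This is consistent with writing $F=\sum_{u\in M}B_u y^u$.

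\textbf{Weight bound.} It remains to show $w(v)\le\sum m_{j,u}$ whenever $v=\sum m_{j,u}v^{(j,u)}$. The main point is that each $v^{(j,u)}$ is an exponent of $G$. Indeed, $G$ is by definition the sum over $j$ of $f(y_{1,j\bmod d_1},\dots)$, and $v^{(j,u)}\in\supp(G)$. No cancellation can occur: distinct pairs $(j\bmod d,u)$ give distinct exponents, because the exponent vector determines which variables $y_{i,j\bmod d_i}$ occur and hence recovers $u$ and $j$ up to the relevant moduli. In any case $v^{(j,u)}$ is a vertex-generating point of $\Delta$. Since $\Delta$ is the convex hull of $\{0\}\cup\supp(G)$, we get $w(v^{(j,u)})\le 1$.

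The polyhedral weight function is positively homogeneous and subadditive on $C(\Delta)$. This is standard: $w$ is the minimum $c\ge0$ with $v\in c\Delta$, and convexity of $\Delta$ gives $w(a+b)\le w(a)+w(b)$. Hence
\[
w(v)\le\sum m_{j,u}\,w(v^{(j,u)})\le\sum m_{j,u},
\]
and so $\ord_T(B_v)\ge w(v)$.

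The only potential obstacle is justifying that $v^{(j,u)}$ lies in $\Delta$ even if some monomials of $G$ cancel or coincide. Coincidences are harmless: for the bound we only need $v^{(j,u)}\in\Delta$, and distinct pairs $(j\bmod d,u)$ give distinct monomials anyway. So cancellation cannot remove a point from $\supp(G)$, and the containment $\{v^{(j,u)}\}\subseteq\supp(G)\subset\Delta$ holds.
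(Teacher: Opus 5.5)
Your proof is correct and follows essentially the paper's argument. You expand $F$ into Artin--Hasse factors, bound $\ord_T$ of each contributing term below by the total degree $\sum m_{j,u}$, and finish with $w\le 1$ on $\supp(G)$ and subadditivity of $w$; your convergence remark is a small addition the paper leaves implicit.

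One side claim is false, though the conclusion it supports still holds. Distinct pairs $(j \bmod d, u)$ need not give distinct exponents: for example $n=2$, $\bd=(1,2)$, $f=x_1$ gives $G=2y_{1,0}$. No cancellation occurs anyway, because $v^{(j,u)}$ determines $u$ (the block sums recover each $u_i$). Hence the coefficient of $y^{v^{(j,u)}}$ in $G$ is a positive integer multiple of the unit $c_u$, which is nonzero in characteristic $0$.
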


\begin{proof}
The usual argument shows the coefficients of $E(\pi t) = \sum_{i=0}^\infty \lambda_i t^i$ satisfy $\ord_T(\lambda_i) \geq i$. Write $F(y)$ in the form $F(y) = \prod_{k=1}^m E(\pi a_k y^{v^{(k)}})$ where $a_k$ is a Teichm\"uller unit, and $v^{(k)} \in \supp(G)$. Note, $w(v^{(k)}) \leq 1$.

Writing $F(y) = \sum_{u \in M} B_u y^u$, then the coefficient $B_u$ is a sum of terms of the form $\lambda_{i_1} \cdots \lambda_{i_m} a_1^{i_1} \cdots a_m^{i_m}$ where $i_1 {v}^{(1)} + \dots + i_m {v}^{(m)} = u$. Now,
\[
\ord_T(\lambda_{i_1} \cdots \lambda_{i_m}) \geq \sum_{k=1}^m i_k \;\geq\; \sum_{k=1}^m i_k w(v^{(k)}) \;\geq\; w\!\left( \sum_{k=1}^m i_k v^{(k)} \right) = w(u).
\]
Consequently, $\ord_T(B_u) \geq w(u)$ as desired. 
\end{proof}

\begin{theorem}\label{thm:complete_continuity}
The Frobenius operator $\alpha_a$ is a well-defined, completely continuous endomorphism of $B$.
\end{theorem}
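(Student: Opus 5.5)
Following the standard Dwork/Adolphson--Sperber argument, adapted to the $T$-adic setting as in Liu--Wan, I would first reduce to estimating the matrix of $\alpha_a$ in the orthonormal basis $\{\pi^{w(u)}y^u\}_{u\in M}$. Write $F_a(y)=\sum_{u\in M}B^{(a)}_u y^u$. Since $w$ is positively homogeneous ($w(pu)=p\,w(u)$) and $\tau$ fixes $\pi$ and permutes Teichm\"uller units, Lemma~\ref{lem:weight_estimate} applied to each factor $F^{\tau^i}(y^{p^i})$ gives that its coefficient of $y^{p^i v}$ has $\ord_T\ge w(v)=p^{-i}w(p^iv)$. Combined with the subadditivity $w(u+v)\le w(u)+w(v)$ on $M$, this yields
\[
\ord_T B^{(a)}_u \;\ge\; \frac{w(u)}{p^{a-1}} \qquad (u\in M).
\]
(Only the crude bound $\ord_T B^{(a)}_u\ge w(u)/p^{a-1}$ is needed; the factor $F(y)$ alone gives $\ge w(u)$, but the product mixes scales.) Here I also use that $\supp(G)\subset M$, so $F_a$ has support in $M$ and multiplication preserves $M$; and $\psi_q$ maps $M$-monomials to $M$-monomials because $C(\Delta)$ is a cone: if $u\in M$ and $q\mid u$, then $u/q\in C(\Delta)\cap\Z^N$.

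Next, compute the action on a basis element:
\[
\alpha_a(\pi^{w(v)}y^v)=\sum_{u\in M}B^{(a)}_{qu-v}\,\pi^{w(v)}y^u=\sum_{u\in M}\Big(B^{(a)}_{qu-v}\,\pi^{w(v)-w(u)}\Big)\pi^{w(u)}y^u,
\]
with the convention $B^{(a)}_{z}=0$ if $z\notin M$. The matrix entry $A_{u,v}=B^{(a)}_{qu-v}\pi^{w(v)-w(u)}$ then satisfies
\[
\ord_T A_{u,v}\ \ge\ \frac{w(qu-v)}{p^{a-1}}+w(v)-w(u)\ \ge\ \frac{q\,w(u)-w(v)}{p^{a-1}}+w(v)-w(u),
\]
where I use $w(qu)\le w(qu-v)+w(v)$. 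This simplifies to $(p-1)w(u)+\big(1-p^{1-a}\big)w(v)\ge (p-1)w(u)$. The entries lie in $\OT$ because all exponents are in $\tfrac1D\Z_{\ge0}$ and the bound is nonnegative, and $\ord_T A_{u,v}\to\infty$ as $w(u)\to\infty$ uniformly in $v$. These are exactly the conditions that make $\alpha_a$ a well-defined continuous map $B\to B$: each image coefficient is a convergent sum, since $\ord_T$ of the coefficients of an element of $B$ tends to infinity, and its $T$-order tends to infinity as $w(u)\to\infty$. The uniform decay in rows makes $\alpha_a$ a $T$-adic limit of finite-rank operators, obtained by truncating to $w(u)\le c$, since $\{u: w(u)\le c\}$ is finite. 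Hence $\alpha_a$ is completely continuous.

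The main obstacle is the first step: establishing the coefficient bound for $F_a$ carefully, including that all products stay supported in $M$ with nonnegative fractional $\pi$-exponents in $\tfrac1D\Z$, and the homogeneity and subadditivity properties of $w$. If the crude bound $w(u)/p^{a-1}$ proves awkward, the alternative is to factor $\alpha_a$ as a composition of $a$ semilinear operators $\alpha_1=\psi_p\circ F$, twisted by $\tau$, each of which satisfies $\ord_T\ge (p-1)w(u)$ directly by Lemma~\ref{lem:weight_estimate} applied with $q$ replaced by $p$. Complete continuity is then preserved under composition.
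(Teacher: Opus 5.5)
Your proof is correct, but it is organized differently from the paper's.

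\textbf{The paper's route.} The paper never estimates the coefficients of $F_a$. It writes $\alpha_a=\alpha^a$ with $\alpha=\tau^{-1}\circ\psi_p\circ F(y)$ and computes the matrix of the single semilinear step $\alpha$ in the basis $\{\pi^{w(u)}y^u\}$. It then gets $\ord_T(A_{u,r})\ge (p-1)w(r)$ directly from Lemma~\ref{lem:weight_estimate} and subadditivity of $w$. This is exactly the alternative you mention at the end.

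\textbf{Your route.} You first prove the Adolphson--Sperber-type bound $\ord_T B^{(a)}_u\ge w(u)/p^{a-1}$ for the coefficients of $F_a$. You then estimate the $\OT$-linear operator $\psi_q\circ F_a$ in one step, obtaining $(p-1)w(u)+(1-p^{1-a})w(v)$.

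\textbf{What yours buys.} You work with a genuinely linear operator, so you avoid two facts the paper uses without stating them: that $\tau^{-a}$ is trivial on $\Z_q$ (so $\alpha^a=\psi_q\circ F_a$), and that complete continuity passes from the semilinear $\alpha$ to its $a$-th power. You also make the criterion explicit, which the paper leaves as ``the result follows'': $\ord_T A_{u,v}$ tends to infinity in the image index uniformly in $v$, so truncating to the finitely many $u$ with $w(u)\le c$ gives finite-rank approximations.

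\textbf{What the paper's buys.} The paper's route is shorter, and it is the one reused later. The entrywise bound $(p-1)w$ for $\alpha$ feeds Dwork's argument in Theorem~\ref{thm:native_orbit_noh} and produces the slope factor $a(p-1)$. Your one-step estimate for $\alpha_a$, summed over a permutation in the determinant expansion, gives only the factor $p-p^{1-a}$, which is strictly smaller than $a(p-1)$ once $a\ge 2$.

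For the complete continuity statement itself, either estimate suffices.
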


\begin{proof}
Observe that $\alpha_a = \alpha^a$, where $\alpha := \tau^{-1} \circ \psi_p \circ F(y)$. Thus, it suffices to prove that $\alpha$ is completely continuous on $B$. Now, $\alpha$ acts on the orthonormal basis $\{ \pi^{w(u)} y^u \}_{u \in M}$ of $B$ by: 
\begin{align*}
\alpha\!\left(\pi^{w(u)} y^u\right) &= \tau^{-1} \circ \psi_p \!\left( \sum_{v \in M} B_v \pi^{w(u)} y^{u+v} \right) \\
&= \sum_{r \in M} \tau^{-1}(B_{pr-u}) \pi^{w(u)} y^r \\
&= \sum_{r \in M} A_{u,r} \pi^{w(r)} y^r,
\end{align*}
where $A_{u,r} := \tau^{-1}(B_{pr-u}) \pi^{w(u) - w(r)}$. By Lemma~\ref{lem:weight_estimate},
\begin{align*}
\ord_T(A_{u,r}) &= \ord_T(B_{pr-u}) + w(u) - w(r) \\
&\geq w(pr-u) + w(u) - w(r)\\
&\geq w(pr) - w(u) + w(u) - w(r) \\
&= (p-1)w(r).
\end{align*}
The result follows.
\end{proof}

\begin{lemma}\label{lem:commutation}
As operators on $B$, $\sigma \circ \alpha_a = \alpha_a \circ \sigma$.
\end{lemma}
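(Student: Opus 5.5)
We prove $\sigma\circ\alpha_a=\alpha_a\circ\sigma$ by checking three facts: $\sigma$ is a well-defined isometric automorphism of $B$, it commutes with $\psi_q$, and it fixes $F_a(y)$. Then for any $\xi\in B$,
\[
\sigma(\alpha_a \xi) = \sigma\big(\psi_q(F_a \xi)\big) = \psi_q\big(\sigma(F_a\xi)\big) = \psi_q\big(\sigma(F_a)\,\sigma(\xi)\big) = \psi_q\big(F_a\,\sigma(\xi)\big) = \alpha_a(\sigma\xi).
\]
The middle equality uses that $\sigma$ is a ring homomorphism on formal Laurent series, being a substitution of variables.

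\textbf{Step 1: $\sigma$ acts on $B$.} Since $\sigma(y^u)=y^{Pu}$, $P\Delta=\Delta$, and $P$ is a permutation of $\Z^N$, $P$ restricts to a bijection of $M$. Moreover $w(Pu)=w(u)$ by \cite[Lemma 2.3]{partial_Haessig}. So $\sigma$ sends the orthonormal basis $\{\pi^{w(u)}y^u\}$ to itself by a permutation. It is therefore an isometric automorphism of $B$, preserving the condition $\ord_T(A_u)\to\infty$ as $w(u)\to\infty$.

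\textbf{Step 2: $\sigma$ commutes with $\psi_p$.} We check this on monomials. The coordinates of $Pu$ are a permutation of those of $u$, so $p$ divides every coordinate of $Pu$ exactly when it divides every coordinate of $u$. In that case $\psi_p(y^{Pu}) = y^{Pu/p} = y^{P(u/p)} = \sigma\psi_p(y^u)$, and otherwise both sides are $0$. By linearity and continuity, $\sigma$ commutes with $\psi_p$ on the relevant series, hence with $\psi_q=\psi_p^a$.

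\textbf{Step 3: $\sigma$ fixes $F_a$.} This is the substantive step. Write $F(y)=\prod_{j=0}^{d-1}H_j(y)$ with
\[
H_j(y):=\prod_{u\in\supp(f)}E\Big(\pi c_u\prod_i y_{i,j\bmod d_i}^{u_i}\Big).
\]
Applying $\sigma$ replaces each $y_{i,j\bmod d_i}$ by $y_{i,(j+1)\bmod d_i}$, so $\sigma H_j=H_{j+1}$. Here $H_d=H_0$, because $d_i\mid d$ for every $i$ gives $d\equiv 0 \bmod d_i$. Hence $\sigma$ permutes the factors of $F$ cyclically and $\sigma F=F$. The operation $\xi\mapsto\xi^{\tau^k}$ acts only on coefficients, and $\xi(y)\mapsto\xi(y^{p^k})$ commutes with the variable permutation. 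Therefore $\sigma\big(F^{\tau^k}(y^{p^k})\big)=(\sigma F)^{\tau^k}(y^{p^k})=F^{\tau^k}(y^{p^k})$, and taking the product over $k$ gives $\sigma F_a=F_a$.

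The only genuine care needed is in Step 3, specifically that the index wraparound is consistent: $j\mapsto j+1$ modulo $d$ must be compatible with reduction modulo each $d_i$, and this is exactly what $d_i\mid d$ guarantees. The other steps are formal, given the $P$-invariance of $w$ and $M$ recorded before Lemma \ref{lem:weight_estimate}. Products such as $F_a\xi$ may lie in a larger space of formal series. However, Theorem \ref{thm:complete_continuity} shows $\alpha_a$ maps $B$ into $B$, and the identity above holds coefficientwise in the ambient ring of formal series, where $\sigma$ is defined by the same substitution.
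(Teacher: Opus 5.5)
Your proof is correct and follows the paper's route. The paper's one-line argument is that $F$, and hence $F_a$, is $\sigma$-invariant (which it attributes to the $\sigma$-invariance of $G$), so $\sigma$ commutes with $\alpha_a=\psi_q\circ F_a$. You fill in what the paper leaves implicit: $\sigma$ permutes the orthonormal basis of $B$, commutes with $\psi_p$, and fixes $F$ because it cyclically permutes the factors $H_j$ (using $d_i \mid d$), which is the more direct justification since $F$ is built from the unfolded terms rather than from $G$ itself.
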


\begin{proof}
Since $G$ is $\sigma$-invariant, $F$ and $F_a$ are $\sigma$-invariant. The result follows.
\end{proof}

\begin{lemma}\label{lem:det_identity}
Let $P$ be the permutation matrix associated with the shift operator $\sigma$. For any integer $k \geq 1$, we have
\[
\det(q^k I - P) = \prod_{i=1}^n (q^{k d_i} - 1).
\]
\end{lemma}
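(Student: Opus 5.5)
The plan is to exploit the block structure of $P$. By definition $(Pv)_{i,j} = v_{i,(j-1) \bmod d_i}$, so $P$ preserves each of the $n$ coordinate blocks $\{(i,j) : j \in \Z/d_i\Z\}$. With a suitable ordering of coordinates, $P$ is therefore block diagonal, $P = \operatorname{diag}(P_1, \dots, P_n)$. Here $P_i$ is the $d_i \times d_i$ cyclic permutation matrix of the shift $j \mapsto j+1$ on $\Z/d_i\Z$. Since reordering coordinates conjugates $P$ by a permutation matrix, the determinant is unchanged. It then suffices to prove $\det(q^k I_{d_i} - P_i) = q^{kd_i} - 1$ for each $i$ and multiply the results.

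For a single cyclic block I would compute the characteristic polynomial of the $d \times d$ cyclic shift $C$, and I would give two routes. First, $C^d = I$, and the vector $v = e_0$ is cyclic: $e_0, Ce_0, \dots, C^{d-1}e_0$ are the standard basis vectors. So the minimal polynomial has degree $d$ and divides $x^d - 1$. It therefore equals $x^d - 1$, and so does the characteristic polynomial $\det(xI - C)$. Alternatively, one can expand $\det(xI - C)$ along the first row (or column): the matrix $xI - C$ has $x$ on the diagonal and $-1$ in the cyclic off-diagonal positions. The identity permutation contributes $x^d$. The only other nonzero term comes from the $d$-cycle, contributing $\operatorname{sgn}(d\text{-cycle})\cdot(-1)^d = (-1)^{d-1}(-1)^d = -1$. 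Either way $\det(xI - C) = x^d - 1$.

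Substituting $x = q^k$ into each block gives $\det(q^k I - P_i) = q^{kd_i} - 1$, and the product over $i$ yields the lemma. The argument is routine. The only point needing care is the sign of the $d$-cycle term in the determinant expansion, which the minimal-polynomial argument avoids. The case $d_i = 1$ is consistent, since then $P_i = (1)$ and the block contributes $q^k - 1$.
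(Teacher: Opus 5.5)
Your proof is correct. Decomposing $P = \operatorname{diag}(P_1,\dots,P_n)$ into cyclic shifts, each with characteristic polynomial $x^{d_i}-1$, gives the identity directly. The paper gives no argument here and just cites Lemma 4.1 of \cite{partial_Haessig}, but your block decomposition is exactly the one the paper uses later to prove $\det(q^kI - P^{br}) = \det(q^kI - P^r)$, so this is essentially the same route.
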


\begin{proof}
This is \cite[Lemma 4.1]{partial_Haessig}.
\end{proof}

\begin{lemma}\label{lem:t_adic_splitting}
For any Teichm\"uller unit $z \in \Z_q$,
\[
\prod_{j=0}^{a-1} E(\pi z^{p^j}) = (1 + T)^{\Tr_{\Q_q/\Q_p}(z)}.
\]
\end{lemma}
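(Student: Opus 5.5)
We prove the identity by expanding the Artin--Hasse exponential and exploiting the Frobenius-invariance of the resulting exponent. Since $E(t) = \exp\big(\sum_{r \ge 0} t^{p^r}/p^r\big)$, we can write
\[
\prod_{j=0}^{a-1} E(\pi z^{p^j}) = \exp\Big( \sum_{r \ge 0} \frac{\pi^{p^r}}{p^r} \sum_{j=0}^{a-1} z^{p^{j+r}} \Big).
\]
The key observation is that, because $z$ is a Teichm\"uller unit in $\Z_q$, we have $z^{p^a} = z$. Hence $j \mapsto z^{p^{j+r}}$ is periodic of period $a$, and for every $r$,
\[
\sum_{j=0}^{a-1} z^{p^{j+r}} = \sum_{j=0}^{a-1} z^{p^j} = \sum_{j=0}^{a-1}\tau^j(z) = \Tr_{\Q_q/\Q_p}(z) =: t \in \Z_p.
\]
Here we use that $\tau$ generates $\Gal(\Q_q/\Q_p)$ and acts on Teichm\"uller units by $p$-th powers. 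The exponent therefore collapses to $t \sum_{r\ge0}\pi^{p^r}/p^r = t\log E(\pi)$, and we obtain $\exp(t \log E(\pi)) = E(\pi)^t = (1+T)^t$ by the defining relation $E(\pi)=1+T$.

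The main care point is justifying these formal manipulations, since the exponent $\sum_r \pi^{p^r}/p^r$ has denominators and does not lie in $\Z_p[[T]]$. First, all identities take place in $\Q_q[[\pi]]$. There $\pi$ is a formal variable, because $T\mapsto \pi$ is an invertible change of variables with $\ord_T\pi=1$, so $\Z_q[[T]] = \Z_q[[\pi]]$. Second, $\exp$ and $\log$ are mutually inverse between $\pi\Q_q[[\pi]]$ and $1+\pi\Q_q[[\pi]]$, and $\exp(A+B)=\exp(A)\exp(B)$ holds there. Third, for $t\in\Z_p$ the power $(1+T)^t := \sum_k \binom{t}{k}T^k$ coincides with $\exp(t\log(1+T))$ in $\Q_p[[T]]$. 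This last fact holds for $t\in\Z$ and extends to all $t \in \Z_p$ by $p$-adic continuity in $t$, coefficientwise, since each coefficient is a polynomial in $t$.

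Combining these three facts gives the equality as elements of $\Q_q[[T]]$. The left side lies in $\Z_q[[T]]$ and the right side in $\Z_p[[T]]$, so the identity is exactly the claimed one.
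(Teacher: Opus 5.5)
Your proof is correct and follows the paper's argument: expand the Artin--Hasse exponential, use $z^{p^a}=z$ to collapse the inner sum to $\Tr_{\Q_q/\Q_p}(z)$, and recognize $\sum_{r\ge0} \pi^{p^r}/p^r = \log E(\pi) = \log(1+T)$. Your extra justification of the formal manipulations in $\Q_q[[\pi]]$ fills in details the paper leaves implicit; in particular, the identity $(1+T)^t = \exp(t\log(1+T))$ for $t \in \Z_p$ holds coefficientwise as a polynomial identity in $t$, so you do not even need the continuity argument.
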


\begin{proof}
Taking the log of the left-hand side:
$$ \log \prod_{j=0}^{a-1} E(\pi z^{p^j}) = \sum_{j=0}^{a-1} \sum_{i=0}^\infty \frac{\pi^{p^i} z^{p^{i+j}}}{p^i} = \sum_{i=0}^\infty \frac{\pi^{p^i}}{p^i} \left( \sum_{j=0}^{a-1} z^{p^{i+j}} \right). $$
Since $z$ is a Teichmüller unit in $\Z_q$, it satisfies $z^{p^a} = z$. Thus, for any fixed $i \geq 0$, as $j$ varies from $0$ to $a-1$, the exponent $i+j \pmod a$ is a permutation of the integers $0$ to $a-1$. Hence, $\sum_{j=0}^{a-1} z^{p^{i+j}} = \Tr_{\Q_q/\Q_p}(z)$. Thus,
$$ \Tr_{\Q_q/\Q_p}(z) \sum_{i=0}^\infty \frac{\pi^{p^i}}{p^i} = \Tr_{\Q_q/\Q_p}(z) \log E(\pi) = \Tr_{\Q_q/\Q_p}(z) \log(1+T), $$
as desired.
\end{proof}

Define
\[
W_k := \big\{ y \in (\overline{\bb F}_p^\times)^N \;\big|\; \sigma^{-1}(y^{q^k}) = y \big\}.
\]
Set $F_a^{(k)}(y) := \prod_{j=0}^{k-1} F_a(y^{q^j})$.

\begin{lemma}
We have
\[
S_\bd(f, k; T) = \sum_{\substack{\bar y \in W_k \\ \hat y = \Teich(\bar y)}} F_a^{(k)}(\hat y).
\]
\end{lemma}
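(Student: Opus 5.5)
Our plan is to set up an explicit bijection between $W_k$ and the index set of $S_\bd(f,k;T)$, and then evaluate $F_a^{(k)}$ at each Teichm\"uller point using Lemma~\ref{lem:t_adic_splitting}.

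\textbf{Step 1: parametrize $W_k$.} The condition $\sigma^{-1}(y^{q^k}) = y$ says $y_{i,j}^{q^k} = y_{i,j+1}$ for all $i$ and all $j \in \Z/d_i\Z$. Hence each block $(y_{i,0},\dots,y_{i,d_i-1})$ is determined by $x_i := y_{i,0}$ via $y_{i,j} = x_i^{q^{kj}}$. Going around the cycle forces $x_i^{q^{kd_i}} = x_i$, i.e.\ $x_i \in \F_{q^{kd_i}}^\times$. Conversely, any tuple $(x_1,\dots,x_n)$ with $x_i \in \F_{q^{kd_i}}^\times$ defines a point of $W_k$. So $W_k$ is in bijection with $\prod_i \F_{q^{kd_i}}^\times$, and Teichm\"uller lifting identifies this with $\prod_i \mu_{q^{kd_i}-1}$. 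Teichm\"uller lifting commutes with $q^k$-th powers, so $\hat y_{i,j} = \hat x_i^{q^{kj}}$. Note also that, as a check, $|W_k| = \det(q^kI - P)$, consistent with Lemma~\ref{lem:det_identity}.

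\textbf{Step 2: evaluate $F_a^{(k)}(\hat y)$.} Expanding the definitions,
\[
F_a^{(k)}(\hat y)=\prod_{m=0}^{k-1}\prod_{l=0}^{a-1}\prod_{j=0}^{d-1}\prod_{u\in\supp(f)} E\Big(\pi\, c_u^{p^l}\prod_i \hat y_{i,j \bmod d_i}^{u_i p^l q^m}\Big).
\]
Here $\tau^l$ acts on the Teichm\"uller coefficient $c_u$ by $c_u \mapsto c_u^{p^l}$, and raising $\hat y$ to the power $p^l q^m$ gives the factor $p^lq^m$ in the exponents. Substituting $\hat y_{i,j \bmod d_i} = \hat x_i^{q^{kj}}$ (valid because $\hat x_i^{q^{kd_i}}=\hat x_i$), the inner argument becomes $\pi z_u^{p^{l} q^{m} q^{kj}}$, where $z_u := c_u \hat x^u$. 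This uses that $c_u^{q}=c_u$, so $c_u$ may be raised to the same powers. As $(j,m,l)$ ranges over $[0,d)\times[0,k)\times[0,a)$, the exponent $p^{l + a(m+kj)}$ runs over $p^r$ for exactly $r = 0,\dots,akd-1$. The product over $u$ is finite, so it factors out.

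\textbf{Step 3: apply Lemma~\ref{lem:t_adic_splitting}.} Apply the lemma with $\Q_{q^{kd}}$ in place of $\Q_q$. Its proof only uses $z^{p^{akd}}=z$, and $z_u$ is a Teichm\"uller unit of $\Q_{q^{kd}}$. This gives
\[
\prod_{r=0}^{akd-1}E(\pi z_u^{p^r})=(1+T)^{\Tr_{\Q_{q^{kd}}/\Q_p}(c_u\hat x^u)}.
\]
Multiplying over $u$ and using additivity of the trace yields $(1+T)^{\Tr(f(\hat x))}$. Summing over $W_k$ via the bijection of Step~1 gives $S_\bd(f,k;T)$.

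The main obstacle is the index bookkeeping in Step~2. We must check that the combination $l + a m + a k j$ covers $0,\dots,akd-1$ exactly once. We must also justify replacing $j \bmod d_i$ by $j$ in the exponent $q^{kj}$, which is where $\hat x_i^{q^{kd_i}}=\hat x_i$ is used.
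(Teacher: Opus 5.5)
Your proposal is correct and is essentially the standard argument that the paper delegates to \cite[Theorem 4.2]{partial_Haessig}. You identify $W_k$ with $\prod_i \F_{q^{kd_i}}^\times$ via $y_{i,j}=x_i^{q^{kj}}$, and then collapse the $akd$ Artin--Hasse factors attached to each $c_u\hat x^u$ using Lemma~\ref{lem:t_adic_splitting} over $\Q_{q^{kd}}$. Your index bookkeeping $r=l+a(m+kj)$ is right, as is your use of $\hat x_i^{q^{kd_i}}=\hat x_i$ to replace $j \bmod d_i$ by $j$. Your reading of $\sigma^{-1}(y^{q^k})=y$ as $y_{i,j}^{q^k}=y_{i,j+1}$ matches the convention $\bar y^{q^k}=\sigma\bar y$ the paper uses in Section~\ref{sec:meromorphy}.
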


\begin{proof}
The proof is identical to \cite[Theorem 4.2]{partial_Haessig}.
\end{proof}

\begin{theorem}[$T$-adic Dwork Trace Formula]\label{thm:trace}
For every integer $k \geq 1$,
\[
S_\bd(f, k; T) = \det(q^k I - P) \cdot \Tr\!\left( \sigma \circ \alpha_a^k \,\big|\, B \right) = \left( \prod_{i=1}^n (q^{k d_i} - 1) \right) \cdot \Tr\!\left( \sigma \circ \alpha_a^k \,\big|\, B \right).
\]
\end{theorem}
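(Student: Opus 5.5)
The plan is to compute $\Tr(\sigma\circ\alpha_a^k \mid B)$ directly from the matrix of $\sigma\circ\alpha_a^k$ in the orthonormal basis $\{\pi^{w(u)}y^u\}_{u\in M}$. We then compare it with the character-sum expression of the preceding lemma, $S_\bd(f,k;T)=\sum_{\bar y\in W_k} F_a^{(k)}(\hat y)$. The second equality in Theorem \ref{thm:trace} is just Lemma \ref{lem:det_identity}, so only the first equality needs proof.

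\textbf{Step 1 (trace of the twisted Frobenius).} Since $\tau$ fixes Teichm\"uller-free coefficients in the relevant sense ($\tau^a=1$ on $\Z_q$), we have $\alpha_a^k=\psi_{q^k}\circ F_a^{(k)}(y)$. Here $F_a^{(k)}(y)=\prod_{j=0}^{k-1}F_a(y^{q^j})$, which follows from the commutation $F(y)\circ\psi_q = \psi_q\circ F(y^{q})$. Write $F_a^{(k)}(y)=\sum_{v}C_v y^v$. Then
\[
\sigma\alpha_a^k(y^u)=\sum_{r} C_{q^k r-u}\, y^{Pr},
\]
so the diagonal coefficient at $u$ arises from $Pr=u$, i.e.\ $r=P^{-1}u$. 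The factors $\pi^{w(u)}$ cancel on the diagonal because $w(Pu)=w(u)$. Hence
\[
\Tr(\sigma\circ\alpha_a^k\mid B)=\sum_{u\in M} C_{(q^kP^{-1}-I)u}.
\]
Convergence is guaranteed by Theorem \ref{thm:complete_continuity}, whose estimate shows that this trace is well-defined as a $T$-adically convergent sum.

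\textbf{Step 2 (character sum side).} By the preceding lemma, the sum equals $\sum_{\bar y\in W_k}\sum_v C_v \hat y^v$. The points of $W_k$ are exactly the solutions of $\hat y^{q^kP^{-1}-I}=1$ in Teichm\"uller lifts, i.e.\ the kernel of the torus endomorphism $y\mapsto \sigma^{-1}(y^{q^k})y^{-1}$ corresponding to the matrix $A:=q^kP^{-1}-I$ acting on exponents. This kernel has order $|\det A|=|\det(q^kI-P)|$, using $\det P^{-1}=\pm1$ and Lemma \ref{lem:det_identity}, with the sign to be tracked. The standard character orthogonality on this finite group gives
\[
\sum_{\bar y\in W_k}\hat y^v=\begin{cases}\#W_k & v\in A^{T}\Z^N\text{ (appropriate transpose)},\\ 0&\text{otherwise.}\end{cases}
\]
Since $P$ is a permutation matrix, $P^{-1}=P^T$, and we can arrange the indexing so that $v=Au$ for some $u\in\Z^N$. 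We must also check that $v\in M$ forces $u\in M$. This holds because $C(\Delta)$ is $P$-stable and $u=\sum_{m\ge1}(q^{-k}P)^m v$ formally; more robustly, $u=(q^kP^{-1}-I)^{-1}v$ lies in the cone since $(q^{k}P^{-1}-I)^{-1}=\sum_{m\geq 1} q^{-km}P^{m}$ maps $C(\Delta)$ to itself. This gives $S_\bd(f,k;T)=\#W_k\sum_{u\in M}C_{Au}=\det(q^kI-P)\Tr(\sigma\alpha_a^k)$.

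\textbf{Main obstacle.} The delicate points are the bookkeeping of $P$ versus $P^{-1}$ and transposes, so that the diagonal condition in Step 1 matches the orthogonality condition in Step 2, and the sign of $\det(q^kI-P)$ versus $\#W_k$. The sign is positive by Lemma \ref{lem:det_identity}, since each $q^{kd_i}-1>0$. I would handle this bookkeeping by working blockwise on each cycle $\{y_{i,j}\}_{j\in\Z/d_i}$, where everything reduces to the computation in \cite[Theorem 4.2]{partial_Haessig}. I would also verify that rearranging the double sum is justified by $T$-adic convergence, which follows from Lemma \ref{lem:weight_estimate}.
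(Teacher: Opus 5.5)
Your proposal is correct and is essentially the standard Dwork-type computation the paper relies on by deferring to \cite[Theorem 4.4]{partial_Haessig}: the diagonal entries of $\sigma\circ\alpha_a^k$ are $C_{(q^kP^{-1}-I)u}$, orthogonality on the finite group $W_k$ selects exactly the exponents $v=(q^kP^{-1}-I)u$, and $(q^kP^{-1}-I)^{-1}=\sum_{m\ge 1}q^{-km}P^m$ preserves the closed $P$-stable cone $C(\Delta)$, which gives $u\in M$. The one thing to tidy is your transpose hedge, which you should resolve by direct computation rather than reindexing: for $\bar y\in W_k$ one has $\hat y^{v}=\hat y^{\,q^kP^{-1}v}$, so the characters trivial on $W_k$ are exactly $(q^kP^{-1}-I)\Z^N$, with no transpose. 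This is not just cosmetic, because $(q^kP-I)\Z^N\neq(q^kP^{-1}-I)\Z^N$ as soon as some $d_i\ge 3$, so the transpose cannot be absorbed by rearranging indices.
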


\begin{proof}
The proof is identical to \cite[Theorem 4.4]{partial_Haessig}. 
\end{proof}

In the $T$-adic theory of Liu and Wan \cite{Liu_2009}, great use is made of an auxiliary $C$-function. Motivated by this, we define the partial $T$-adic $C$-function:
\begin{align*}
C_\bd(f, T; s) &:= \exp\!\left( -\sum_{k=1}^\infty \frac{S_\bd(f, k; T)}{\prod_{i=1}^n (q^{k d_i} - 1)} \, \frac{s^k}{k} \right) \\
&=  \exp\!\left( -\sum_{k = 1}^\infty \Tr\!\left( \sigma \circ \alpha_{a}^k \,\big|\, B \right) \frac{s^k}{k} \right) \qquad 
\text{(Theorem~\ref{thm:trace}).}
\end{align*}
Define the \emph{$\sigma$-twisted Fredholm determinant}
\[
{\det}_\sigma\!\left( I - s\alpha_a \,\big|\, B \right) := \exp\!\left( -\sum_{k=1}^\infty \Tr\!\left( \sigma \circ \alpha_a^k \,\big|\, B \right) \frac{s^k}{k} \right).
\]
With this notation,
\[
C_\bd(f, T; s) = {\det}_\sigma\!\left( I - s\alpha_a \,\big|\, B \right).
\]

We warn the reader that, even though $\alpha_a$ is completely continuous, a $\sigma$-twisted Fredholm determinant need not be $T$-adic entire in general; cf.~\cite[Section 5]{partial_Haessig}. However, in our current situation, it will be $T$-adic meromorphic, as shown in Theorem \ref{thm:Cd_meromorphic} below. Before moving to meromorphy let us give the relation with the $L$-function. For $r \geq 1$, define the operator $\delta_r$ by $F(s)^{\delta_r} := F(s)/F(q^r s)$. For $\bd = (d_1, \dots, d_n)$ define $\delta_\bd := \delta_{d_1} \circ \cdots \circ \delta_{d_n}$.

\begin{theorem}\label{thm:L_C_relation}
\[
L_\bd(f, T; s)^{(-1)^{n-1}} = C_\bd(f, T; s)^{\delta_\bd}.
\]
\end{theorem}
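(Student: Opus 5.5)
We work at the level of logarithms, since $\delta_r$ becomes a linear operation on coefficients there. Write
\[
\log C_\bd(f,T;s) = -\sum_{k\ge1} c_k \frac{s^k}{k}, \qquad c_k := \frac{S_\bd(f,k;T)}{\prod_{i=1}^n (q^{kd_i}-1)},
\]
where this denominator form of $c_k$ is justified by Theorem~\ref{thm:trace}. Since $\log F(s)^{\delta_r} = \log F(s) - \log F(q^r s)$, the operator $\delta_r$ acts on the coefficient of $s^k/k$ by multiplication by $(1-q^{rk})$. These multiplications commute, so $\delta_\bd = \delta_{d_1}\circ\cdots\circ\delta_{d_n}$ multiplies the $k$-th coefficient by
\[
\prod_{i=1}^n (1-q^{kd_i}) = (-1)^n \prod_{i=1}^n (q^{kd_i}-1).
\]

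Applying this to $\log C_\bd$, the $k$-th coefficient $-c_k$ becomes
\[
-(-1)^n \prod_{i=1}^n(q^{kd_i}-1)\cdot c_k = (-1)^{n-1} S_\bd(f,k;T).
\]
Hence
\[
\log C_\bd(f,T;s)^{\delta_\bd} = (-1)^{n-1}\sum_{k\ge1} S_\bd(f,k;T)\frac{s^k}{k} = (-1)^{n-1}\log L_\bd(f,T;s).
\]
Exponentiating gives the identity. Both sides lie in $1+s\,\mathcal{O}_T[[s]]$, or in $1+s\,\Q_q((\pi^{1/D}))[[s]]$ a priori, and on that group $\exp$ and $\log$ are mutually inverse bijections onto $s\,\Q_q((\pi^{1/D}))[[s]]$. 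So equality of logarithms implies equality of the series.

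We need a little care with the setting: $C_\bd$ is defined by an exponential whose coefficients $c_k$ have denominators $\prod_i(q^{kd_i}-1)$. Those denominators are $p$-adic units, so there is no integrality obstruction. In any case we only need formal identities in $1+s\,\Q_p[[T]][[s]]$. The operators $\delta_r$ are multiplicative, since $(FG)^{\delta_r} = F^{\delta_r}G^{\delta_r}$, which makes them compatible with taking $\exp$ of sums.

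We do not expect a real obstacle; the only point needing care is the bookkeeping of the sign $(-1)^n$ against the minus sign in the definition of $C_\bd$, which together produce $(-1)^{n-1}$.
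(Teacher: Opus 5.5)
Your argument is correct and is essentially the computation the paper defers to (Lemma 6.1 of the cited earlier work). On logarithms, $\delta_r$ multiplies the coefficient of $s^k/k$ by $1-q^{rk}$, so $\delta_\bd$ cancels the factor $\prod_i (q^{kd_i}-1)$ in the definition of $C_\bd$ up to the sign $(-1)^n$, and that sign combines with the leading minus sign to give $(-1)^{n-1}$.

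Two small points. First, the denominator form is simply the definition of $C_\bd$; the trace formula is only what identifies it with ${\det}_\sigma$. Second, you should not appeal to $C_\bd \in 1+s\,\mathcal{O}_T[[s]]$ here, because the paper derives that integrality (Corollary~\ref{cor:Cd_integral}) from this very theorem. Your fallback of working formally over the $\Q$-algebra $\Q_p[[T]]$ is the right setting.
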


\begin{proof}
The proof is identical to that of \cite[Lemma 6.1]{partial_Haessig}.
\end{proof}

\begin{corollary}\label{cor:Cd_integral}
\[
C_\bd(f, T; s) \;\in\; 1 + s \Z_p[[T]][[s]].
\]
\end{corollary}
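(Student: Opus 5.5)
We will show that $C_\bd(f,T;s)$ can be written as an infinite product of shifted copies of $L_\bd(f,T;s)^{\pm 1}$, and then use Theorem~\ref{thm:euler} to get integrality.

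\textbf{Step 1: expand the denominator.} Start from the defining series of $C_\bd(f,T;s)$ and expand
\[
\frac{1}{\prod_{i=1}^n (q^{kd_i}-1)} = (-1)^n \prod_{i=1}^n \frac{1}{1-q^{kd_i}} = (-1)^n \sum_{m_1,\dots,m_n \ge 0} q^{k(m_1d_1+\cdots+m_nd_n)}.
\]
This is a formal identity in $\Z[[q^k]]$. It converges $p$-adically, because $q^{kd_i}\to 0$ as $m_i\to\infty$.

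\textbf{Step 2: rewrite $C_\bd$ as a product.} Substitute the expansion into the exponent. Writing $|m\cdot\bd| := \sum_i m_i d_i$, this gives
\[
\log C_\bd(f,T;s) = (-1)^{n-1}\sum_{m\in\Z_{\ge0}^n} \sum_{k\ge1} S_\bd(f,k;T)\frac{(q^{|m\cdot\bd|}s)^k}{k} = (-1)^{n-1}\sum_{m} \log L_\bd\!\left(f,T;q^{|m\cdot\bd|}s\right).
\]
Hence
\[
C_\bd(f,T;s) = \prod_{m\in\Z_{\ge0}^n} L_\bd\!\left(f,T;q^{m_1d_1+\cdots+m_nd_n}s\right)^{(-1)^{n-1}}.
\]
This is the inverse of the $\delta_\bd$ relation in Theorem~\ref{thm:L_C_relation}. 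Indeed, applying $\delta_{d_i}$ telescopes the $m_i$-product, so the two results are consistent.

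\textbf{Step 3: integrality of each factor.} By Theorem~\ref{thm:euler}, each factor $L_\bd(f,T;q^{j}s)^{\pm1}$ lies in $1+s\Z_p[[T]][[s]]$. This ring is a multiplicative group, and substituting $s\mapsto q^j s$ preserves it.

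\textbf{Step 4: convergence of the infinite product.} This is the main obstacle. We need the product to converge in $\Z_p[[T]][[s]]$ under the $(p,T)$-adic, or equivalently coefficientwise $p$-adic, topology. Fix a power $s^\ell$. Write a factor as $1+\sum_{k\ge1} c_k q^{jk}s^k$ with $c_k \in \Z_p[[T]]$. Then it is congruent to $1$ modulo $q^{j}$ in every coefficient. There are only finitely many $m$ with $|m\cdot\bd|<J$. So modulo $p^{J}$ the product becomes a finite product of elements of $1+s\Z_p[[T]][[s]]$, and these finite products are compatible as $J$ grows. Since $\Z_p[[T]]$ is $p$-adically complete, the limit lies in $1+s\Z_p[[T]][[s]]$. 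We must also check that this limit equals the exponential in the definition. This holds because the logarithmic identity in Step 2 is an identity of convergent series in $s\Q_p[[T]][[s]]$, and $\exp$ is injective there. This proves the corollary.
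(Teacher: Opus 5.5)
Your proposal is correct and follows essentially the same approach as the paper: write $C_\bd(f,T;s)$ (up to the sign $(-1)^{n-1}$ in the exponent) as the infinite product $\prod_{j\in\Z_{\ge0}^n} L_\bd(f,T;q^{j_1d_1+\cdots+j_nd_n}s)$, and conclude with the Euler-product integrality of Theorem~\ref{thm:euler}. The only differences are that you obtain this product directly from the geometric-series expansion of $1/\prod_i(q^{kd_i}-1)$ instead of inverting $\delta_\bd$ in Theorem~\ref{thm:L_C_relation}, and that you check the $p$-adic convergence of the infinite product, which the paper leaves implicit.
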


\begin{proof}
Observe that the inverse $\delta_r^{-1}$ of $\delta_r$ is given by $F(s)^{\delta_r^{-1}} = \prod_{j \geq 0} F(q^{jr} s)$. Thus $\delta_\bd^{-1} = \delta_{d_n}^{-1} \circ \cdots \circ \delta_{d_1}^{-1}$ gives
\begin{align}\label{equ: C and L inverse relation}
C_\bd(f, T; s)^{(-1)^{n-1}} &= L_\bd(f, T; s)^{\delta_\bd^{-1}} \notag \\
&= \prod_{(j_1, \dots, j_n) \in \Z_{\geq 0}^n} L_\bd\!\left( f, T; q^{j_1 d_1 + \cdots + j_n d_n} s \right).
\end{align}
By Theorem~\ref{thm:euler}, every factor lies in $1 + s\Z_p[[T]][[s]]$, proving the result.
\end{proof}

Denote by $K := \operatorname{Frac}(\OT)$ the fraction field of $\OT$, and let $\Omega$ be the completion of an algebraic closure of
$K$. By Theorem~\ref{thm:complete_continuity} the Frobenius $\alpha_a$ is completely continuous on
$B$. Thus, every generalized eigenspace
\[
V_\lambda := \bigcup_{j \ge 1} \ker\big((\alpha_a - \lambda)^j \,\big|\, B \widehat{\otimes}_{\mathcal{O}_T} \Omega\big)
\qquad (\lambda \neq 0)
\]
is finite-dimensional, and for every real $c$ only finitely many eigenvalues satisfy
$\ord_T \lambda \le c$; in particular $\ord_T \lambda \to \infty$. By Lemma~\ref{lem:commutation}
the shift operator $\sigma$ commutes with $\alpha_a$, and so each $V_\lambda$ is $\sigma$-stable. Set
$m_\lambda(\sigma) := \Tr(\sigma \mid V_\lambda)$. Since $\sigma^d = 1$, we have $m_\lambda(\sigma) \in \Z[\zeta_d]$. By the same argument as in \cite[Theorem~7.1]{partial_Haessig}, we have $m_\lambda(\sigma) \in \Z$. 

\begin{lemma}\label{lem: C product over eigenvalues}
\[
C_\bd(f, T; s) = \prod_{\lambda \neq 0} (1 - \lambda s)^{m_\lambda(\sigma)} \in 1 + s\,\Omega[[s]].
\]
\end{lemma}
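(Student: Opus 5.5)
The plan is to compute the logarithmic derivative of $C_\bd(f,T;s)$ and match it term by term with the spectral decomposition of $\sigma \circ \alpha_a^k$. By definition,
\[
\log C_\bd(f,T;s) = -\sum_{k\ge1} \Tr\!\left(\sigma\circ\alpha_a^k \,\big|\, B\right)\frac{s^k}{k}.
\]
The claim follows once we establish, for every $k\ge1$, the spectral trace identity
\[
\Tr\!\left(\sigma\circ\alpha_a^k \,\big|\, B\right) = \sum_{\lambda\neq0} m_\lambda(\sigma)\,\lambda^k. \tag{$\ast$}
\]
Indeed, summing $-\sum_k m_\lambda(\sigma)\lambda^k s^k/k = m_\lambda(\sigma)\log(1-\lambda s)$ over $\lambda$ and exponentiating gives the product. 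Because $\ord_T\lambda\to\infty$ with finitely many $\lambda$ below any bound, the product converges in $1+s\,\Omega[[s]]$ coefficientwise. Exponentiation is legitimate formally because the exponents $m_\lambda(\sigma)$ are integers.

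To prove $(\ast)$, I would use the Riesz decomposition available for completely continuous operators, which holds by Theorem~\ref{thm:complete_continuity}. Fix $c>0$ and let $\lambda_1,\dots,\lambda_r$ be the finitely many eigenvalues with $\ord_T\lambda\le c$. Set $V_{\le c}=\bigoplus_i V_{\lambda_i}$. There is an $\alpha_a$-stable complement $W_{>c}$ with
\[
B\widehat\otimes\Omega = V_{\le c}\oplus W_{>c},
\]
on which all eigenvalues of $\alpha_a$ have $\ord_T>c$. This is Serre's theory: factor the Fredholm determinant $\det(I-s\alpha_a)=Q(s)R(s)$ with $Q$ a polynomial, and take $V_{\le c}=\ker Q^*(\alpha_a)$ and $W_{>c}$ the image.

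Since $\sigma$ commutes with $\alpha_a$ by Lemma~\ref{lem:commutation}, both summands are $\sigma$-stable. Indeed, both $\ker Q^*(\alpha_a)$ and $\operatorname{im} Q^*(\alpha_a)$ are preserved by any operator commuting with $\alpha_a$. Hence
\[
\Tr(\sigma\alpha_a^k) = \Tr(\sigma\alpha_a^k\mid V_{\le c}) + \Tr(\sigma\alpha_a^k\mid W_{>c}).
\]
On each finite-dimensional $V_\lambda$, the operators $\sigma$ and $\alpha_a$ commute, so we can simultaneously triangularize them. Then $\sigma\alpha_a^k$ has diagonal entries $\zeta_j\lambda^k$, where the $\zeta_j$ are the eigenvalues of $\sigma$. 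This gives
\[
\Tr(\sigma\alpha_a^k\mid V_\lambda)=\lambda^k\,\Tr(\sigma\mid V_\lambda)=m_\lambda(\sigma)\lambda^k.
\]

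The main obstacle is controlling the remainder $\Tr(\sigma\alpha_a^k\mid W_{>c})$: we must show its $T$-adic valuation tends to infinity as $c\to\infty$. On $W_{>c}$, the untwisted Fredholm determinant of $\alpha_a$ has all reciprocal zeros of valuation $>c$, so $\Tr(\alpha_a^k\mid W_{>c})$ has valuation $>kc$. With the twist this is less immediate, because $\sigma$ is an isometry but not a scalar. I would handle it by running the same decomposition for $\sigma\alpha_a$ itself, which is also completely continuous and commutes with $\sigma$ and $\alpha_a$. Since $(\sigma\alpha_a^k)^d=\alpha_a^{kd}$, the eigenvalues of $\sigma\alpha_a^k$ on $W_{>c}$ are $d$-th roots of eigenvalues of $\alpha_a^{kd}$ there, hence have valuation $>kc$. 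Its trace on $W_{>c}$ is a convergent sum of these eigenvalues, by Serre's trace formula for completely continuous operators, so it has valuation $>kc$. Letting $c\to\infty$ yields $(\ast)$, and the lemma follows.
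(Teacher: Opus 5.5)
Your proposal is correct and follows the paper's route. Both arguments prove $\Tr(\sigma\circ\alpha_a^k \mid B)=\sum_{\lambda\neq0} m_\lambda(\sigma)\lambda^k$ by computing the twisted trace on each $V_\lambda$ and then exponentiate; the paper writes $\alpha_a|_{V_\lambda}=\lambda I+N_\lambda$ and notes that $\sigma N_\lambda^j$ is nilpotent, where you instead triangularize $\sigma$ and $\alpha_a$ simultaneously. The one real difference is that the paper simply asserts that the trace on $B$ is the sum of the traces on the generalized eigenspaces, whereas you justify this step with the Riesz splitting $V_{\le c}\oplus W_{>c}$ and the bound $\ord_T \Tr(\sigma\alpha_a^k\mid W_{>c})> kc$, obtained from $(\sigma\alpha_a^k)^d=\alpha_a^{kd}$ and Serre's trace formula.
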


\begin{proof}
For $\lambda \neq 0$, write $\alpha_a|_{V_\lambda} = \lambda I + N_\lambda$ with
$N_\lambda$ nilpotent. Since $\alpha_a$ commutes with $\sigma$, so does $N_\lambda$. Hence, $\sigma N_\lambda^{\,j}$ is nilpotent for $j \ge 1$. Thus, 
$\Tr(\sigma \circ \alpha_a^k \mid V_\lambda) = \lambda^k\, m_\lambda(\sigma)$, and so for all $k \geq 1$:
\[
\Tr\big(\sigma \circ \alpha_a^k \,\big|\, B\big) = \sum_{\lambda \neq 0} m_\lambda(\sigma)\, \lambda^k.
\]
Substituting this into the definition of the $\sigma$-twisted Fredholm determinant gives the result.
\end{proof}

\begin{theorem}\label{thm:Cd_meromorphic}
The partial $T$-adic $C$-function $C_\bd(f, T; s)$ is $T$-adic meromorphic over $\Z_p[[T]]$. Specifically,
there exist $T$-adic entire $A(T, s),\, B(T, s) \in 1 + s\,\Z_p[[T]][[s]]$ such that
\[
C_\bd(f, T; s) = \frac{A(T, s)}{B(T, s)}.
\]
\end{theorem}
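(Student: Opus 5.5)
Since $\sigma^d = 1$ and $\sigma$ commutes with $\alpha_a$ (Lemma~\ref{lem:commutation}), the plan is to split $B$ into $\sigma$-isotypic pieces. Over $\OT[\zeta_d]$, with $d$ inverted if necessary, the operators $e_\chi := \frac{1}{d}\sum_{j=0}^{d-1} \chi(\sigma)^{-j}\sigma^j$ for $\chi$ ranging over the $d$-th roots of unity are commuting idempotents. Each $B_\chi := e_\chi B$ is $\alpha_a$-stable, and $\alpha_a|_{B_\chi}$ is again completely continuous. On $B_\chi$ the operator $\sigma$ acts by the scalar $\chi$, so $\Tr(\sigma\alpha_a^k \mid B) = \sum_\chi \chi\,\Tr(\alpha_a^k \mid B_\chi)$. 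Hence ${\det}_\sigma(I - s\alpha_a) = \prod_\chi \det(I - s\alpha_a \mid B_\chi)^{\chi}$, where the exponent $\chi$ is formal.

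The formal exponent is meaningless as it stands, so I will instead work with the integer multiplicities of Lemma~\ref{lem: C product over eigenvalues}. Write $n_\lambda(\zeta) := \dim$ of the $\zeta$-eigenspace of $\sigma$ on $V_\lambda$. Then $m_\lambda(\sigma) = \sum_\zeta n_\lambda(\zeta)\zeta \in \Z$. Since $m_\lambda(\sigma)$ is rational, averaging over $\Gal(\Q(\zeta_d)/\Q)$ gives
\[
m_\lambda(\sigma) = \sum_{e \mid d} \mu(e)\, N_\lambda(e), \qquad N_\lambda(e) := \sum_{\ord \zeta = e} n_\lambda(\zeta)\cdot \frac{1}{\varphi(e)}\cdot\varphi(e),
\]
or more cleanly, in terms of dimensions of fixed spaces $\dim V_\lambda^{\sigma^{j}}$ via Möbius or character orthogonality. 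The key is to write $m_\lambda(\sigma)$ as an integer combination of dimensions $\dim(V_\lambda \cap B^{\sigma^{j}}\text{-type subspaces})$, where each of these subspaces comes from an $\alpha_a$-stable Banach submodule defined over $\OT$ itself. The natural candidates are the $\sigma^{j}$-invariant subspaces $B^{\sigma^j}$, which have orthonormal bases given by orbit sums $\sum_{i} \pi^{w(u)} y^{P^i u}$ over $\langle P^j\rangle$-orbits, using $w(Pu) = w(u)$. These are defined over $\OT$ with no denominators. Because $\alpha_a$ commutes with $\sigma$, it preserves each $B^{\sigma^j}$ and remains completely continuous there. Its ordinary Fredholm determinant $D_j(s) := \det(I - s\alpha_a \mid B^{\sigma^j})$ is therefore $T$-adic entire with coefficients in $\OT$ (Serre's theory over $\OT$). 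Moreover $D_j(s) = \prod_\lambda (1-\lambda s)^{\dim V_\lambda^{\sigma^j}}$. Since $\dim V_\lambda^{\sigma^j} = \sum_{\zeta^j=1} n_\lambda(\zeta)$, and $m_\lambda$ is a Galois-invariant class function of the $\sigma$-representation $V_\lambda$, Artin's induction theorem for the cyclic group $\langle\sigma\rangle$ expresses it as a rational combination of such fixed-space dimensions. In fact the cyclic case gives an explicit integral formula $m_\lambda(\sigma) = \sum_{e\mid d} c_e \dim V_\lambda^{\sigma^{e}}$ with $c_e \in \Z$: the trace of $\sigma$ on a Galois-stable representation is a $\Z$-combination of the characters $\Z[\langle\sigma\rangle/\langle\sigma^e\rangle]$ evaluated at $\sigma$, and these permutation characters are exactly the fixed-space dimensions. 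This gives $C_\bd = \prod_e D_e(s)^{c_e}$. Taking $A$ to be the product over positive $c_e$ and $B$ the product over negative $c_e$ yields entire functions in $1+s\OT[[s]]$.

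The last step is to descend from $\OT$ to $\Z_p[[T]]$. Corollary~\ref{cor:Cd_integral} already gives $C_\bd \in 1+s\Z_p[[T]][[s]]$. For $A$ and $B$ themselves, I will use Galois invariance: $D_e$ is invariant under $\Gal(\Q_q/\Q_p)$ since $\alpha_a = \alpha^a$ with $\alpha$ $\tau$-semilinear, as in Liu--Wan, and it is invariant under $\pi^{1/D}\mapsto \zeta\pi^{1/D}$ via the diagonal rescaling $y^u \mapsto \zeta^{Dw(u)}y^u$, which is compatible with $w(Pu)=w(u)$. This descent is likely to be the main obstacle. A clean alternative is to define the determinant of $\alpha$ (not $\alpha_a$) over $\Z_p[[\pi^{1/D}]]$ using a $\Z_p$-basis of $\Z_q$ and then take norms, exactly as Liu--Wan do. A second possible obstacle is the precise integral Möbius/Artin coefficients $c_e$, which I expect to be the standard formula for cyclic groups.
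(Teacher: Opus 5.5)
Your proof has a genuine gap: the claimed integral formula $m_\lambda(\sigma)=\sum_{e\mid d}c_e\dim V_\lambda^{\sigma^e}$ with $c_e\in\Z$ is false in general. The step $C_\bd=\prod_e D_e^{c_e}$ rests on it.

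Your Galois averaging is correct, but what it gives is
\[
m_\lambda(\sigma)=\sum_{e\mid d}\frac{\mu(e)}{\varphi(e)}N_\lambda(e), \qquad N_\lambda(e)=\sum_{\ord\zeta=e}n_\lambda(\zeta).
\]
The factor $\frac{1}{\varphi(e)}\cdot\varphi(e)$ in your display is spurious. Fixed-space dimensions only see the totals $N_\lambda(e)=\sum_{j\mid e}\mu(e/j)\dim V_\lambda^{\sigma^j}$, so these denominators cannot be removed.

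Already $d=3$ shows this. Let $\omega,\omega^2$ be the primitive cube roots of unity, and suppose $\sigma$ acts on $V_\lambda$ with eigenvalue $1$ of multiplicity $a$ and $\omega,\omega^2$ each of multiplicity $b$. Then
\[
m_\lambda(\sigma)=a-b,\qquad \dim V_\lambda^{\sigma}=a,\qquad \dim V_\lambda^{\sigma^3}=a+2b.
\]
Every $\Z$-combination of the last two has even coefficient on $b$. The forced identity is $m_\lambda(\sigma)=\tfrac32\dim V_\lambda^{\sigma}-\tfrac12\dim V_\lambda$, i.e.\ $C_\bd=D_1^{3/2}D_3^{-1/2}$.

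Your appeal to Artin induction conflates two different things. Artin induction writes the representation $V_\lambda$ as a $\Z$-combination of permutation modules $\Q[\langle\sigma\rangle/\langle\sigma^e\rangle]$, and evaluating that combination at $\sigma$ just picks out the coefficient of the trivial module. But $\dim V_\lambda^{\sigma^e}$ is the \emph{inner product} of $\chi_{V_\lambda}$ with a permutation character, not a character value. Recovering the coefficients from these inner products requires inverting the Gram matrix $(\gcd(e,e'))_{e,e'\mid d}$, which has determinant $2$ when $d=3$.

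So your construction only exhibits some power $C_\bd^N$ as a ratio of entire series over $\OT$. Extracting the $N$-th root needs exactly $m_\lambda(\sigma)\in\Z$ together with convergence of $\prod_\lambda(1-\lambda s)^{m_\lambda(\sigma)}$, and once you have those the $D_e$ are superfluous. That is the paper's route:
\begin{itemize}
\item[(a)] It sets $A:=\prod_{m_\lambda(\sigma)>0}(1-\lambda s)^{m_\lambda(\sigma)}$ and $B:=\prod_{m_\lambda(\sigma)<0}(1-\lambda s)^{-m_\lambda(\sigma)}$ directly over $\Omega$. These are entire because $\ord_T\lambda\to\infty$, and coprime by construction.
\item[(b)] It descends at the level of $A,B$ themselves. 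Any $g\in\operatorname{Aut}(\Omega/K_0)$ fixes $C_\bd$, so $g(A)/g(B)$ is again a reduced fraction, forcing $g(A)=A$ and $g(B)=B$.
\item[(c)] It then uses that the $\lambda$ are integral over $\OT$, and that $\Z_p[[T]]$ is integrally closed.
\end{itemize}

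If you want to keep Fredholm determinants of $\alpha_a$ on $\OT$-submodules, you need pieces finer than $B^{\sigma^j}$. The eigenspaces $\ker(\sigma-\zeta)$ in $B\widehat{\otimes}_{\OT}\OT[\zeta_d]$ have denominator-free orthonormal bases of twisted orbit sums $\sum_{0\le i<\ell}\zeta^{-i}\sigma^i(\pi^{w(u)}y^u)$, taken over $P$-orbits whose length $\ell$ satisfies $\zeta^\ell=1$. Combined with Lemma~\ref{lem:isotypic_balance}, their determinants are the $E_m$ of Theorem~\ref{thm:structural_factorization}. A descent step would still be needed after that.
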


\begin{proof}
Since each $m_\lambda(\sigma) \in \Z$, define
\[
A(T, s) := \prod_{m_\lambda(\sigma) > 0} (1 - \lambda s)^{m_\lambda(\sigma)} \qquad \text{and} \qquad
\qquad
B(T, s) := \prod_{m_\lambda(\sigma) < 0} (1 - \lambda s)^{-m_\lambda(\sigma)}.
\]
Thus, $C_\bd = A/B$ with $A, B \in 1 + s\,\Omega[[s]]$. Note that $A$ and $B$ are coprime and $T$-adic entire since $\ord_T \lambda \to \infty$.

We are left with showing $A$ and $B$ lie in $1 + s\,\Z_p[[T]][[s]]$. Set $K_0 := \operatorname{Frac}(\Z_p[[T]])$. Since the coefficients of $C_\bd$ lie in $\Z_p[[T]]$, any automorphism $g \in \operatorname{Aut}(\Omega / K_0)$ acts trivially on $C_{\mathbf{d}}$. Thus, $C_{\mathbf{d}} = g(A)/g(B)$. Since \(g\) preserves the \(T\)-adic valuation, we see that $g(A)$ and $g(B)$ are $T$-adic entire, coprime, and give a reduced fraction of $C_\bd$. Hence, $g(A) = A$ and $g(B) = B$. Consequently, the coefficients of $A$ and $B$ lie in $K_0$. 

Next, since the Fredholm determinant of $\alpha_a$ on $B$ lies in $1 + s \mathcal{O}_T[[s]]$, each $\lambda$ is integral over $\mathcal{O}_T$. Hence, the coefficients of $A$ and $B$ are integral over $\mathcal{O}_T$ and lie in $K_0$. Since $\mathcal{O}_T$ is integral over $\Z_p[[T]]$, and since \(\Z_p[[T]]\) is integrally closed in its fraction field \(K_0\), we have $A, B \in 1+s\,\Z_p[[T]][[s]]$.
\end{proof}

\begin{corollary}\label{cor: L is T-adic mero}
The partial $T$-adic $L$-function $L_\bd(f, T; s)$ is $T$-adic meromorphic over $\Z_p[[T]]$. Specifically,
there exist $T$-adic entire $A(T, s),\, B(T, s) \in 1 + s\,\Z_p[[T]][[s]]$ such that
\[
L_\bd(f, T; s) = \frac{A(T, s)}{B(T, s)}.
\]
\end{corollary}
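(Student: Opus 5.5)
The plan is to transfer meromorphy from $C_\bd$ to $L_\bd$ through Theorem~\ref{thm:L_C_relation}, which gives $L_\bd(f,T;s)^{(-1)^{n-1}} = C_\bd(f,T;s)^{\delta_\bd}$. By Theorem~\ref{thm:Cd_meromorphic} we may write $C_\bd = A_0/B_0$ with $A_0, B_0 \in 1+s\Z_p[[T]][[s]]$ both $T$-adic entire. Since $\delta_r$ is multiplicative, $(A_0/B_0)^{\delta_r} = A_0^{\delta_r}/B_0^{\delta_r}$. Applying $\delta_\bd = \delta_{d_1}\circ\cdots\circ\delta_{d_n}$ then expresses $C_\bd^{\delta_\bd}$ as a ratio of two finite products. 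Each product has the form
\[
\prod_{\epsilon \in \{0,1\}^n} H\big(q^{\epsilon_1 d_1 + \cdots + \epsilon_n d_n} s\big)^{(-1)^{|\epsilon|}}, \qquad H \in \{A_0, B_0\}.
\]
Grouping the factors with positive exponent into the numerator and the rest into the denominator gives
\[
L_\bd^{(-1)^{n-1}} = \frac{\prod_{|\epsilon| \text{ even}} A_0(q^{\langle \epsilon, \bd\rangle}s)\,\prod_{|\epsilon| \text{ odd}} B_0(q^{\langle \epsilon, \bd\rangle}s)}{\prod_{|\epsilon| \text{ odd}} A_0(q^{\langle \epsilon, \bd\rangle}s)\,\prod_{|\epsilon| \text{ even}} B_0(q^{\langle \epsilon, \bd\rangle}s)}.
\]
If $n$ is even, swap the roles of numerator and denominator to get $L_\bd$ itself.

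Next I check that numerator and denominator are $T$-adic entire in $1+s\Z_p[[T]][[s]]$. Substituting $s \mapsto q^m s$ keeps coefficients in $\Z_p[[T]]$ and constant term $1$. It preserves $T$-adic entireness, since the coefficient of $s^k$ gets multiplied by $q^{mk}$, and $q^{mk}$ is a unit in $\Z_p[[T]]$ for $T$-adic order purposes ($\ord_T$ of a nonzero element of $\Z_p$ is $0$). A finite product of $T$-adic entire series in $1+s\Z_p[[T]][[s]]$ is again of this type. This gives $A, B$ as required.

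The only subtle point is the notion of ``$T$-adic entire'', namely that coefficients $c_k$ satisfy $\ord_T(c_k)/k \to \infty$. One has to confirm this is stable under $s\mapsto q^m s$ and under products. Both follow formally: in a product, $\ord_T$ of a coefficient is at least the minimum over splittings of a sum of orders, and the growth condition is superadditive in the required sense. Equivalently, in terms of zeros over $\Omega$, the reciprocal zeros of the new series are the $q^m\lambda$ with the same $T$-adic orders, still tending to infinity. No cancellation or reducedness is needed, since the statement only asks for some such expression, not a reduced one.
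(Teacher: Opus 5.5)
Your proposal is correct and follows the paper's approach. The paper proves this in one line by combining Theorem~\ref{thm:Cd_meromorphic} with Theorem~\ref{thm:L_C_relation}, and your expansion of $C_\bd^{\delta_\bd}$ as a signed product over subsets is the same identity the paper writes out in Section~\ref{sec:noh_L_func}. Your check that $s \mapsto q^{d_J}s$ and finite products preserve integrality and $T$-adic entireness fills in details the paper leaves implicit.
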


\begin{proof}
This follows from Theorems \ref{thm:Cd_meromorphic} and \ref{thm:L_C_relation}.
\end{proof}

\section{Rationality of specialization}\label{sec:rationality}

We now move to partial toric exponential sums of conductor a $p$-power. Let $\psi: \Z_p \to \C_p^\times$ be a continuous, locally constant, additive character of conductor $p^m$. Setting $\pi_\psi := \psi(1) - 1$, then $\psi(z) = (1 + \pi_\psi)^z$ for all $z \in \Z_p$. The partial exponential sums are defined by: for each $k \geq 1$,
\[
S_{\bd, \psi}(f, k) := \sum \psi \circ \Tr_{\Q_{q^{kd}}/\Q_p}\big(f(x_1, \dots, x_n)\big),
\]
where the sum runs over $x_i \in \mu_{q^{kd_i} - 1}$ for $1 \leq i \leq n$. The associated partial $L$-function is
\[
L_{\bd, \psi}(f; s) := \exp\!\left( \sum_{k=1}^\infty S_{\bd, \psi}(f, k) \, \frac{s^k}{k} \right).
\]

We define a specialization map $\mathrm{sp}_\psi$ as follows.

\begin{lemma}\label{lem:specialization}
There exists $\gamma_\psi \in \Z_p[\zeta_{p^m}]$ satisfying $E(\gamma_\psi) = 1 + \pi_\psi$, with $\ord_{\pi_\psi}(\gamma_\psi)  = 1$. The map $\pi \mapsto \gamma_\psi$ extends uniquely to a continuous ring homomorphism
\[
\mathrm{sp}_\psi: \ALam = \Z_q\big[[\pi^{1/D}]\big] \;\longrightarrow\; \Z_q[\zeta_{p^m}, \gamma_\psi^{1/D}],
\]
and this map sends $T = E(\pi) - 1 \mapsto \pi_\psi$. Consequently, the specialization $T \mapsto \pi_\psi$ is a well-defined continuous homomorphism from $\Z_p[[T]]$ to $\Z_q[\zeta_{p^m}, \gamma_\psi^{1/D}]$, and for every $h \in \Z_p[[T]]$,
\[
\ord_{\pi_\psi}\!\big( \mathrm{sp}_\psi(h) \big) \;\geq\; \ord_T(h).
\]
\end{lemma}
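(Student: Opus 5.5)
The idea is to invert the Artin--Hasse exponential, construct $\gamma_\psi$ as a power series in $\pi_\psi$, and then specialize everything else by continuity. Since $E(t) = 1 + t + \cdots \in 1 + t\Z_p[[t]]$, the series $E(t) - 1$ has a compositional inverse $\ell(u) = u + \cdots \in u\Z_p[[u]]$. We know $\pi_\psi = \zeta - 1$ for a primitive $p^m$-th root of unity $\zeta$ (if $m = 0$ then $\pi_\psi = 0$, and we take $\gamma_\psi = 0$, the trivial case). Hence $\ord_p \pi_\psi = 1/(p^{m-1}(p-1)) > 0$, and the series $\gamma_\psi := \ell(\pi_\psi)$ converges in $\Z_p[\zeta_{p^m}]$, which is complete. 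Because $E(\ell(u)) = 1 + u$ as formal series, and evaluation at a topologically nilpotent element is a continuous ring homomorphism, we get $E(\gamma_\psi) = 1 + \pi_\psi$. Writing $\gamma_\psi = \pi_\psi(1 + \pi_\psi \cdot(\text{integral}))$ shows $\gamma_\psi/\pi_\psi$ is a unit, so $\ord_{\pi_\psi}\gamma_\psi = 1$.

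Next comes the ring map. Fix a $D$-th root $\gamma_\psi^{1/D}$ in an algebraic closure; it is topologically nilpotent and lies in the ring of integers $R$ of the finite extension $\Q_q(\zeta_{p^m}, \gamma_\psi^{1/D})$, which is $\pi_\psi$-adically complete. Define $\mathrm{sp}_\psi$ on $\OT = \Z_q[[\pi^{1/D}]]$ by
\[
\sum_i c_i \pi^{i/D} \mapsto \sum_i c_i \gamma_\psi^{i/D}.
\]
This converges because $\ord \gamma_\psi^{i/D} \to \infty$, and it is a continuous ring homomorphism by the usual substitution property of power series over a complete ring. Uniqueness holds because the polynomials $\Z_q[\pi^{1/D}]$ are dense and a continuous homomorphism is determined by the image of $\pi^{1/D}$. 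Strictly speaking, uniqueness holds once that $D$-th root is chosen, and extending $\pi \mapsto \gamma_\psi$ requires this choice. By continuity, $\mathrm{sp}_\psi(T) = \mathrm{sp}_\psi(E(\pi) - 1) = E(\gamma_\psi) - 1 = \pi_\psi$.

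For the last claim, restrict to $\Z_p[[T]]$; the restriction is the substitution $h(T) \mapsto h(\pi_\psi)$, which lands in $\Z_p[\zeta_{p^m}]$. If $\ord_T h = k$, write $h = T^k h_1$ with $h_1 \in \Z_p[[T]]$. Then $\mathrm{sp}_\psi(h) = \pi_\psi^k h_1(\pi_\psi)$, and $h_1(\pi_\psi)$ is integral, giving $\ord_{\pi_\psi} \geq k$.

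The only delicate point is convergence and continuity, and it is handled by $\ord_p \pi_\psi > 0$ together with completeness of the finite extension's ring of integers. I expect no real obstacle beyond the trivial-character caveat and the noncanonical choice of $\gamma_\psi^{1/D}$.
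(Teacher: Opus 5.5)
Your proof is correct and follows the same route as the paper. You invert $E(t)-1$ to get $\gamma_\psi = \ell(\pi_\psi) \equiv \pi_\psi \pmod{\pi_\psi^2}$, and you factor $h = T^k h_1$ to obtain $\ord_{\pi_\psi}\mathrm{sp}_\psi(h) \ge \ord_T(h)$, both exactly as the paper does. You additionally write out the substitution construction of $\mathrm{sp}_\psi$ on $\OT$ and the computation $\mathrm{sp}_\psi(T) = \pi_\psi$, which the paper leaves implicit, and you correctly note that uniqueness holds only once $\gamma_\psi^{1/D}$ is fixed and that $m=0$ is degenerate.
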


\begin{proof}
Since the Artin--Hasse exponential $E(t) \in 1 + t\Z_p[[t]]$ satisfies $E(t) - 1 = t + O(t^2)$, there exists a unique formal inverse function $H(T) \in T\Z_p[[T]]$ such that $E(H(T)) = 1 + T$. Since $\ord_p \pi_\psi > 0$, the series $H(T)$ converges at $T = \pi_\psi$. Set $\gamma_\psi := H(\pi_\psi) \in \Z_p[\zeta_{p^m}]$. By construction, $E(\gamma_\psi) = 1 + \pi_\psi$. Furthermore, $\gamma_\psi \equiv \pi_\psi \pmod{\pi_\psi^2}$, and so $\ord_{\pi_\psi}(\gamma_\psi) = 1$.

For the inequality, let $h(T) = \sum_{j \geq 0} h_j T^j \in \Z_p[[T]]$ and set $r := \ord_T(h)$. This means $h(T) = T^r u(T)$ for some $u \in \Z_p[[T]]$ with $T \nmid u(T)$. Specializing, we have $\mathrm{sp}_\psi(h) = \pi_\psi^r \cdot u(\pi_\psi)$. Since $u(\pi_\psi)$ converges to an element in $\Z_p[\zeta_{p^m}]$, $\ord_{\pi_\psi} u(\pi_\psi) \ge 0$. Thus $\ord_{\pi_\psi} \mathrm{sp}_\psi(h) \;\geq\; r = \ord_T(h)$.
\end{proof}

We extend the action of the specialization map to series $H \in 1 + s\Z_p[[T]][[s]]$ by specializing the coefficients. Thus, $H_\psi(s) := H(s)|_{T = \pi_\psi} \in 1 + s\Z_p[\zeta_{p^m}][[s]]$. In particular,
\[
C_{\bd, \psi}(f; s) = C_\bd(f, T; s)\big|_{T = \pi_\psi}, \qquad L_{\bd, \psi}(f; s) = L_\bd(f, T; s)\big|_{T = \pi_\psi}.
\]
We may now prove rationality. 

\begin{theorem}\label{thm: rationality}
$L_{\bd, \psi}(f; s) \in \Q(\zeta_{p^m})(s)$.
\end{theorem}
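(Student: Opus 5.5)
The plan is to prove rationality by combining $p$-adic meromorphy of the specialized $L$-function with the classical Dwork--Borel criterion: a power series over a number field that is $p$-adic meromorphic and satisfies suitable archimedean growth bounds is a rational function.

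\textbf{Step 1: specialize the meromorphic factorization.} By Corollary~\ref{cor: L is T-adic mero}, $L_\bd(f,T;s) = A(T,s)/B(T,s)$, where $A, B \in 1+s\Z_p[[T]][[s]]$ are $T$-adic entire. I will apply $\mathrm{sp}_\psi$ coefficientwise (Lemma~\ref{lem:specialization}). Since $\ord_{\pi_\psi}\mathrm{sp}_\psi(h) \ge \ord_T(h)$, a $T$-adic entire series, whose $s^k$-coefficients have $\ord_T$ growing faster than any linear function of $k$, specializes to a series whose coefficients have $\ord_{\pi_\psi}$ growing superlinearly. Hence $A_\psi(s)$ and $B_\psi(s)$ are $p$-adic entire in $1+s\Z_p[\zeta_{p^m}][[s]]$, and $L_{\bd,\psi}(f;s) = A_\psi(s)/B_\psi(s)$ is $p$-adic meromorphic on all of $\C_p$.

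\textbf{Step 2: archimedean bounds.} Since $\psi$ takes values in $\mu_{p^m}$, each $S_{\bd,\psi}(f,k)$ lies in $\Z[\zeta_{p^m}]$. Every complex conjugate of $S_{\bd,\psi}(f,k)$ has absolute value at most $\prod_i (q^{kd_i}-1) \le q^{kN}$. Alternatively, the Euler product of Theorem~\ref{thm:euler} specialized at $T=\pi_\psi$ expresses $L_{\bd,\psi}$ as a product of factors $(1-\zeta s^{\deg_\bd(\bar x)})^{-\deg(\bar x)/\deg_\bd(\bar x)}$ with $\zeta$ a root of unity. Either way, $L_{\bd,\psi}(f;s) \in 1+s\Z[\zeta_{p^m}][[s]]$, and all of its complex conjugates converge on a disc of radius $q^{-N}$.

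\textbf{Step 3: apply Borel--Dwork.} I will apply the rationality criterion of Borel--Dwork, in the form valid over a number field (e.g.\ Koblitz's or Dwork's version for $\Q(\zeta_{p^m})$). It requires three inputs: integral coefficients; archimedean radii of convergence $R_v$ at all complex places $v$; and $p$-adic meromorphy on a disc of radius $R_p$ at a $p$-adic place, with $\prod R_v \cdot R_p > 1$. Here $R_p=\infty$, so the product condition holds trivially, and the criterion yields $L_{\bd,\psi}(f;s) \in \Q(\zeta_{p^m})(s)$.

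The main obstacle is Step 1: I must justify that entireness in the $T$-adic sense survives specialization. The key inequality $\ord_{\pi_\psi}\mathrm{sp}_\psi(h)\ge \ord_T(h)$ does exactly this, after renormalizing $\ord_{\pi_\psi}$ to $\ord_p$ by the factor $1/\varphi(p^m)$. A minor further check is that the field of coefficients is $\Q(\zeta_{p^m})$ rather than the larger ring appearing in Lemma~\ref{lem:specialization}. This holds because $A$ and $B$ have coefficients in $\Z_p[[T]]$, so their specializations land in $\Z_p[\zeta_{p^m}]$, and the global coefficients of $L_{\bd,\psi}$ lie in $\Z[\zeta_{p^m}]$.
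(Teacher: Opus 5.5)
Your proposal is correct and follows essentially the same route as the paper: you specialize the $T$-adic meromorphic factorization $L_\bd = A/B$ using $\ord_{\pi_\psi}\mathrm{sp}_\psi(h) \ge \ord_T(h)$ to get $p$-adic meromorphy, then combine this with the archimedean bound $|S_{\bd,\psi}(f,k)| \le \prod_i (q^{kd_i}-1)$ and Dwork's rationality criterion. Your explicit checks of integrality of the coefficients (via the specialized Euler product) and of convergence at every complex embedding are hypotheses of the criterion that the paper leaves implicit, but they do not change the argument.
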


\begin{proof}
We first prove $p$-adic meromorphy via specialization. By Corollary \ref{cor: L is T-adic mero}, we may write $L_\bd(f, T; s) = A(T, s) / B(T, s)$, where $A(T, s) = \sum_{j \geq 0} a_j(T) s^j$ and $B(T, s)$ are $T$-adic entire series with coefficients in $\Z_p[[T]]$. By Lemma~\ref{lem:specialization}, for every $j \geq 0$:
\[
\ord_{\pi_\psi}\!\big( \mathrm{sp}_\psi(a_j) \big) \;\geq\; \ord_T(a_j)
\]
Consequently, $A(\pi_\psi, s)$ is $\pi_\psi$-adic entire in $s$. Similar for $B(\pi_\psi, s)$. Hence,
\[
L_{\bd, \psi}(f; s) = L_\bd(f, T; s)\big|_{T = \pi_\psi} = \frac{A(\pi_\psi, s)}{B(\pi_\psi, s)}
\]
is $p$-adic meromorphic.

We now have that $L_{\bd, \psi}(f; s)$ is a power series with coefficients in a finite extension field of $\Q$, it is $p$-adic meromorphic, and it is complex analytic in a nonzero disk around the origin since 
\[
|S_{\bd,\psi}(f,k)|_{\C} \leq \prod_i (q^{kd_i}-1).
\]
By Dwork's rationality criterion \cite{Dwork-rationalityofzeta-1960}, these three properties imply $L_{\bd, \psi}(f; s) \in \Q(\zeta_{p^m})(s)$.
\end{proof}

\section{Structure of the $C$-function}\label{sec:meromorphy}

In this section we refine Lemma \ref{lem: C product over eigenvalues}, which wrote the $C$-function as a product over eigenvalues. This refinement will be helpful in Section~\ref{sec:noh} when we study Newton-over-Hodge estimates. In the following, $\varphi$ is Euler's totient and $\mu$ the M\"obius function. 

We begin with some technical lemmas. For any integer $r$, define the set 
\[
W_{k,r} := \{ \bar y \in (\overline{\F}_q^\times)^N \mid \sigma^{-r}(\bar y^{q^k}) = \bar y \}.
\] 

\begin{lemma}\label{lem: G well-defined}
For every $\bar y \in W_{k,r}$, its Teichm\"uller lift $\hat y$ satisfies $G(\hat y) \in \Q_{q^k}$.
\end{lemma}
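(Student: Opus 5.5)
The plan is to show that $G(\hat y)$ is fixed by the $k$-th power of the Frobenius automorphism of $\Q_{q^k}$ over $\Q_q$-like structure. More precisely, we show it is fixed by the automorphism of the maximal unramified extension that raises Teichm\"uller units to the $q^k$-th power; its fixed field is $\Q_{q^k}$. Denote by $\phi$ the lift of the $p$-power Frobenius to the maximal unramified extension $\Q_p^{\mathrm{ur}}$, so that $\phi(\hat z) = \hat z^{p}$ for every Teichm\"uller lift $\hat z$. The element $\hat y$ has finitely many coordinates, each a root of unity of order prime to $p$, so $G(\hat y)$ lies in some finite unramified extension and in $\Q_p^{\mathrm{ur}}$. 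The fixed field of $\phi^{ak}$ in $\Q_p^{\mathrm{ur}}$ is $\Q_{q^k}$. It therefore suffices to check that $\phi^{ak}(G(\hat y)) = G(\hat y)$.

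\textbf{Step 1: commute the Frobenius past $G$.} The coefficients of $G$ are sums of the $c_u$, which are Teichm\"uller units in $\Z_q$. Hence $\phi^{ak}$ fixes each coefficient, since $\phi^{a}$ is the identity on $\Q_q$. Teichm\"uller lifting is multiplicative and commutes with Frobenius. Therefore
\[
\phi^{ak}(G(\hat y)) = G\big(\phi^{ak}(\hat y)\big) = G\big(\hat y^{\,q^k}\big) = G\big(\widehat{\bar y^{\,q^k}}\big).
\]

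\textbf{Step 2: use the defining relation of $W_{k,r}$.} The condition $\sigma^{-r}(\bar y^{q^k}) = \bar y$ is equivalent to $\bar y^{q^k} = \sigma^{r}(\bar y)$, a permutation of the coordinates of $\bar y$. Teichm\"uller lifting is applied coordinatewise, so it commutes with such permutations. This gives $\widehat{\bar y^{\,q^k}} = \sigma^r(\hat y)$.

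\textbf{Step 3: apply $\sigma$-invariance.} By \cite[Lemma 2.2]{partial_Haessig}, $G$ is $\sigma$-invariant; this can also be checked directly from the definition of $G$ as a sum over $j \bmod d$, since $\sigma$ shifts $j$ to $j+1$. Hence $G(\sigma^r \hat y) = G(\hat y)$. Combining the three steps, $\phi^{ak}(G(\hat y)) = G(\hat y)$, so $G(\hat y) \in \Q_{q^k}$.

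The only point requiring care is the sign and direction convention for $\sigma$ acting on points versus functions, namely that $(\sigma\xi)(y) = \xi(\sigma y)$. This is harmless here because invariance under $\sigma$ implies invariance under every power $\sigma^{\pm r}$. No other step should pose an obstacle.
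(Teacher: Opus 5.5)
Your proposal is correct and is essentially the paper's proof: both apply the $ak$-th power of Frobenius, which fixes the $\Z_q$-coefficients of $G$ and sends $\hat y$ to $\hat y^{q^k} = \sigma^r \hat y$. The $\sigma$-invariance of $G$ then shows that $G(\hat y)$ is Frobenius-fixed and so lies in $\Q_{q^k}$. You also explicitly justify the fixed-field identification and the compatibility of Teichm\"uller lifting with the coordinate permutation, which the paper leaves implicit.
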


\begin{proof}
Let $\bar y \in W_{k,r}$. By definition, $\bar y^{q^k} = \sigma^r \bar y$. Let $\hat y$ be the Teichm\"uller lift of $\bar y$, then $\hat y$ also satisfies $\hat y^{q^k} = \sigma^r \hat y$. 

Let $\tau \in \text{Gal}(\mathbb{Q}_p^{\text{unr}} / \mathbb{Q}_p)$ be the Frobenius automorphism. Then $\tau^{ak}$ acts on Teichmüller units by the $q^k$-power map. Since the coefficients of $G$ lie in $\mathbb{Z}_q$, they are fixed by $\tau^a$. Thus 
\[
\tau^{ak} G(\hat{y}) = G(\hat{y}^{q^k}) = G(\sigma^r \hat{y}) = G(\hat y),
\]
where the last equality follows from $G$ being $\sigma$-invariant. This proves the lemma.
\end{proof}

We may now define the partial exponential sum evaluated over $W_{k,r}$, which is well-defined by Lemma \ref{lem: G well-defined}:
\[
S_\bd^{(r)}(f, k; T) := \sum_{\substack{\bar y \in W_{k,r} \\ \hat y = \operatorname{Teich}(\bar y)}} (1 + T)^{\Tr_{\Q_{q^k}/\Q_p} (G(\hat y))}.
\]

\begin{lemma}\label{lem: Sbd coprime}
For every integer $b$ coprime to $d$, we have $S_\bd^{(br)}(f, k; T) = S_\bd^{(r)}(f, k; T)$. 
\end{lemma}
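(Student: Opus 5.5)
The plan is to construct an explicit bijection $W_{k,r} \to W_{k,br}$ that commutes with Teichm\"uller lifting and preserves the value of $G$. The two sums $S_\bd^{(r)}$ and $S_\bd^{(br)}$ then agree term by term. The natural candidate is the permutation of variables that rescales the second index by $b$. Since $\gcd(b,d)=1$ and $d_i \mid d$, we have $\gcd(b,d_i)=1$ for every $i$. Hence $j \mapsto bj$ is a permutation of $\Z/d_i\Z$. Define $\phi_b$ on $(\overline{\F}_q^\times)^N$, and likewise on points with coordinates in any ring, by
\[
(\phi_b y)_{i,\, bj \bmod d_i} := y_{i,\, j \bmod d_i}.
\]
This is a coordinate permutation. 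It is therefore a bijection, with inverse $\phi_{b'}$ where $b b' \equiv 1 \pmod d$. It commutes with raising each coordinate to the $q^k$-th power, and it commutes with taking Teichm\"uller lifts coordinatewise.

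The first key step is the conjugation relation $\phi_b \circ \sigma = \sigma^b \circ \phi_b$ on points. Here $\sigma$ acts on points by shifting the second index by one, with the sign fixed by the convention $\sigma(y_{i,j}) = y_{i,j+1}$. Rescaling indices by $b$ turns a shift by $1$ into a shift by $b$, and iterating gives $\phi_b \circ \sigma^r = \sigma^{br} \circ \phi_b$. So if $\bar y^{q^k} = \sigma^r \bar y$, then
\[
(\phi_b \bar y)^{q^k} = \phi_b(\bar y^{q^k}) = \phi_b(\sigma^r \bar y) = \sigma^{br}\phi_b \bar y .
\]
Thus $\phi_b$ maps $W_{k,r}$ into $W_{k,br}$. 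The same argument applied to $\phi_{b'}$ maps $W_{k,br}$ into $W_{k,b'br} = W_{k,r}$, using that $\sigma$ has order $d$. Hence $\phi_b\colon W_{k,r} \to W_{k,br}$ is a bijection.

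The second key step is $G(\phi_b \hat y) = G(\hat y)$. By definition,
\[
G(\phi_b y) = \sum_{j=0}^{d-1} f\big((\phi_b y)_{1,j \bmod d_1}, \dots, (\phi_b y)_{n,j \bmod d_n}\big) = \sum_{j=0}^{d-1} f\big(y_{1,b'j \bmod d_1}, \dots, y_{n,b'j \bmod d_n}\big).
\]
Because $d_i \mid d$, reducing $b'j$ modulo $d$ and then modulo $d_i$ agrees with reducing $b'(j \bmod d_i)$ modulo $d_i$. Since $j \mapsto b'j$ permutes $\Z/d\Z$, the right-hand side is just a reindexing of the sum defining $G(y)$. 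Combining this with Lemma~\ref{lem: G well-defined}, each term $(1+T)^{\Tr_{\Q_{q^k}/\Q_p}(G(\hat y))}$ is carried to the corresponding term for $\phi_b \hat y$, and the lemma follows.

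I expect the main obstacle to be purely bookkeeping: getting the direction of the shift in $\sigma$ and the choice of $b$ versus $b'$ consistent. If the conventions come out reversed, one replaces $\phi_b$ by $\phi_{b'}$, which leaves the argument unchanged. The one substantive point is the compatibility of the index rescaling with reduction modulo each $d_i$, which holds because every $d_i$ divides $d$.
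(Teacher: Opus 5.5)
Your proof is correct and is the paper's argument: your $\phi_b$, given by $(\phi_b y)_{i,j} = y_{i,b'j}$, is exactly the paper's coordinate change $z_{i,j} = y_{i,jb^{-1}}$, and you check $G(\phi_b y) = G(y)$ by the same reindexing of $j \mapsto b'j$ over $\Z/d\Z$. You also spell out the intertwining $\phi_b \circ \sigma^r = \sigma^{br} \circ \phi_b$, which shows the map carries $W_{k,r}$ bijectively onto $W_{k,br}$; the paper only asserts this, and the relation in fact holds for either sign convention of the shift.
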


\begin{proof}
Since $b$ is coprime to $d = \lcm(d_1, \dots, d_n)$, it is coprime to every $d_i$. Let $b^{-1}$ denote the inverse of $b$ modulo $d$. Note that the coordinate mapping $y \mapsto z$ defined by
\[
z_{i, j} := y_{i, j b^{-1} \bmod d_i}
\]
is a bijection from $W_{k,r}$ to $W_{k,br}$. Next, we evaluate $G$ on the coordinate change: 
\[
G(z) = \sum_{j=0}^{d-1} f\!\left(z_{1, j \bmod d_1}, \dots, z_{n, j \bmod d_n}\right) = \sum_{j=0}^{d-1} f\!\left(y_{1, j b^{-1} \bmod d_1}, \dots, y_{n, j b^{-1} \bmod d_n}\right) = G(y).
\]
The result follows. 
\end{proof}

\begin{lemma}\label{lem: P^r det}
For every $b$ coprime to $d$, we have $\det(q^k I - P^{br}) = \det(q^k I - P^r)$.
\end{lemma}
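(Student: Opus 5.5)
The plan is to compute both determinants explicitly from the cycle structure of $P$, and then check that this cycle structure is unchanged when $P^r$ is replaced by $P^{br}$.

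\textbf{Decomposing $P$ into cycles.} The matrix $P$ is block diagonal with respect to the splitting $\Z^N = \bigoplus_{i=1}^n \Z^{d_i}$. The $i$-th block $P_i$ is the cyclic shift on the coordinates indexed by $\Z/d_i\Z$, that is, a single $d_i$-cycle. Hence
\[
\det(q^k I - P^{r}) = \prod_{i=1}^n \det(q^k I - P_i^{r}).
\]
So it suffices to treat a single cyclic shift $P_i$ of order $d_i$.

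\textbf{Structure of $P_i^r$.} The matrix $P_i^r$ is the permutation $j \mapsto j + r$ on $\Z/d_i\Z$. Put $g = \gcd(r, d_i)$. This permutation splits into $g$ disjoint cycles, each of length $d_i/g$. The characteristic polynomial of an $\ell$-cycle permutation matrix is $x^\ell - 1$. Therefore
\[
\det(q^k I - P_i^r) = \bigl(q^{k d_i/\gcd(r,d_i)} - 1\bigr)^{\gcd(r,d_i)}.
\]
This also recovers Lemma \ref{lem:det_identity} in the case $r=1$.

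\textbf{Comparing $r$ with $br$.} Since $b$ is coprime to $d$, it is coprime to each $d_i$. It follows that $\gcd(br, d_i) = \gcd(r, d_i)$. The displayed formula therefore gives equal values for $r$ and for $br$, block by block, and multiplying over $i$ gives the lemma.

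An alternative route is to observe that $P^{br}$ and $P^r$ generate the same cyclic subgroup. They are then conjugate by a permutation matrix, namely the relabeling $j \mapsto jb^{-1}$ already used in Lemma \ref{lem: Sbd coprime}, so their characteristic polynomials agree.

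There is no real obstacle here. The only point needing care is the cycle decomposition of a power of a cycle, together with the fact that the value of $r$ mod $d_i$ is all that matters. Negative $r$ is covered by the same computation, since $\gcd$ is taken with $|r|$ and $r$ enters only through $r \bmod d_i$.
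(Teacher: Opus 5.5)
Your proposal is correct and follows the paper's proof: split $P$ into the cyclic blocks $P_i$, observe that $P_i^r$ has $\gcd(r,d_i)$ cycles of length $d_i/\gcd(r,d_i)$, and use $\gcd(br,d_i)=\gcd(r,d_i)$ to match characteristic polynomials block by block. Your explicit formula $\bigl(q^{kd_i/\gcd(r,d_i)}-1\bigr)^{\gcd(r,d_i)}$ and the conjugation-by-$j\mapsto bj$ alternative are both valid; they only make explicit what the paper leaves implicit in the phrase ``same cycle type.''
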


\begin{proof}
The permutation matrix $P$ acts on the variables $y_{i,j}$ independently for each $i \in \{1, \dots, n\}$. Thus, $P$ decomposes into a block-diagonal matrix, $P = \mathrm{diag}(P_1, \dots, P_n)$, where each $P_i$ is a $d_i \times d_i$ permutation matrix defined by the shift $u_{i, j \bmod d_i} \mapsto u_{i, j-1 \bmod d_i}$, and so $P_i$ has order $d_i$.

For a fixed block $i$, $P_i^r$ decomposes into $\gcd(r, d_i)$ disjoint cycles, each having length $d_i / \gcd(r, d_i)$. By hypothesis, the integer $b$ is coprime to $d$, and thus coprime to $d_i$. Consequently, $\gcd(br, d_i) = \gcd(r, d_i)$, and so $P_i^{br}$ has the same cycle type as $P_i^r$. This means they have identical characteristic polynomials $\det(t I_{d_i} - P_i^{br}) = \det(t I_{d_i} - P_i^r)$. Hence,
\[
\det(tI - P^r) = \prod_{i=1}^n \det(tI_{d_i} - P_i^r) = \prod_{i=1}^n \det(tI_{d_i} - P_i^{rb}) = \det(tI - P^{rb}).
\]
Evaluating at $t = q^k$ gives the result. 
\end{proof}

\begin{lemma}\label{lem:twist_independence}
For every $j \in \Z$, $b \in (\Z/d\Z)^\times$, and $k \ge 1$, we have
\[ 
\Tr\!\big( \sigma^{jb} \circ \alpha_a^k \mid B \big) = \Tr\!\big( \sigma^{j} \circ \alpha_a^k \mid B \big). 
\]
\end{lemma}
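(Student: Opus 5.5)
The plan is to prove a twisted Dwork trace formula for every power $\sigma^{r}$, and then combine it with Lemmas~\ref{lem: Sbd coprime} and~\ref{lem: P^r det}.

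\textbf{Twisted trace formula.} The first step is to establish, for every integer $r$ and every $k \ge 1$,
\[
S_\bd^{(r)}(f, k; T) = \det(q^k I - P^{r}) \cdot \Tr\!\big( \sigma^{r} \circ \alpha_a^k \mid B \big).
\]
This generalizes Theorem~\ref{thm:trace}, which is the case $r = 1$ with $W_{k,1} = W_k$. The proof should follow \cite[Theorem 4.4]{partial_Haessig} verbatim, with $\sigma$ replaced by $\sigma^r$ and $P$ by $P^r$.

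First compute the trace on the basis $\{\pi^{w(u)}y^u\}$. We have $\sigma^r \alpha_a^k(y^u) = \sum_v (\text{coeff. of } y^{q^k v - u} \text{ in } F_a^{(k)}) \, y^{P^r v}$. The diagonal entries are those with $P^r v = u$, so
\[
\Tr\big(\sigma^r \alpha_a^k\big) = \sum_{u} \big[\, y^{(q^k P^{-r} - I)u} \,\big] F_a^{(k)},
\]
where $[\,y^{m}\,]F$ denotes the coefficient of $y^{m}$ in $F$.

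Next, the standard character-sum identity gives
\[
\sum_{\bar y \in W_{k,r}} \hat y^{\,m} = |W_{k,r}| \cdot \mathbf 1\big[m \in (q^k P^{-r} - I)^{t}\Z^N\big],
\]
with $|W_{k,r}| = |\det(q^k P^{-r} - I)| = \det(q^k I - P^{r})$. Applied to $F_a^{(k)}$, this gives
\[
\sum_{\bar y \in W_{k,r}} F_a^{(k)}(\hat y) = \det(q^k I - P^{r})\, \Tr\big(\sigma^r \alpha_a^k\big).
\]
The transpose issue is harmless because $P$ is a permutation matrix, so $P^{-t} = P$; the indexing conventions just need care.

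Finally, identify $\sum_{W_{k,r}} F_a^{(k)}(\hat y)$ with $S^{(r)}_\bd(f,k;T)$. For $\hat y \in W_{k,r}$, the splitting lemma~\ref{lem:t_adic_splitting} applies to each monomial orbit: $\hat y^{q^k} = \sigma^r \hat y$ and the $\sigma$-invariance of $G$ let the product $\prod_{j<ak} E(\pi (\cdot)^{p^j})$ collapse to $(1+T)^{\Tr_{\Q_{q^k}/\Q_p} G(\hat y)}$, exactly as for $r=1$. Lemma~\ref{lem: G well-defined} is what makes this trace meaningful.

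\textbf{Concluding.} Apply the twisted formula with $r = j$ and with $r = jb$. By Lemma~\ref{lem: Sbd coprime} the left-hand sides agree, and by Lemma~\ref{lem: P^r det} the determinants agree. The determinant $\det(q^kI - P^{j}) = \prod_i \prod_{\text{cycles}} (q^{k\ell} - 1)$ is a nonzero integer, since $q^k > 1$ exceeds every eigenvalue modulus. Dividing by it gives $\Tr(\sigma^{jb}\alpha_a^k \mid B) = \Tr(\sigma^j \alpha_a^k \mid B)$.

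\textbf{Main obstacle.} I expect the hard part to be the twisted trace formula, specifically the bookkeeping in the Dwork character-sum step. One must check that $W_{k,r}$ is a finite group of the stated order. One must also check that $F_a^{(k)}(\hat y)$ correctly equals the exponential of the trace of $G$ when the Frobenius permutes coordinates via $\sigma^r$ rather than $\sigma$. Since the $r = 1$ case is cited as identical to \cite{partial_Haessig}, I would rerun that argument with $\sigma^r$ and verify that no step used that $\sigma$ generates $\langle \sigma \rangle$.
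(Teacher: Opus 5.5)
Your proposal is correct and is the paper's argument: the paper also extends the trace formula to $\sigma^r$, namely $S_\bd^{(r)}(f,k;T) = \det(q^kI-P^r)\,\Tr(\sigma^r\circ\alpha_a^k\mid B)$, and concludes from Lemmas~\ref{lem: Sbd coprime} and~\ref{lem: P^r det}; your sketch of the twisted formula gives more detail than the paper does. One small slip: the characters trivial on $W_{k,r}$ are exactly the $y^{m}$ with $m \in (q^kP^{-r}-I)\Z^N$ (no transpose), whereas $(q^kP^{-r}-I)^{t}\Z^N = (q^kP^{r}-I)\Z^N$ is a different lattice in general (e.g.\ for a single block with $d_1=3$), so the transpose is not harmless as you claim, although it would only replace $\sigma^r$ by $\sigma^{-r}$, which does not affect the lemma since $-1 \in (\Z/d\Z)^\times$.
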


\begin{proof}
Generalizing the $T$-adic Dwork trace formula (Theorem~\ref{thm:trace}) using $\sigma^r$ instead of $\sigma$ gives:
\[
S_\bd^{(r)}(f, k; T) = \det\!\big(q^k I - P^r\big)\, \Tr\!\big(\sigma^r \circ \alpha_a^k \mid B\big).
\]
The result follows from Lemmas \ref{lem: Sbd coprime} and \ref{lem: P^r det}.
\end{proof}

We now recall the setup of Lemma \ref{lem: C product over eigenvalues}. Denote by $K := \operatorname{Frac}(\OT)$ the fraction field of $\OT$, and let $\Omega$ be the completion of an algebraic closure of $K$. Define the generalized eigenspace $V_\lambda$ of $\alpha_a$ on $B \widehat{\otimes}_{\mathcal{O}_T} \Omega$:
\[
V_\lambda := \bigcup_{j \ge 1} \ker\big((\alpha_a - \lambda)^j \,\big|\, B \widehat{\otimes}_{\mathcal{O}_T} \Omega\big).
\]
By Lemma~\ref{lem:commutation} the shift $\sigma$ commutes with $\alpha_a$, so each $V_\lambda$ is $\sigma$-stable. Also, $m_\lambda(\sigma) := \Tr(\sigma \mid V_\lambda) \in \Z$. The operator $\sigma$ satisfies $\sigma^d = 1$, and so we may write its eigenspace decomposition
\[
V_\lambda = \bigoplus_{\zeta \in \mu_d} V_\lambda^{(\zeta)}, \qquad d_{\lambda, \zeta} := \dim_\Omega V_\lambda^{(\zeta)}.
\]
Then $\Tr(\sigma^j \mid V_\lambda) = \sum_{\zeta \in \mu_d} d_{\lambda, \zeta}\, \zeta^j$ for all $j \in \Z$.

\begin{lemma}\label{lem:isotypic_balance}
For every nonzero eigenvalue $\lambda$ of $\alpha_a$, the multiplicity $d_{\lambda,\zeta}$ depends only on the order of the root of unity $\zeta$. That is, if $b \in (\Z/d\Z)^\times$ then
$$d_{\lambda,\zeta^b} = d_{\lambda,\zeta}$$
for every $d$-th root of unity $\zeta$.
\end{lemma}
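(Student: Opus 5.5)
The plan is to pass from the global identity of Lemma~\ref{lem:twist_independence} to an identity for each eigenvalue, and then invert the character sum over $\mu_d$.

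\textbf{Step 1: trace identities for every power of $\sigma$.} Fix $j \in \Z$. Since $\sigma^j$ commutes with $\alpha_a$, the argument of Lemma~\ref{lem: C product over eigenvalues} applies verbatim with $\sigma$ replaced by $\sigma^j$. On $V_\lambda$, write $\alpha_a = \lambda I + N_\lambda$ with $N_\lambda$ nilpotent and commuting with $\sigma^j$. Then $\sigma^j N_\lambda^i$ is nilpotent for $i \ge 1$. This gives
\[
\Tr\big(\sigma^j \circ \alpha_a^k \mid B\big) = \sum_{\lambda \neq 0} \Tr(\sigma^j \mid V_\lambda)\,\lambda^k \qquad (k \ge 1).
\]
Combining this with Lemma~\ref{lem:twist_independence}, for every $b \in (\Z/d\Z)^\times$ and all $k \ge 1$ we get
\[
\sum_{\lambda \neq 0} \big(\Tr(\sigma^{jb} \mid V_\lambda) - \Tr(\sigma^j \mid V_\lambda)\big)\lambda^k = 0.
\]

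\textbf{Step 2: separate the eigenvalues.} This is the main obstacle: we must conclude from these power-sum identities that each coefficient $c_\lambda := \Tr(\sigma^{jb}\mid V_\lambda) - \Tr(\sigma^j\mid V_\lambda)$ vanishes. The eigenvalues are distinct and satisfy $\ord_T\lambda \to \infty$, with finitely many below any bound. So the vanishing of all power sums is equivalent to
\[
\prod_{\lambda}(1-\lambda s)^{c_\lambda} = \exp\Big(-\sum_k \big(\textstyle\sum_\lambda c_\lambda \lambda^k\big) s^k/k\Big) = 1.
\]
Here the $c_\lambda$ lie in $\Z[\zeta_d]$, so the exponents are not integers. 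I would therefore argue directly. Let $\lambda_1$ be an eigenvalue of minimal valuation with $c_{\lambda_1} \ne 0$, and let $\lambda_1,\dots,\lambda_r$ be the finitely many eigenvalues of that same valuation. Multiply the identity by $\lambda_1^{-k}$. The contributions from eigenvalues of strictly larger valuation tend to $0$ as $k\to\infty$. The finitely many remaining terms $\sum_{i\le r} c_{\lambda_i}(\lambda_i/\lambda_1)^k$ are quotients of units, and a standard Vandermonde argument shows they cannot tend to $0$ unless all $c_{\lambda_i} = 0$. Concretely, consider $k, k+1, \dots, k+r-1$; the Vandermonde determinant in the distinct units $\lambda_i/\lambda_1$ is a fixed nonzero constant. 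This contradicts $c_{\lambda_1}\neq 0$, so $c_\lambda = 0$ for all $\lambda$.

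\textbf{Step 3: invert over $\mu_d$.} We now have $\chi_\lambda(j) := \Tr(\sigma^j\mid V_\lambda) = \sum_{\zeta\in\mu_d} d_{\lambda,\zeta}\zeta^j$ with $\chi_\lambda(jb) = \chi_\lambda(j)$ for all $j$ and all units $b$. Fix $\zeta_0 \in \mu_d$. By orthogonality of characters on $\Z/d\Z$,
\[
d_{\lambda,\zeta_0} = \frac1d\sum_{j \in \Z/d\Z}\chi_\lambda(j)\zeta_0^{-j}.
\]
Apply this with $\zeta_0^b$ and substitute $j\mapsto jb^{-1}$, which permutes $\Z/d\Z$:
\[
d_{\lambda,\zeta_0^b} = \frac1d\sum_j \chi_\lambda(jb^{-1})\zeta_0^{-j} = \frac1d\sum_j \chi_\lambda(j)\zeta_0^{-j} = d_{\lambda,\zeta_0},
\]
using $\chi_\lambda(jb^{-1})=\chi_\lambda(j)$ from Step 2. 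This is the claim. Since $\zeta^b$ for $b$ ranging over units is exactly the set of roots of unity of the same order as $\zeta$, the multiplicity depends only on the order of $\zeta$.
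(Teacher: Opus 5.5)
Your proof is correct. Its skeleton is the same as the paper's: decompose the twisted traces over the $V_\lambda$, invoke Lemma~\ref{lem:twist_independence}, separate the eigenvalues, then invert by character orthogonality on $\Z/d\Z$. The one genuine difference is how you separate the eigenvalues in Step~2.

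The paper packages the power-sum identities into the generating function $\sum_{\lambda\neq 0} e_\lambda\lambda/(1-\lambda s)=0$, multiplies by $(1-\lambda_0 s)$, and evaluates at $s=\lambda_0^{-1}$. This disposes of every $\lambda_0$ in one stroke. It tacitly uses that an identity of power series near $s=0$ persists at $s=\lambda_0^{-1}$, which generally lies outside the disk of convergence. Justifying this requires clearing the finitely many factors $1-\lambda s$ with $\ord_T\lambda\le\ord_T\lambda_0$, i.e.\ a meromorphic-continuation step.

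Your minimal-valuation argument needs no such continuation. After dividing by $\lambda_1^k$, every tail term has $\ord_T\ge k\delta$, where $\delta>0$ is the gap to the next eigenvalue valuation. Inverting the fixed Vandermonde matrix in the distinct units $u_i=\lambda_i/\lambda_1$ then forces $c_{\lambda_i}u_i^k\to 0$. This is impossible when $c_{\lambda_i}\neq 0$, since $\ord_T(c_{\lambda_i}u_i^k)=\ord_T(c_{\lambda_i})$ is constant and finite.

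You should state explicitly the two facts this uses. First, the eigenvalue valuations form a discrete set, so a minimal valuation with $c_\lambda\neq 0$ and a positive gap $\delta$ both exist. Second, $\ord_T c\ge 0$ for every $c\in\Z[\zeta_d]$ (indeed $\ord_T c=0$ for $c\neq 0$, since $\ord_T$ is trivial on $\Q$ inside $\Omega$); this is what makes the tail small. With those noted, your route costs a few more lines but is a self-contained argument with no meromorphy input.
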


\begin{proof}
By character orthogonality,
\[
d_{\lambda,\zeta} = \frac{1}{d} \sum_{j=0}^{d-1} \Tr(\sigma^j \mid V_\lambda) \zeta^{-j}.
\]
Fix $b \in (\Z/d\Z)^\times$. Reindexing the following sum via $r \equiv bj \pmod d$ gives
\[
d_{\lambda,\zeta^b} = \frac{1}{d} \sum_{j=0}^{d-1} \Tr(\sigma^j \mid V_\lambda) \zeta^{-bj} = \frac{1}{d} \sum_{r=0}^{d-1} \Tr(\sigma^{b^{-1}r} \mid V_\lambda) \zeta^{-r}.
\]
The lemma follows once we show $\Tr(\sigma^{b^{-1}r} \mid V_\lambda) = \Tr(\sigma^r \mid V_\lambda)$ for all $r \in \Z$ and $\lambda \neq 0$.

By Lemma~\ref{lem:twist_independence}, $\Tr(\sigma^{b^{-1}r} \circ \alpha_a^k \mid B) = \Tr(\sigma^r \circ \alpha_a^k \mid B)$ for all $k \ge 1$. Thus, $\Tr(\sigma^{b^{-1}r} \circ \alpha_a^k \mid B_\Omega) = \Tr(\sigma^r \circ \alpha_a^k \mid B_\Omega)$, which equals
\[
\sum_{\lambda \neq 0} \Tr(\sigma^{b^{-1}r} \mid V_\lambda) \lambda^k = \sum_{\lambda \neq 0} \Tr(\sigma^r \mid V_\lambda) \lambda^k.
\]
Set $e_\lambda := \Tr(\sigma^{b^{-1}r} \mid V_\lambda) - \Tr(\sigma^r \mid V_\lambda)$. Then for every $k \geq 1$, we have $\sum_{\lambda \neq 0} e_\lambda \lambda^k = 0$. We will show $e_\lambda = 0$ for every $\lambda$.

Define the generating function using the variable $s$:
\[
\sum_{k=1}^\infty \left( \sum_{\lambda \neq 0} e_\lambda \lambda^k \right) s^{k-1} = 0.
\]
Since $\ord_T \lambda \to \infty$, we may switch the order of summation to get:
\[
\sum_{\lambda \neq 0} \frac{e_\lambda \lambda}{1 - \lambda s} = 0.
\]
Now, fixing a nonzero eigenvalue $\lambda_0$, we rewrite this as:
\[
e_{\lambda_0} \lambda_0 + (1 - \lambda_0 s) \sum_{\lambda \neq 0, \lambda_0} \frac{e_\lambda \lambda}{1 - \lambda s} = 0.
\]
The left-hand side is $T$-adic meromorphic. Evaluating at $s = \lambda_0^{-1}$ gives $e_{\lambda_0} \lambda_0 = 0$, and so $e_{\lambda_0} = 0$. 
\end{proof}

By Lemma~\ref{lem:isotypic_balance} the following are well-defined: for each $m \mid d$ set
\[
c_{m, \lambda} := d_{\lambda, \zeta} \quad (\text{for any } \zeta \text{ of order } m), \qquad \text{and} \qquad E_m(T, s) := \prod_{\lambda \neq 0} (1 - \lambda s)^{c_{m, \lambda}}.
\]
Observe that each $E_m$ is $T$-adic entire.

\begin{theorem}\label{thm:structural_factorization}
Let $\mathcal S_d$ be the set of squarefree divisors of $d$. Then
\[
C_\bd(f, T; s) = \prod_{m \mid d} E_m(T, s)^{\mu(m)}
= \prod_{m \in \mathcal S_d} E_m(T, s)^{\mu(m)}.
\]
\end{theorem}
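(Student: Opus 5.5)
The plan is to compare exponents eigenvalue by eigenvalue. By Lemma~\ref{lem: C product over eigenvalues}, $C_\bd(f,T;s) = \prod_{\lambda\neq 0}(1-\lambda s)^{m_\lambda(\sigma)}$ with $m_\lambda(\sigma) = \Tr(\sigma\mid V_\lambda)$. By definition of $E_m$,
\[
\prod_{m\mid d} E_m(T,s)^{\mu(m)} = \prod_{\lambda\neq 0}(1-\lambda s)^{\sum_{m\mid d}\mu(m)c_{m,\lambda}},
\]
where regrouping the exponents is legitimate: each $\lambda$ contributes finitely many integer exponents, and there are only finitely many $\lambda$ with $\ord_T\lambda\le c$ for any $c$, so all products converge $T$-adically. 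It therefore suffices to prove, for each fixed $\lambda\neq 0$, the identity
\[
m_\lambda(\sigma) = \sum_{m\mid d}\mu(m)\,c_{m,\lambda}.
\]

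To prove this, I use the eigenspace decomposition $V_\lambda=\bigoplus_{\zeta\in\mu_d}V_\lambda^{(\zeta)}$. This gives $m_\lambda(\sigma)=\sum_{\zeta\in\mu_d} d_{\lambda,\zeta}\zeta$. Next I group the $\zeta$ by their exact order $m\mid d$. By Lemma~\ref{lem:isotypic_balance}, $d_{\lambda,\zeta}=c_{m,\lambda}$ is constant on the primitive $m$-th roots of unity, so
\[
m_\lambda(\sigma)=\sum_{m\mid d}c_{m,\lambda}\sum_{\zeta\ \text{primitive of order}\ m}\zeta .
\]
The inner sum is the Ramanujan sum $c_m(1)$, which equals $\mu(m)$. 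This is the classical identity obtained by Möbius inversion of $\sum_{e\mid m}\sum_{\ord\zeta=e}\zeta=\sum_{\zeta^m=1}\zeta=[m=1]$. This yields the first equality.

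The second equality is immediate, since $\mu(m)=0$ for non-squarefree $m$, and so those factors are $E_m^0=1$.

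I expect the only delicate point to be the bookkeeping in the infinite product: checking that $E_m$ converges and that rearranging the exponents is allowed. Both follow from $\ord_T\lambda\to\infty$, together with the fact that each $V_\lambda$ is finite-dimensional, which bounds $c_{m,\lambda}\le\dim V_\lambda$. All the real content sits in Lemma~\ref{lem:isotypic_balance}, which is already established.
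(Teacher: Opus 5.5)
Your proposal is correct and follows the paper's proof essentially verbatim. You compute $m_\lambda(\sigma)=\sum_{m\mid d}\mu(m)\,c_{m,\lambda}$ by grouping the $\sigma$-eigenvalues by order, using Lemma~\ref{lem:isotypic_balance} and $\sum_{\mathrm{ord}(\zeta)=m}\zeta=\mu(m)$, then substitute into Lemma~\ref{lem: C product over eigenvalues}; your added checks on convergence of the regrouped products and your M\"obius-inversion proof of the root-of-unity identity are correct details that the paper leaves implicit.
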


\begin{proof}
Grouping $\zeta$ by order and using the identity $\sum_{\zeta \text{ of order } m} \zeta =
\mu(m)$,
\[
m_\lambda(\sigma) = \Tr(\sigma \mid V_\lambda)
= \sum_{\zeta \in \mu_d} d_{\lambda, \zeta}\, \zeta
= \sum_{m \mid d} c_{m, \lambda} \!\!\sum_{\zeta \text{ of order } m}\!\! \zeta
= \sum_{m \mid d} \mu(m)\, c_{m, \lambda}.
\]
Then, by Lemma \ref{lem: C product over eigenvalues}, we have
\[
C_\bd(f, T; s) = \prod_{\lambda \neq 0} (1 - \lambda s)^{m_\lambda(\sigma)}
= \prod_{m \mid d}\Big( \prod_{\lambda \neq 0}(1 - \lambda s)^{c_{m, \lambda}} \Big)^{\mu(m)}
= \prod_{m \mid d} E_m(T, s)^{\mu(m)}.
\]
\end{proof}

\section{Polygon arithmetic}\label{sec: polygons}

In the study of $p$-adic and $T$-adic Newton polygons, we frequently stretch and combine polygons. To make these manipulations precise and easy to read, we establish a formal arithmetic of polygons. 

A \emph{polygon} $\mathcal{P}$ is a multiset of segments $S(\mathcal{P}) = \{ (\mu_i, h_i) \}_{i \in I}$ where $\mu_i \in \R$ is the slope and $h_i > 0$ is the horizontal length (or multiplicity) of the segment. To draw the polygon, one orders the segments such that the slopes are non-decreasing ($\mu_1 \le \mu_2 \le \dots$), then concatenate them end-to-end starting from the origin, each having horizontal length $h_i$ and slope $\mu_i$.

Let $\mathcal{P}$ be a polygon, and let $c \in \R$ be a real scalar. We define the following operations.

\begin{itemize}
    \item \textbf{Slope scaling ($c \cdot \mathcal{P}$):} The polygon obtained by multiplying every slope of $\mathcal{P}$ by $c$. The horizontal lengths remain unchanged. Its segment multiset is $\{ (c\mu_i, h_i) \}_{i \in I}$.
    \item \textbf{Slope shifting ($\mathcal{P} + c$):} The polygon obtained by adding $c$ to every slope of $\mathcal{P}$. Its segment multiset is $\{ (\mu_i + c, h_i) \}_{i \in I}$.
    \item \textbf{Polygon addition ($\mathcal{P}_1 \oplus \mathcal{P}_2$):} The polygon obtained by taking the multiset union of the segments of $\mathcal{P}_1$ and $\mathcal{P}_2$. If $f_1(x)$ and $f_2(x)$ are entire series with Newton polygons $\mathcal{P}_1$ and $\mathcal{P}_2$, then $\mathcal{P}_1 \oplus \mathcal{P}_2$ is the Newton polygon of their product $f_1(x) f_2(x)$.
    \item \textbf{Length scaling ($\mathcal{P}^{\oplus r}$):} For any real number $r > 0$ this denotes the polygon obtained by multiplying the horizontal length of every segment in $\mathcal{P}$ by $r$. Its segment multiset is $\{ (\mu_i, r h_i) \}$. 
    \item \textbf{Lower bounds ($\mathcal{P}_1 \ge \mathcal{P}_2$):} We say $\mathcal{P}_1 \ge \mathcal{P}_2$ if the polygon in $\R^2$ of $\mathcal{P}_1$ lies on or above the polygon $\mathcal{P}_2$ for all $x \ge 0$. If $\mathcal{P}_1$ has finite horizontal length $h$, we possibly extend its graph to match the horizontal length of $\mathcal{P}_2$ by setting the $y$-coordinates equal to $+\infty$ for $x > h$.
\end{itemize}

Notice that for an integer $k \ge 1$, length scaling coincides with repeated polygon addition: $\mathcal{P}^{\oplus k} = \mathcal{P} \oplus \dots \oplus \mathcal{P}$ ($k$ times). A fractional exponent such as $\mathcal{P}^{\oplus 1/2}$ represents a compression, in this case halving the horizontal length of every segment (slopes remain unchanged).

\begin{figure}[H]\label{fig: polygons}
    \centering
    \begin{tikzpicture}[scale=0.65, every node/.style={scale=0.9}]
        
        \begin{scope}[xshift=0cm, yshift=0cm]
            \draw[->, gray] (-0.5, 0) -- (3.5, 0) node[right, black] {$x$};
            \draw[->, gray] (0, -0.5) -- (0, 4.5) node[above, black] {$y$};
            \draw[step=1cm, lightgray, very thin, dashed] (0,0) grid (3,4);
            
            \draw[thick, blue] (0,0) -- (1,0) -- (2,2) node[right] {$\mathcal{P}_1$};
            \filldraw[blue] (0,0) circle (2pt) (1,0) circle (2pt) (2,2) circle (2pt);
            
            \draw[thick, red] (0,0) -- (1,1) node[above left] {$\mathcal{P}_2$};
            \filldraw[red] (0,0) circle (2pt) (1,1) circle (2pt);
            
            \node at (1.5, -1) {\textbf{(a)} Polygons $\mathcal{P}_2 \geq \mathcal{P}_1$};
        \end{scope}

        \begin{scope}[xshift=7.5cm, yshift=0cm]
            \draw[->, gray] (-0.5, 0) -- (4.5, 0) node[right, black] {$x$};
            \draw[->, gray] (0, -0.5) -- (0, 4.5) node[above, black] {$y$};
            \draw[step=1cm, lightgray, very thin, dashed] (0,0) grid (4,4);
            
            \draw[thick, purple] (0,0) -- (1,0) -- (2,1) -- (3,3);
            \filldraw[purple] (0,0) circle (2pt) (1,0) circle (2pt) (2,1) circle (2pt) (3,3) circle (2pt);
            
            \node[above left, purple] at (3,3) {$\mathcal{P}_1 \oplus \mathcal{P}_2$};
            \node at (2, -1) {\textbf{(b)} Polygon Addition};
        \end{scope}

        \begin{scope}[xshift=0cm, yshift=-6.5cm]
            \draw[->, gray] (-0.5, 0) -- (3.5, 0) node[right, black] {$x$};
            \draw[->, gray] (0, -0.5) -- (0, 4.5) node[above, black] {$y$};
            \draw[step=1cm, lightgray, very thin, dashed] (0,0) grid (3,4);
            
            \draw[thick, orange] (0,0) -- (2,2);
            \filldraw[orange] (0,0) circle (2pt) (1,1) circle (2pt) (2,2) circle (2pt);
            \node[below right, orange] at (2,2) {$\mathcal{P}_2^{\oplus 2}$};
            
            \draw[thick, teal] (0,0) -- (1,2);
            \filldraw[teal] (0,0) circle (2pt) (1,2) circle (2pt);
            \node[above left, teal] at (1.5,2) {$\mathcal{P}_2 + 1$};
            
            \node at (1.5, -1) {\textbf{(c)} Multiplication \& Shifting};
        \end{scope}
        
        \begin{scope}[xshift=7.5cm, yshift=-6.5cm]
            \draw[->, gray] (-0.5, 0) -- (3.5, 0) node[right, black] {$x$};
            \draw[->, gray] (0, -0.5) -- (0, 4.5) node[above, black] {$y$};
            \draw[step=1cm, lightgray, very thin, dashed] (0,0) grid (3,4);
            
            \draw[thick, magenta] (0,0) -- (1,0) -- (2,4);
            \filldraw[magenta] (0,0) circle (2pt) (1,0) circle (2pt) (2,4) circle (2pt);
            \node[above left, magenta] at (2,4) {$2 \cdot \mathcal{P}_1$};
            
            \draw[thick, black] (0,0) -- (0.5,0) -- (1,1);
            \filldraw[black] (0,0) circle (2pt) (0.5,0) circle (2pt) (1,1) circle (2pt);
            \node[below right, black] at (0,1.75) {$\mathcal{P}_1^{\oplus 1/2}$};
            
            \node at (1.5, -1) {\textbf{(d)} Scaling \& Compression};
        \end{scope}

    \end{tikzpicture}
    \caption{\textbf{(a)} $\mathcal{P}_1$ has segment set $\{(0, 1), (2, 1)\}$ and $\mathcal{P}_2$ has segment $\{(1,1)\}$. \textbf{(b)} $\mathcal{P}_1 \oplus \mathcal{P}_2$ has segments $\{(0,1), (1, 1), (2, 1)\}$. \textbf{(c)} $\mathcal{P}_2^{\oplus 2}$ has segments $\{(1, 1), (1, 1)\}$, and $\mathcal{P}_2 + 1$ has segment $\{(2, 1)\}$ \textbf{(d)} $2 \cdot \mathcal{P}_1$ has segments $\{(0, 1), (4, 1)\}$, and $\mathcal{P}_1^{\oplus 1/2}$ has segments $\{(0, 1/2), (2, 1/2)\}$.}
    \label{fig:polygon_arithmetic}
\end{figure}

\section{Newton-over-Hodge for the $C$-function}\label{sec:noh}

In this section we give a lower bound for the $T$-adic Newton polygons of the $E_m$ functions. Recall from Section \ref{sec:meromorphy} the space $B_\Omega$. The shift operator $\sigma$ acts on $B_\Omega$, and since $\sigma^d = 1$, we may define the eigenspace ($\zeta \in \mu_d$)
\[
B_\Omega^{(\zeta)} := \ker(\sigma - \zeta \mid B_\Omega).
\]
Define $\rho_\zeta := \frac1d \sum_{j=0}^{d-1} \zeta^{-j} \sigma^j$. 

\begin{lemma}
$\rho_\zeta: B_\Omega \rightarrow B_\Omega^{(\zeta)}$ is a surjective projection.
\end{lemma}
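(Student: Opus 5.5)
The plan is to verify directly the three properties: $\rho_\zeta$ is well defined on $B_\Omega$, its image lies in $B_\Omega^{(\zeta)}$, and it restricts to the identity on $B_\Omega^{(\zeta)}$. Together these give $\rho_\zeta^2=\rho_\zeta$ and surjectivity.

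\textbf{Well-definedness.} First note that $\sigma$ is a continuous (indeed isometric) automorphism of $B_\Omega$. It permutes the orthonormal basis $\{\pi^{w(u)}y^u\}_{u\in M}$ via $u\mapsto Pu$, since $w(Pu)=w(u)$ and $P\Delta=\Delta$. Hence each $\sigma^j$ is a bounded $\Omega$-linear endomorphism. The scalar $1/d$ is also harmless: $d$ is a nonzero integer and $\Omega$ has characteristic zero, so $1/d\in\Omega$. Therefore $\rho_\zeta$ is a well-defined bounded $\Omega$-linear endomorphism of $B_\Omega$.

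\textbf{Image lies in the $\zeta$-eigenspace.} Take $x\in B_\Omega$ and compute
\[
\sigma\rho_\zeta x=\frac1d\sum_{j=0}^{d-1}\zeta^{-j}\sigma^{j+1}x
=\zeta\cdot\frac1d\sum_{j=0}^{d-1}\zeta^{-(j+1)}\sigma^{j+1}x .
\]
Because $\sigma^d=1$ and $\zeta^d=1$, reindexing modulo $d$ shows the last sum equals $\rho_\zeta x$. Hence $\sigma\rho_\zeta x=\zeta\rho_\zeta x$, that is, $\rho_\zeta x\in B_\Omega^{(\zeta)}$.

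\textbf{Identity on the eigenspace.} Take $x\in B_\Omega^{(\zeta)}$. Then $\sigma^jx=\zeta^jx$, so
\[
\rho_\zeta x=\frac1d\sum_{j=0}^{d-1}\zeta^{-j}\zeta^jx=x .
\]
Thus $\rho_\zeta$ restricts to the identity on $B_\Omega^{(\zeta)}$. This gives surjectivity onto $B_\Omega^{(\zeta)}$, and combined with the previous step it gives $\rho_\zeta^2=\rho_\zeta$, so $\rho_\zeta$ is a projection.

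\textbf{Main obstacle.} The only point needing care is the first step: confirming that $\sigma$ extends continuously to the completed tensor product $B\widehat\otimes_{\OT}\Omega$. This follows from the fact that $\sigma$ permutes an orthonormal basis, via the invariance $w(Pu)=w(u)$ recorded in Section~\ref{sec:Tadic}. Everything else is the standard finite-group averaging argument.
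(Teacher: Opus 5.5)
Your proposal is correct and follows the paper's proof: the two computations, that $\sigma\rho_\zeta x=\zeta\rho_\zeta x$ by reindexing and that $\rho_\zeta$ fixes $B_\Omega^{(\zeta)}$ pointwise, are exactly the paper's argument. Your well-definedness step ($\sigma$ permutes the orthonormal basis because $w(Pu)=w(u)$, and $1/d\in\Omega$) and your remark that the reindexing uses $\sigma^d=1$ and $\zeta^d=1$ spell out points the paper leaves implicit.
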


\begin{proof}
Let $x \in B_\Omega$. Applying $\sigma$, we have
\[
\sigma(\rho_\zeta(x)) = \frac{1}{d} \sum_{j=0}^{d-1} \zeta^{-j} \sigma^{j+1}(x) = \zeta \!\left( \frac{1}{d} \sum_{j=0}^{d-1} \zeta^{-(j+1)} \sigma^{j+1}(x) \right) =  \zeta \rho_\zeta(x).
\]
Hence, $\rho_\zeta(x) \in B_\Omega^{(\zeta)}$. 

Next, let $y \in B_\Omega^{(\zeta)}$. By definition, $\sigma(y) = \zeta y$, and iterating this gives $\sigma^j(y) = \zeta^j y$. Thus
\[
\rho_\zeta(y) = \frac{1}{d} \sum_{j=0}^{d-1} \zeta^{-j} \sigma^j(y) = \frac{1}{d} \sum_{j=0}^{d-1} \zeta^{-j} \zeta^j y = \frac{1}{d} \sum_{j=0}^{d-1} y = y.
\]
\end{proof}

Fix $m \mid d$, and define the space
\[
B_\Omega^{(m)} := \bigoplus_{\mathrm{ord}(\zeta) = m} B_\Omega^{(\zeta)}.
\]
Define the projection map $\rho_m : B_\Omega \rightarrow B_\Omega^{(m)}$ by
\[
\rho_m := \sum_{\mathrm{ord}(\zeta) = m} \rho_\zeta = \frac1d \sum_{j=0}^{d-1} b_m(j)\, \sigma^j \in \frac{1}{d}\Z[\sigma],
\]
where 
\[
b_m(j) := \sum_{\mathrm{ord}(\zeta) = m} \zeta^{-j} \in \Z.
\]
Since $\sigma$ and $\alpha_a$ commute,  $\alpha_a$ is stable on $B_\Omega^{(m)}$. Define
\[
D_m(T, s) := \det(I - s \alpha_a \mid B_\Omega^{(m)}).
\]

\begin{lemma}\label{lem:Dm_power}
For each $m \mid d$, we have $D_m(T, s) = E_m(T, s)^{\varphi(m)}$ 
\end{lemma}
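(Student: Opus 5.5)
The plan is to compute $D_m$ through its eigenvalue factorization on the generalized eigenspaces of $\alpha_a$, and then feed in Lemma~\ref{lem:isotypic_balance}.

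\emph{Step 1: decompose the eigenspaces.} Since $\alpha_a$ is completely continuous on $B$ (Theorem~\ref{thm:complete_continuity}), the Fredholm determinant of $\alpha_a$ on $B_\Omega$ is $\prod_{\lambda\neq 0}\det(I-s\alpha_a\mid V_\lambda)$. Because $\sigma$ commutes with $\alpha_a$, each $V_\lambda$ is $\sigma$-stable. Applying the projection $\rho_m$, which commutes with $\alpha_a$ and lies in $\frac1d\Z[\sigma]$, I expect $B_\Omega^{(m)}$ to inherit a generalized eigenspace decomposition with pieces
\[
V_\lambda\cap B_\Omega^{(m)} = \rho_m(V_\lambda) = \bigoplus_{\mathrm{ord}(\zeta)=m} V_\lambda^{(\zeta)}.
\]

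\emph{Step 2: check complete continuity on the summand.} The restriction $\alpha_a|_{B_\Omega^{(m)}}$ equals $\rho_m\alpha_a\rho_m$. The operator $\rho_m$ is a bounded idempotent, up to the factor $1/d$, which is a $p$-adic unit issue only if $p\mid d$; this does not matter over $\Omega$. Hence $\alpha_a|_{B_\Omega^{(m)}}$ is completely continuous on a closed complemented subspace. Its Fredholm determinant then equals the product of $\det(I-s\alpha_a)$ over its generalized eigenspaces. Equivalently, one can use
\[
\log D_m = -\sum_k \Tr(\rho_m\alpha_a^k)\frac{s^k}{k}
\]
and expand $\Tr(\rho_m\alpha_a^k\mid B_\Omega)$ as $\frac1d\sum_j b_m(j)\Tr(\sigma^j\alpha_a^k)$. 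The argument of Lemma~\ref{lem: C product over eigenvalues} applied to $\sigma^j$ then gives $\sum_\lambda \lambda^k \Tr(\rho_m\mid V_\lambda)$. This trace-formula route is cleaner, and I will use it.

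\emph{Step 3: count dimensions.} We have $\Tr(\rho_m\mid V_\lambda)=\sum_{\mathrm{ord}(\zeta)=m}d_{\lambda,\zeta}$. By Lemma~\ref{lem:isotypic_balance} each term equals $c_{m,\lambda}$, and there are $\varphi(m)$ such $\zeta$, so the total is $\varphi(m)c_{m,\lambda}$. Therefore
\[
D_m(T,s)=\exp\Big(-\sum_k\sum_\lambda \varphi(m)c_{m,\lambda}\lambda^k\frac{s^k}{k}\Big)=\prod_\lambda(1-\lambda s)^{\varphi(m)c_{m,\lambda}}=E_m^{\varphi(m)}.
\]
The interchange of sums is justified as in Lemma~\ref{lem:isotypic_balance}, since $\ord_T\lambda\to\infty$ and the $c_{m,\lambda}$ are bounded by $\dim V_\lambda$.

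\emph{Main obstacle.} The delicate point is justifying the trace identity $\Tr(\sigma^j\alpha_a^k\mid B_\Omega)=\sum_\lambda\lambda^k\Tr(\sigma^j\mid V_\lambda)$ for all $j$, not just $j=1$. This is the Riesz-theory decomposition of $B_\Omega$ into the finite-dimensional $V_\lambda$ plus a complement on which $\alpha_a$ is topologically nilpotent in trace. I would invoke the same argument used for Lemma~\ref{lem: C product over eigenvalues} and Lemma~\ref{lem:isotypic_balance}, which already use this identity with $\sigma^r$. The only new observation needed is that $\sigma^j$ commutes with $\alpha_a$, so the decomposition is $\sigma^j$-stable.
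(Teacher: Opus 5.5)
Your argument is correct, but it is organized differently from the paper's. The paper never passes to traces. It splits $B_\Omega^{(m)}=\bigoplus_{\mathrm{ord}(\zeta)=m}B_\Omega^{(\zeta)}$ and notes that the generalized $\lambda$-eigenspace of $\alpha_a$ on a single summand $B_\Omega^{(\zeta)}$ is $V_\lambda^{(\zeta)}$, of dimension $d_{\lambda,\zeta}=c_{m,\lambda}$. Hence $\det(I-s\alpha_a\mid B_\Omega^{(\zeta)})=E_m(T,s)$ for each $\zeta$ of order $m$, and the paper multiplies the $\varphi(m)$ identical factors.

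You instead keep $B_\Omega^{(m)}$ whole and write $\Tr(\alpha_a^k\mid B_\Omega^{(m)})=\Tr(\rho_m\alpha_a^k\mid B_\Omega)$. You then expand $\rho_m\in\frac1d\Z[\sigma]$ into twisted traces $\Tr(\sigma^j\alpha_a^k\mid B_\Omega)$ and recover the exponent $\varphi(m)c_{m,\lambda}$ as $\Tr(\rho_m\mid V_\lambda)$.

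The paper's route gives the finer per-$\zeta$ identity and needs only the untwisted eigenvalue factorization of a Fredholm determinant. It applies that factorization to the restriction of $\alpha_a$ to each summand $B_\Omega^{(\zeta)}$, implicitly using complete continuity there and multiplicativity of $\det$ over the direct sum. Your route reuses exactly the twisted identity $\Tr(\sigma^j\alpha_a^k\mid B_\Omega)=\sum_{\lambda}\lambda^k\Tr(\sigma^j\mid V_\lambda)$ already invoked in Lemma~\ref{lem:isotypic_balance}. The cost is a standard additivity fact: the trace of $\rho_m\alpha_a^k$ on $B_\Omega$ equals the trace of $\alpha_a^k$ on the invariant summand $\rho_m B_\Omega$, because $\rho_m\alpha_a^k$ vanishes on the complement $\ker\rho_m$. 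This is easily justified with an orthonormal basis adapted to $B_R=\rho_m B_R\oplus(1-\rho_m)B_R$, built weight by weight exactly as the paper does for $B_R^{(m)}$.
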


\begin{proof}
On $B_\Omega^{(\zeta)}$ the generalized $\lambda$-eigenspace of $\alpha_a$ is $V_\lambda \cap
B_\Omega^{(\zeta)} = V_\lambda^{(\zeta)}$, which has dimension $d_{\lambda, \zeta} = c_{m, \lambda}$. Thus
\[
\det(I - s \alpha_a \mid B_\Omega^{(\zeta)}) = \prod_{\lambda \neq 0}(1 - \lambda s)^{c_{m, \lambda}} = E_m(T, s).
\]
In particular, it is independent of $\zeta$. Thus,
\begin{align*}
D_m(T, s) &= \det( I - s\alpha_a \mid \bigoplus_{\mathrm{ord}(\zeta) = m} B_\Omega^{(\zeta)}) \\
&= \prod_{\mathrm{ord}(\zeta) = m} \det(I - s \alpha_a \mid B_\Omega^{(\zeta)}) \\
&= \prod_{\mathrm{ord}(\zeta) = m} E_m(T, s) \\
&= E_m(T, s)^{\varphi(m)}.
\end{align*}
\end{proof}

We now move to the Newton polygon. Due to a possible inversion of the prime $p$ from the factor $1/d$ in the projection map, we define the following. Set $R := \Q_q[[\pi^{1/D}]]$, and define $B_R := B \widehat{\otimes}_{\OT} R$. Since $\rho_m \in \frac{1}{d}\Z[\sigma]$, we may define $B_R^{(m)} := \rho_m B_R$. Recall, for $\xi = \sum_{u \in M} a_u \pi^{w(u)} y^u \in B_R$ we have $\ord_T \xi := \inf_{u \in M} \ord_T(a_u)$. 

\begin{lemma}\label{lem: ord of rho_m}
For every $\xi \in B_R$, $\ord_T(\rho_m(\xi)) \ge \ord_T(\xi)$.
\end{lemma}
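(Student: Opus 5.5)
The idea is to prove the inequality by acting directly on coordinates with respect to the orthonormal basis $\{\pi^{w(u)}y^u\}_{u\in M}$. The only delicate point is the factor $1/d$ in $\rho_m$, which may have negative $p$-adic valuation. It does no harm, because $\ord_T$ on $R=\Q_q[[\pi^{1/D}]]$ only measures the $\pi$-adic order; a constant in $\Q_q^\times$ has $\ord_T$ equal to $0$.

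First I would check that $\sigma$ is an isometry of $B_R$. Write $\xi=\sum_{u\in M} a_u\pi^{w(u)}y^u$. Then
\[
\sigma^j(\xi)=\sum_{u\in M} a_u\pi^{w(u)}y^{P^ju}=\sum_{v\in M} a_{P^{-j}v}\,\pi^{w(v)}y^{v},
\]
using $w(P^ju)=w(u)$ and $PM=M$, both of which follow from $P\Delta=\Delta$ (\cite[Lemma 2.3]{partial_Haessig}). So $\sigma^j$ permutes the basis vectors $\pi^{w(u)}y^u$, only reindexes the coefficients, and gives $\ord_T(\sigma^j\xi)=\ord_T(\xi)$.

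Next I would expand
\[
\rho_m(\xi)=\sum_{v\in M}\Big(\frac1d\sum_{j=0}^{d-1} b_m(j)\,a_{P^{-j}v}\Big)\pi^{w(v)}y^v .
\]
For each $v$, the inner coefficient is a finite combination of the $a_{P^{-j}v}$ with coefficients $b_m(j)/d\in\Q\subset\Q_q$. In $R$, the valuation $\ord_T$ of $\sum_i c_i x_i$ with $c_i\in\Q_q$ is at least $\min_i\ord_T(x_i)$. To see this, write $x_i=\sum_k x_{i,k}\pi^{k/D}$ with $x_{i,k}\in\Q_q$. Then $\sum_i c_ix_i$ has $\pi^{k/D}$-coefficient $\sum_i c_i x_{i,k}$, which vanishes for every $k$ below $\min_i$ of the $\pi$-orders. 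Applying this coefficientwise gives $\ord_T(\text{coeff}_v)\ge \min_j \ord_T(a_{P^{-j}v})\ge \ord_T(\xi)$. Taking the infimum over $v$ yields the lemma.

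The main obstacle is making sure the reader does not worry about $1/d$ when $p\mid d$. I would state explicitly that $\ord_T$ on $R$ is the $\pi^{1/D}$-adic valuation, which is trivial on $\Q_q^\times$. This is exactly why the paper passes from $\OT$ to $R$. Convergence of $\rho_m(\xi)$ in $B_R$ follows from the same coordinatewise bound.
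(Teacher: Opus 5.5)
Your proof is correct and is the paper's argument: each $\sigma^j$ preserves $\ord_T$, $\rho_m=\frac1d\sum_j b_m(j)\sigma^j$ is a rational combination of the $\sigma^j$, and the scalar $1/d\in\Q_q^\times$ does not change $\ord_T$ on $R$. You write out in coordinates (the permutation of the basis $\pi^{w(u)}y^u$ and the coefficientwise ultrametric bound) what the paper states in a single line.
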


\begin{proof}
Since $\ord_T \sigma^j(\xi) = \ord_T \xi$, and $\rho_m = \frac{1}{d} \sum_{j=0}^{d-1} b_m(j) \sigma^j$ with $b_m(j) \in \Z$, the result follows. 
\end{proof}

\begin{lemma}
$B_R^{(m)}$ has an orthonormal basis $\{\xi_i^{(m)}\}_{i \geq 1}$ over $R$ consisting of weight-homogeneous elements.
\end{lemma}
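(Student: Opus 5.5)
The plan is to build the basis weight by weight, using the fact that $\sigma$ permutes the orthonormal basis $\{\pi^{w(u)}y^u\}_{u\in M}$ of $B_R$ and preserves weight, since $w(Pu)=w(u)$ and $P\Delta=\Delta$. For each weight value $c\in\frac1D\Z_{\ge0}$, let $M_c:=\{u\in M: w(u)=c\}$. This is a finite set, because $\Delta$ is a polytope containing the origin in its interior relative to $C(\Delta)$, so it has only finitely many lattice points of bounded weight. Let $B_{R,c}$ be the finite free $R$-module spanned by $\{\pi^{c}y^u\}_{u\in M_c}$. Then $B_R=\widehat{\bigoplus}_c B_{R,c}$ orthogonally, each $B_{R,c}$ is $\sigma$-stable, and hence $\rho_m$ maps $B_{R,c}$ into itself. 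It follows that $B_R^{(m)}=\widehat{\bigoplus}_c \rho_m B_{R,c}$, with $\rho_m B_{R,c}$ consisting of weight-homogeneous elements.

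Next I reduce to $\Q_q$. Because $\sigma$ acts by permuting monomials with coefficients $\pm$-free (plain permutations), $\rho_m$ on $B_{R,c}$ is the $R$-linear extension of the $\Q_q$-linear idempotent $\rho_m$ on $V_c:=\bigoplus_{u\in M_c}\Q_q\,y^u$. Note that $\rho_m\in\frac1d\Z[\sigma]$ has rational coefficients $b_m(j)$, so it is defined over $\Q$. Since $R$ is a local ring containing the field $\Q_q$ with residue field $\Q_q$ (the maximal ideal is generated by $\pi^{1/D}$), I choose a $\Q_q$-basis of $\rho_m V_c$ that is orthonormal for the sup norm on coefficients. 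Concretely, I take a basis $e_1,\dots,e_r$ of $\rho_mV_c$ in reduced row echelon form with respect to the monomials $y^u$, $u\in M_c$, and rescale to make all entries $p$-integral with some entry a unit. Row echelon form with pivot entries equal to $1$ gives a basis whose coordinate matrix has an $r\times r$ identity minor. Its reduction mod $p$ therefore has full rank, so the basis is orthonormal in $\Z_q^{M_c}$. Because $\ord_T$ only sees the $\pi$-adic valuation of the coefficients (with $p$ already a unit in $R$), orthonormality for $\ord_T$ is even easier. Any set whose coefficient matrix has an invertible $r\times r$ minor over $\Q_q$ is $\ord_T$-orthonormal after tensoring with $R$, since $\ord_T(\sum a_ie_i)=\min\ord_T(a_i)$ can be read off from the pivot coordinates. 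I then set $\xi=\pi^c e_i$. These span $\rho_mB_{R,c}$ over $R$ because $\rho_m(R\otimes V_c)=R\otimes\rho_mV_c$.

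Finally, I assemble the bases over all $c$ and enumerate them as $\{\xi^{(m)}_i\}_{i\ge1}$. Orthonormality of the union follows from the orthogonal decomposition by weight. Completeness, meaning every element of $B_R^{(m)}$ is a convergent $R$-combination, follows from the following observation. If $\eta=\rho_m\eta\in B_R$, decompose $\eta=\sum_c\eta_c$ with $\ord_T$ of the coefficients tending to infinity. Each $\eta_c=\rho_m\eta_c$ is then an $R$-combination of the $\xi$ of weight $c$, with coefficients of $\ord_T$ at least $\ord_T\eta_c$, which is where Lemma \ref{lem: ord of rho_m} and the pivot argument enter. The main obstacle is making precise the claim that the norm on $B_R$ interacts with $R=\Q_q[[\pi^{1/D}]]$ (where $p$ is invertible) so that ``orthonormal'' is measured by $\ord_T$ alone. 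I would handle this via the pivot/identity-minor argument above rather than any reduction-mod-$p$ argument.
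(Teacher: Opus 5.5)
Your argument is correct and is essentially the paper's: split $B_R$ by weight, use that $\rho_m$ preserves each finite-rank weight piece, choose an $\ord_T$-orthonormal basis of the $\rho_m$-image of each piece, and take the union. Your only variation is to take that basis to be a $\Q_q$-basis of $\rho_m V_c$ (any $\Q_q$-basis works, since nonzero elements of $\Q_q$ have $\ord_T = 0$), which is the paper's step of lifting a basis of $W^{(m)}_{r,R}/\pi^{1/D}W^{(m)}_{r,R}$ carried out with constant lifts. You should drop the $p$-integral rescaling aside: $p$ is a unit in $R$ and plays no role in $\ord_T$, and the reasoning does not hold up anyway, because rescaling rows to be $p$-integral and primitive destroys the unit pivots, so full rank mod $p$ does not follow (e.g.\ rows $(1,0,1/p)$ and $(0,1,1/p)$ rescale to $(p,0,1)$ and $(0,p,1)$).
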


\begin{proof}
For a fixed weight $r \in \frac{1}{D}\Z_{\geq 0}$, let $L_r := \mathrm{span}_R \{ \pi^{w(u)} y^u \mid w(u) = r\}$. Since $\sigma$ is weight-preserving, so is $\rho_m$, and thus $\rho_m$ is an idempotent on $L_r$. Set $W_{r,R}^{(m)} := \rho_m L_r$. Then $W_{r,R}^{(m)}$ is a direct summand of $L_r$, and since $R$ is a discrete valuation ring, we see that $W_{r,R}^{(m)}$ is a finite free $R$-module.

We now construct an orthonormal basis of $W_{r,R}^{(m)}$. Since $R = \Q_q[[\pi^{1/D}]]$ is a discrete valuation ring with uniformizer $\pi^{1/D}$ and residue field $\Q_q$, we see that
\[
\overline{W}_{r,R}^{(m)} := W_{r,R}^{(m)} \Big/ \pi^{1/D} W_{r,R}^{(m)}
\]
is a finite-dimensional vector space over $\Q_q$. Let $\{ \bar{\xi}_{r, 1}, \dots, \bar{\xi}_{r, h} \}$ be a $\Q_q$-basis of $\overline{W}_{r,R}^{(m)}$. Lift these to any elements $\mathcal{B}_r := \{ \xi_{r, 1}, \dots, \xi_{r, h} \}$ in $W_{r,R}^{(m)}$. Then $\mathcal{B}_r$ is a basis of $W_{r,R}^{(m)}$ over $R$ consisting of weight-$r$ elements with the property: for $\xi = \sum a_i \xi_{r, i} \in W_{r,R}^{(m)}$ with $a_i \in R$, then $\ord_T(\xi) = \min_{1 \leq i \leq h} \ord_T a_i$. 

Let $\mathcal{B} := \bigcup_r \mathcal{B}_r$ be the union of these bases over $r \in \frac 1D\Z$. We now show that $\mathcal{B}$ is an orthonormal basis of $B_R^{(m)}$. Let $\xi \in B_R^{(m)}$, then $\xi = \rho_m(\zeta)$ for some $\zeta \in B_R$. Write $\zeta = \sum \zeta_r$ where $\zeta_r \in L_r$ and $\ord_T \zeta_r \to \infty$ as $r \to \infty$. Then $\xi = \sum \rho_m( \zeta_r)$ with $\rho_m(\zeta_r) \in W_{r,R}^{(m)}$. By Lemma \ref{lem: ord of rho_m}, $\ord_T \rho_m(\zeta_r) \ge \ord_T(\zeta_r)$, and so $\ord_T \rho_m(\zeta_r) \to \infty$ as $r \to \infty$. Since each $\rho_m(\zeta_r)$ lives in its own weight-$r$ space, their $T$-adic valuations cannot affect each other. Consequently, $\ord_T \xi = \inf_r \ord_T \rho_m(\zeta_{r})$.

Using the basis $\mathcal{B}_r$, write $\rho_m(\zeta_r) = \sum a_{r, i} \xi_{r, i}$ for some $a_{r, i} \in R$. By construction of the basis, $\ord_T \rho_m(\zeta_r) = \min_i \ord_T a_{r, i}$. Thus, $\ord_T \xi = \inf_{r, i} \ord_T a_{r, i}$. This shows $\mathcal{B}$ is an orthonormal basis of $B_R^{(m)}$. 
\end{proof}

Since $B_\Omega^{(m)} = B_R^{(m)} \widehat{\otimes}_R \Omega$, we may compute $D_m(T, s)$ using $R$ instead of $\Omega$: 
\[
D_m(T, s) = \det(I - s \alpha_a \mid B_\Omega^{(m)}) = \det(I - s \alpha_a \mid B_R^{(m)}).
\]
Let $\alpha := \tau^{-1} \circ \psi_p \circ F(y)$ so that $\alpha_a = \alpha^a$. On the orthonormal basis $\{\xi_i^{(m)}\}_{i \geq 1}$ of $B_R^{(m)}$, write
\[
\alpha( \xi_i^{(m)} ) = \sum_{j =1}^\infty \tilde{A}_{ij}^{(m)} \xi_j^{(m)}
\]
with $\tilde{A}_{ij}^{(m)} \in R$. 

\begin{lemma}\label{lem:adapted_basis}
For every $i, j \geq 1$, we have $\ord_T(\tilde{A}_{ij}^{(m)}) \;\ge\; (p-1) w(\xi_j^{(m)})$.
\end{lemma}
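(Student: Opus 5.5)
The plan is to reduce to the monomial estimate already obtained in the proof of Theorem~\ref{thm:complete_continuity}, and then transport it to the adapted basis using that $\alpha$ commutes with $\rho_m$ and that $\rho_m$ does not decrease $\ord_T$ (Lemma~\ref{lem: ord of rho_m}).

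\textbf{Step 1: the monomial estimate and stability of $B_R^{(m)}$.} For a monomial basis element we have $\alpha(\pi^{w(u)}y^u)=\sum_{r\in M}A_{u,r}\pi^{w(r)}y^r$ with $\ord_T A_{u,r}\ge (p-1)w(r)$. Since $F$ is $\sigma$-invariant and $\tau^{-1}$, $\psi_p$ commute with the permutation $\sigma$ of variables, $\alpha$ commutes with $\sigma$, and hence with $\rho_m$. So $\alpha$ preserves $B_R^{(m)}$, and the matrix $\tilde A^{(m)}$ makes sense.

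\textbf{Step 2: extend to homogeneous elements.} Fix $i$ and let $\xi_i^{(m)}$ be homogeneous of weight $r_0$. Write $\xi_i^{(m)}=\sum_{w(u)=r_0}a_u\pi^{w(u)}y^u$, a finite sum with $a_u\in R$ and $\min\ord_T a_u=0$ by orthonormality. Then
\[
\alpha(\xi_i^{(m)})=\sum_{r}\Bigl(\sum_u a_u A_{u,r}\Bigr)\pi^{w(r)}y^r .
\]
Decompose this by weight as $\sum_{s}\eta_s$ with $\eta_s\in L_s$. The previous step gives $\ord_T\eta_s\ge (p-1)s$. Each $\eta_s$ lies in $B_R^{(m)}$, because $\eta_s=\rho_m\eta_s$: the operator $\rho_m$ is weight-preserving and fixes $\alpha(\xi_i^{(m)})$. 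Hence $\eta_s\in W^{(m)}_{s,R}$.

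\textbf{Step 3: read off coefficients.} Expand $\eta_s=\sum_{j:\,w(\xi_j^{(m)})=s}\tilde A^{(m)}_{ij}\xi_j^{(m)}$. This is legitimate because the basis $\mathcal B$ is the union of the $\mathcal B_s$ and the expansion is unique. The defining property of $\mathcal B_s$ gives $\min_j\ord_T\tilde A^{(m)}_{ij}=\ord_T\eta_s\ge(p-1)s$, which is the claim $\ord_T\tilde A^{(m)}_{ij}\ge(p-1)w(\xi_j^{(m)})$.

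\textbf{Main obstacle.} The delicate point is justifying $\ord_T$ of a weight-$s$ element computed in the monomial basis versus in $\mathcal B_s$. Both norms equal $\ord_T\eta_s$: the monomial one by the definition of $\ord_T$ on $B_R$, and the $\mathcal B_s$ one by the lemma's orthonormality property. The same care applies to $\xi_i^{(m)}$ having monomial coefficients of $\ord_T\ge0$, and this is the equality I would check carefully.
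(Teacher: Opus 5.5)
Your proof is correct and is essentially the paper's: expand $\xi_i^{(m)}$ in same-weight monomials, apply the estimate $\ord_T(A_{u,v})\ge (p-1)w(v)$ from Theorem~\ref{thm:complete_continuity}, split $\alpha(\xi_i^{(m)})$ into weight components $\eta_s\in W^{(m)}_{s,R}$, and read off the coefficients using the norm property of $\mathcal{B}_s$; you also justify $\eta_s=\rho_m\eta_s$, which the paper only asserts. One small slip is that $\alpha$ is $\tau^{-1}$-semilinear, so the coefficients should be $\sum_u \tau^{-1}(a_u)A_{u,r}$; this is harmless because $\tau^{-1}$ preserves $\ord_T$.
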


\begin{proof}
Recall from the proof of Theorem~\ref{thm:complete_continuity} that
\[
\alpha(\pi^{w(u)} y^u) = \sum_{v \in M} A_{u,v} \pi^{w(v)} y^v,
\]
where the coefficients satisfy $\ord_T(A_{u,v}) \ge (p-1)w(v)$. 

We abuse notation slightly in the following; if $\xi$ is a weight-homogeneous element, we write $w(\xi)$ for its weight. With this in mind, since the elements in the basis $\{\xi_i^{(m)}\}$ of $B_R^{(m)}$ are weight-homogeneous, each $\xi_i^{(m)}$ of weight $w(\xi_i^{(m)})$ can be uniquely expressed as a finite linear combination of the standard basis elements of the same weight:
\[
\xi_i^{(m)} = \sum_{w(u)=w(\xi_i^{(m)})} b_u \pi^{w(u)} y^u \qquad (b_u \in R).
\]
Applying $\alpha$ gives:
\begin{align*}
\alpha(\xi_i^{(m)}) &=  \sum_{w(u)=w(\xi_i^{(m)})} \tau^{-1}(b_u) \alpha(\pi^{w(u)} y^u) \\
&= \sum_{w(u) = w(\xi_i^{(m)})} \tau^{-1}(b_u) \sum_{v \in M} A_{u,v} \pi^{w(v)} y^v \\
&= \sum_{v \in M} C_v \pi^{w(v)} y^v,
\end{align*}
where $C_v := \sum_{w(u)=w(\xi_i^{(m)})} \tau^{-1}(b_u) A_{u,v}$. Since $\tau^{-1}$ acts on Teichm\"uller units and $\ord_T(b_u) \geq 0$, we have
\[
\ord_T(C_v) \ge \min_u \big( \ord_T(b_u) + \ord_T(A_{u,v}) \big) \ge (p-1)w(v).
\]
Denote by $\eta_r := \sum_{w(v)=r} C_v \pi^{w(v)} y^v$ the weight-$r$ component of $\alpha(\xi_i^{(m)})$. Note that
\[
\ord_T(\eta_r) = \inf_{w(v)=r} \ord_T(C_v) \;\ge\; (p-1)r.
\]

Now, since $\eta_r \in W_{r, R}^{(m)}$, we may write $\eta_r$ in terms of the orthonormal basis of $W_{r, R}^{(m)}$:
\[
\eta_r = \sum_{w(\xi_j^{(m)})=r} \tilde{A}_{ij}^{(m)} \xi_j^{(m)}
\]
for some $\tilde{A}_{ij}^{(m)} \in R$. By construction, we have $\ord_T(\eta_r) = \min_{w(\xi_j^{(m)})=r} \ord_T(\tilde{A}_{ij}^{(m)})$. Hence,
\[
\ord_T(\tilde{A}_{ij}^{(m)}) \;\ge\; \min_{w(\xi_j^{(m)})=r} \ord_T(\tilde{A}_{ij}^{(m)}) = \ord_T(\eta_r) \;\ge\; (p-1)r = (p-1)w(\xi_j^{(m)}).
\]
This completes the proof.
\end{proof}

For each $m \mid d$, we define a Hodge polygon $\HP_m(\Delta)$ as follows. Set $h_m(r) := \mathrm{rank}_R W_{r, R}^{(m)}$. Using the notation from Section \ref{sec: polygons}, define $\HP_m(\Delta)$ as the segment multiset (segments are listed as (slope, horz. length))
\[
\{ (r, h_m(r)) : r \in \tfrac1D \Z_{\ge 0} \,\}.
\]
We note that $\HP_m(\Delta)$ depends only on $\bd$, $\Delta$, and a divisor $m$ of $d$. Denote by $\NP_T\!\big( E_m \big)$ the $T$-adic Newton polygon of $E_m(T, s)$.

\begin{theorem}\label{thm:native_orbit_noh}
Using notation from Section \ref{sec: polygons},  for every $m \mid d$,
\[
\NP_T\!\big( E_m \big) \;\ge\; a(p-1) \cdot \HP_m(\Delta)^{\oplus \frac{1}{\varphi(m)}}.
\]
\end{theorem}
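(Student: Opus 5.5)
The plan is to first bound $\NP_T(D_m)$, where $D_m(T,s)=\det(I-s\alpha_a\mid B_R^{(m)})$, and then pass to $E_m$ using Lemma~\ref{lem:Dm_power}. The argument follows the standard Dwork--Adolphson--Sperber route, with one change: because $\alpha_a=\alpha^a$ and $\alpha$ is only $\tau^{-1}$-semilinear, we work with the $R_0$-linear operator $\alpha$ on $B_R^{(m)}$ viewed over the fixed ring $R_0:=\Q_p[[\pi^{1/D}]]$. Restriction of scalars from $\Q_q$ to $\Q_p$ multiplies each rank $h_m(r)$ by $a$.

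\textbf{Step 1.} Fix a basis $\{\beta_1,\dots,\beta_a\}$ of $\Z_q$ over $\Z_p$, consisting of units. Then $\{\beta_k\xi^{(m)}_j\}$ is an orthonormal basis of $B_R^{(m)}$ over $R_0$, and it is still weight-homogeneous with $w(\beta_k\xi_j^{(m)})=w(\xi_j^{(m)})$. By Lemma~\ref{lem:adapted_basis}, the matrix of $\alpha$ in this basis has $(i,j)$-entry of $T$-adic order at least $(p-1)w(\xi_j^{(m)})$. This holds because $\tau^{-1}(\beta_k)$ expands over the $\beta_{k'}$ with $\Z_p$-coefficients.

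The standard lemma then applies: the Fredholm determinant of a completely continuous operator whose matrix has column $j$ divisible by $\pi^{c_j}$ has Newton polygon at least the polygon with slopes $c_j$. The coefficient of $s^\ell$ is a sum of $\ell\times\ell$ principal minors, each of order at least the sum of the $\ell$ smallest $c_j$. This gives
\[
\NP_T\det(I-s\alpha\mid B_R^{(m)}/R_0)\ \ge\ (p-1)\cdot\HP_m(\Delta)^{\oplus a}.
\]

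\textbf{Step 2.} We pass from $\alpha$ to $\alpha_a=\alpha^a$. The standard relation, as in Dwork/Wan's treatment of $\alpha_a$ versus $\alpha_1$, is
\[
\det(I-s^a\alpha_a\mid B_R^{(m)}/R)^{\text{(norm to }R_0)}=\prod_{\zeta^a=1}\det(I-\zeta s\,\alpha\mid B_R^{(m)}/R_0).
\]
More precisely, $\det_{R_0}(I-s^a\alpha^a)=\prod_{\zeta^a=1}\det_{R_0}(I-\zeta s\alpha)$, and $\det_{R_0}(I-t\alpha_a)=N_{R/R_0}\det_R(I-t\alpha_a)$, which equals $\det_R(I-t\alpha_a)^a$ because the latter has coefficients in $R_0$ (indeed in $\Z_p[[T]]$-related rings; alternatively use the Galois invariance argument of Theorem~\ref{thm:Cd_meromorphic}).

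The Newton polygon of the right side in $s$ has $a$ times the slopes of the Step~1 bound, each repeated $a$ times, hence in the variable $t=s^a$ we get the bound $(p-1)\HP_m(\Delta)^{\oplus a}$ scaled to slopes $a(p-1)r$. Dividing the $a$-th power out (which compresses lengths by $1/a$) yields
\[
\NP_T(D_m)\ \ge\ a(p-1)\cdot \HP_m(\Delta).
\]
This bookkeeping is the step I expect to be the main obstacle: one has to keep track of the slope scaling versus the length scaling carefully, and justify $\det_{R_0}=(\det_R)^a$.

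\textbf{Step 3.} By Lemma~\ref{lem:Dm_power}, $D_m=E_m^{\varphi(m)}$, so $\NP_T(D_m)=\NP_T(E_m)^{\oplus\varphi(m)}$. Compressing lengths by $1/\varphi(m)$ preserves the inequality of polygons, since the slope multisets are simply scaled in multiplicity. Therefore
\[
\NP_T(E_m)\ \ge\ a(p-1)\cdot\HP_m(\Delta)^{\oplus\frac{1}{\varphi(m)}},
\]
which is the statement of Theorem~\ref{thm:native_orbit_noh}.
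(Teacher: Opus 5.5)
Your proposal is correct and follows the paper's route: the column bound from Lemma~\ref{lem:adapted_basis}, then the passage from $\alpha$ to $\alpha_a=\alpha^a$ to get $\NP_T(D_m)\ge a(p-1)\HP_m(\Delta)$, then $D_m=E_m^{\varphi(m)}$ with horizontal lengths compressed by $1/\varphi(m)$. The only difference is that you spell out the ``standard argument'' the paper cites from Dwork (restriction of scalars to $R_0$, the identity $\det_{R_0}(I-s^a\alpha^a)=\prod_{\zeta^a=1}\det_{R_0}(I-\zeta s\alpha)$, and the norm $N_{R/R_0}$); your bookkeeping lands on the right bound, and the norm step does not even need $D_m$ to have coefficients in $R_0$, since each Galois conjugate of $D_m$ has the same $T$-adic Newton polygon.
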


\begin{proof}
By Lemma \ref{lem:adapted_basis}, the matrix of $\alpha$ on $B_R^{(m)}$ satisfies the lower bound $\ord_T(\tilde{A}_{ij}^{(m)}) \ge (p-1)w(\xi_j^{(m)})$. Since $\alpha_a = \alpha^a$, by a standard argument \cite[Section 7]{Dwork-zetafunctionof-1964}, the $T$-adic Newton polygon of $\alpha_a$ satisfies 
\[
\NP_T(D_m) = \NP_T\!\big( \det(I - s \alpha_a \mid B_R^{(m)}) \big) \ge a(p-1) \HP_m(\Delta).
\]
Now, by Lemma~\ref{lem:Dm_power}, $D_m = E_m^{\varphi(m)}$. Thus the reciprocal zeros of $D_m$ are the same as $E_m$ but repeated $\varphi(m)$ times. In terms of Newton polygons, this means  $\NP_T(D_m)$ is $\NP_T(E_m)$ with every horizontal length multiplied by $\varphi(m)$. Dividing horizontal lengths by $\varphi(m)$ gives 
\[
\NP_T(E_m) = \NP_T(D_m)^{\oplus \frac{1}{\varphi(m)}} \ge a(p-1) \cdot \HP_m(\Delta)^{\oplus \frac{1}{\varphi(m)}}.
\]
\end{proof}

Define the polygons:
\[
\HP^+(\Delta) := \bigoplus_{\substack{m \in \mathcal S_d \\ \mu(m) = 1}} \HP_m(\Delta)^{\oplus \frac{1}{\varphi(m)}},
\qquad
\HP^-(\Delta) := \bigoplus_{\substack{m \in \mathcal S_d \\ \mu(m) = -1}} \HP_m(\Delta)^{\oplus \frac{1}{\varphi(m)}}.
\]
Recall from Theorem \ref{thm:Cd_meromorphic} that we may write the $C$-function in reduced form as
\[
C_\bd(f, T; s) = \frac{A(T, s)}{B(T, s)},
\]
where $A(T, s),\, B(T, s) \in 1 + s\,\Z_p[[T]][[s]]$ are $T$-adic entire and coprime.

\begin{theorem}\label{thm:noh_entire_constituents}
The reduced numerator and denominator of the $C$-function satisfy
\[
\NP_T(A) \;\ge\; a(p-1) \HP^+(\Delta), \qquad \NP_T(B) \;\ge\; a(p-1) \HP^-(\Delta).
\]
\end{theorem}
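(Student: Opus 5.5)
We will compare the reduced fraction $A/B$ with the factorization of Theorem~\ref{thm:structural_factorization}. Set
\[
E^+ := \prod_{m \in \mathcal S_d,\ \mu(m)=1} E_m(T,s), \qquad E^- := \prod_{m \in \mathcal S_d,\ \mu(m)=-1} E_m(T,s).
\]
Then $C_\bd(f,T;s) = E^+/E^-$ with both $E^\pm$ being $T$-adic entire. Each $E_m$ is a product of finitely many (by $T$-adic order) factors $(1-\lambda s)^{c_{m,\lambda}}$ with $\ord_T\lambda \to \infty$. Polygon addition is therefore compatible with products. Theorem~\ref{thm:native_orbit_noh} then gives
\[
\NP_T(E^\pm) \ge \bigoplus_{\mu(m)=\pm1} a(p-1)\HP_m(\Delta)^{\oplus 1/\varphi(m)} = a(p-1)\HP^\pm(\Delta).
\]
Here we use that scaling slopes by $a(p-1)$ commutes with $\oplus$ and with length scaling. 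We also use the elementary fact that if $\NP(f_i)\ge \mathcal P_i$ for entire $f_i$, then $\NP(f_1f_2) = \NP(f_1)\oplus\NP(f_2) \ge \mathcal P_1\oplus\mathcal P_2$. This last fact holds because the $k$-th vertex height of a sum polygon is the sum of the smallest $k$ slopes (counted with length), and pointwise lower bounds on the sorted slope sequence are preserved under merging.

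Next we pass from $E^\pm$ to the reduced fraction. In $\Omega$, the reciprocal zeros of $A$ are exactly the $\lambda$ with $m_\lambda(\sigma)>0$, with multiplicity $m_\lambda(\sigma)$. By the proof of Theorem~\ref{thm:structural_factorization}, $m_\lambda(\sigma) = \sum_m \mu(m)c_{m,\lambda} = e^+_\lambda - e^-_\lambda$, where $e^\pm_\lambda$ is the multiplicity of $\lambda$ as a reciprocal zero of $E^\pm$. Hence $A = E^+/G$ and $B = E^-/G$, where $G = \prod_\lambda (1-\lambda s)^{\min(e^+_\lambda, e^-_\lambda)}$ is the common factor. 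Thus the multiset of reciprocal zeros of $A$ is a sub-multiset of that of $E^+$, and likewise for $B$ and $E^-$.

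The remaining step, which is the main point to check, is that removing reciprocal zeros only raises the Newton polygon. Removing roots deletes slopes, namely the values $\ord_T\lambda$ with multiplicity, from the multiset. When the remaining slopes are sorted in nondecreasing order, the $i$-th smallest slope of the sub-multiset is at least the $i$-th smallest slope of the full multiset. Integrating, the polygon of $A$ lies on or above that of $E^+$ at every $x \ge 0$, using the $+\infty$ convention of Section~\ref{sec: polygons} if $A$ has finitely many zeros. Therefore
\[
\NP_T(A) \ge \NP_T(E^+) \ge a(p-1)\HP^+(\Delta),
\]
and similarly $\NP_T(B)\ge a(p-1)\HP^-(\Delta)$. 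Some care is needed to justify that $\NP_T$ of a $T$-adic entire series over $\Omega$ is determined by the multiset $\{\ord_T\lambda\}$, and that these $\NP_T$ agree with those computed over $\Z_p[[T]]$. This follows from the $T$-adic Weierstrass factorization already used implicitly in Theorem~\ref{thm:Cd_meromorphic}.
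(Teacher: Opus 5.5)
Your proposal is correct and follows the paper's proof. You group the $E_m$ by $\mu(m)=\pm 1$, apply Theorem~\ref{thm:native_orbit_noh} together with additivity of Newton polygons under products, and then note that cancelling common reciprocal zeros to reach the reduced $A,B$ can only raise the polygons.

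One small repair. Your justification that $\NP(f_i)\ge\mathcal P_i$ implies $\NP(f_1f_2)\ge\mathcal P_1\oplus\mathcal P_2$ argues slope by slope. A graph inequality does not give a slopewise one: slopes $(1,1)$ lie above slopes $(0,2)$ as polygons, yet the second slope is smaller. Instead use that the graph of $\mathcal P_1\oplus\mathcal P_2$ at $x$ equals $\min_{x_1+x_2=x}\bigl(P_1(x_1)+P_2(x_2)\bigr)$, which is monotone in each $P_i$. Your slopewise argument is fine for the root-removal step.
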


\begin{proof}
Set $\mathcal A := \prod_{\mu(m) = 1} E_m$ and $\mathcal B := \prod_{\mu(m) = -1} E_m$. Then $C_\bd = \mathcal A / \mathcal B$ by Theorem~\ref{thm:structural_factorization}. Since, Newton polygons add
under multiplication of entire series, we have from Theorem~\ref{thm:native_orbit_noh}:
\[
\NP_T(\mathcal A) = \bigoplus_{\mu(m) = 1} \NP_T(E_m)
\;\ge\; \bigoplus_{\mu(m) = 1} \Big( a(p-1) \cdot \HP_m(\Delta)^{\oplus \frac{1}{\varphi(m)}} \Big) 
= a(p-1) \cdot \HP^+(\Delta),
\]
and likewise $\NP_T(\mathcal B) \ge a(p-1) \HP^-(\Delta)$.

To pass to the reduced factors $A, B$, which may differ from $\mathcal A, \mathcal B$ since the $E_m$ need not be pairwise coprime, we need only note that the Newton polygon rises when a root is removed. Hence, 
\[
\NP_T(A) \ge \NP_T(\mathcal A) \ge a(p-1) \HP^+(\Delta)
\]
and similarly $\NP_T(B) \ge a(p-1) \HP^-(\Delta)$.
\end{proof}

\section{Specialization of Newton-over-Hodge}\label{subsec:noh_specialization}

Let $\psi$ be a continuous, locally constant, additive character of $\Z_p$ of conductor $p^m$. Set $\pi_\psi := \psi(1) - 1$, and recall from Section~\ref{sec:rationality} that we have the specialization $C_{\bd, \psi}(f; s) := C_\bd(f, T; s)\big|_{T = \pi_\psi}$. Let $\HP^\pm(\Delta)$ be the polygons from the previous section.

\begin{lemma}\label{lem:np_transport}
Let $H = 1 + \sum_{k \ge 1} a_k(T)\, s^k \in 1 + s\,\Z_p[[T]][[s]]$ be $T$-adic entire. Then
$H_\psi := H\big|_{T = \pi_\psi} \in 1 + s\,\Z_p[\zeta_{p^m}][[s]]$ is $\pi_\psi$-adic entire,
and
\[
\NP_{\pi_\psi}(H_\psi) \;\ge\; \NP_T(H).
\]
\end{lemma}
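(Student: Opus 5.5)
The plan is to reduce the statement to a coefficientwise comparison of valuations, using the inequality $\ord_{\pi_\psi}(\mathrm{sp}_\psi(h)) \ge \ord_T(h)$ from Lemma~\ref{lem:specialization}, and then to pass from coefficients to Newton polygons via lower convex hulls.

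First, specialization is a ring homomorphism $\Z_p[[T]] \to \Z_p[\zeta_{p^m}]$ sending $T \mapsto \pi_\psi$, and the series $a_k(\pi_\psi)$ converge because $\ord_p \pi_\psi > 0$. Hence $H_\psi = 1 + \sum_k a_k(\pi_\psi) s^k$ has coefficients in $\Z_p[\zeta_{p^m}]$. We normalize $\ord_{\pi_\psi}$ so that $\ord_{\pi_\psi}(\pi_\psi) = 1$; note $\Z_p[\zeta_{p^m}]$ is a DVR with uniformizer $\pi_\psi = \zeta_{p^m}-1$ (up to choice of generator of $\psi$).

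By Lemma~\ref{lem:specialization} we have, for every $k$,
\[
\ord_{\pi_\psi}\big(a_k(\pi_\psi)\big) \ge \ord_T\big(a_k(T)\big).
\]
$T$-adic entireness of $H$ means $\ord_T(a_k)/k \to \infty$. The displayed inequality then gives $\ord_{\pi_\psi}(a_k(\pi_\psi))/k \to \infty$, which is exactly $\pi_\psi$-adic entireness of $H_\psi$.

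For the polygon comparison, recall that $\NP_T(H)$ is the lower convex hull of the points $(k, \ord_T a_k)$ together with $(0,0)$, and similarly $\NP_{\pi_\psi}(H_\psi)$ is the lower convex hull of $(k, \ord_{\pi_\psi} a_k(\pi_\psi))$. Each point of the second set lies on or above the corresponding point of the first; points with $a_k(\pi_\psi)=0$ sit at $+\infty$ and can be ignored. Lower convex hulls are monotone under raising points, so the polygon inequality follows. Since both polygons are drawn from the origin over all $x \ge 0$, the comparison in the sense of Section~\ref{sec: polygons} holds.

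The only subtle point is the normalization of valuations. $\ord_T$ is normalized by $\ord_T(T)=1$, and the estimate from Lemma~\ref{lem:specialization} is stated with $\ord_{\pi_\psi}$ normalized by $\ord_{\pi_\psi}(\pi_\psi)=1$, so the units match. I would also check that $a_k \in \Z_p[[T]]$ is needed, and not merely $\OT$, so that no fractional powers $\pi^{1/D}$ appear; this holds by hypothesis. I expect no real obstacle: the proof is essentially an immediate consequence of Lemma~\ref{lem:specialization} combined with monotonicity of convex hulls.
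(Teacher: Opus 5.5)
Your proposal is correct and follows essentially the same route as the paper: apply the coefficientwise inequality $\ord_{\pi_\psi} a_k(\pi_\psi) \ge \ord_T a_k(T)$ from Lemma~\ref{lem:specialization}, then use monotonicity of lower convex hulls. You also write out the $\pi_\psi$-adic entireness check ($\ord_T(a_k)/k \to \infty$ forces the same for the specialized coefficients), which the paper leaves implicit.
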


\begin{proof}
By Lemma~\ref{lem:specialization} the substitution $T \mapsto \pi_\psi$ is a continuous homomorphism $\Z_p[[T]] \to \Z_p[\zeta_{p^m}]$ with $\ord_{\pi_\psi} a_k(\pi_\psi) \ge \ord_T a_k(T)$ for every $k$. Hence, the $\pi_\psi$-adic Newton polygon of $H_\psi$ lies on or above the $T$-adic Newton polygon of $H$. The result follows. 
\end{proof}

\begin{theorem}\label{thm:specialized_noh}
Write $C_{\bd, \psi}(f; s) = A'(s) / B'(s)$ with $A', B' \in 1 + s \Z_p[\zeta_{p^m}][[s]]$ coprime. Then
\[
\NP_{\pi_\psi}(A') \;\ge\; a(p-1) \HP^+(\Delta), \qquad
\NP_{\pi_\psi}(B') \;\ge\; a(p-1) \HP^-(\Delta).
\]
\end{theorem}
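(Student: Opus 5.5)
We specialize the reduced $T$-adic factorization of the $C$-function and then pass to the reduced $\pi_\psi$-adic fraction, in the same way Theorem~\ref{thm:noh_entire_constituents} passed from $\mathcal A,\mathcal B$ to $A,B$.

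First, by Theorem~\ref{thm:Cd_meromorphic} write $C_\bd(f,T;s) = A(T,s)/B(T,s)$ with $A,B \in 1+s\Z_p[[T]][[s]]$ $T$-adic entire and coprime. Theorem~\ref{thm:noh_entire_constituents} gives
\[
\NP_T(A) \ge a(p-1)\HP^+(\Delta), \qquad \NP_T(B) \ge a(p-1)\HP^-(\Delta).
\]
By Lemma~\ref{lem:np_transport}, the specializations $A_\psi := A|_{T=\pi_\psi}$ and $B_\psi := B|_{T=\pi_\psi}$ are $\pi_\psi$-adic entire in $1+s\Z_p[\zeta_{p^m}][[s]]$. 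Moreover,
\[
\NP_{\pi_\psi}(A_\psi)\ge \NP_T(A) \ge a(p-1)\HP^+(\Delta),
\]
and likewise $\NP_{\pi_\psi}(B_\psi) \ge a(p-1)\HP^-(\Delta)$. Since specialization is a ring homomorphism on coefficients, it respects the identity $C_\bd\cdot B = A$ in $\Z_p[[T]][[s]]$. Hence $C_{\bd,\psi}(f;s) = A_\psi(s)/B_\psi(s)$.

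The second step handles the possible loss of coprimality: $A_\psi$ and $B_\psi$ may share zeros after specialization. Entire series over the complete field $\Q_p(\zeta_{p^m})$ factor, after extending scalars to $\C_p$, as Weierstrass-type products $\prod(1-\beta s)$ over their reciprocal zeros. The reduced fraction $A'/B'$ is therefore obtained by cancelling the common reciprocal zeros with multiplicity, so $A' = A_\psi/G$ and $B' = B_\psi/G$ for an entire common factor $G$ with constant term $1$. Uniqueness of the reduced form among coprime entire series with constant term $1$ identifies this with the $A',B'$ in the statement. Removing reciprocal zeros from an entire series only deletes segments from its Newton polygon, and deleting a segment of slope $\mu$ replaces the polygon by one lying on or above it: the remaining segments are shifted left, which raises the graph because slopes are non-decreasing. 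This is the same observation used at the end of Theorem~\ref{thm:noh_entire_constituents}. It yields
\[
\NP_{\pi_\psi}(A') \ge \NP_{\pi_\psi}(A_\psi) \ge a(p-1)\HP^+(\Delta),
\]
and similarly for $B'$.

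The main point requiring care is the second step. We must justify that the reduced numerator and denominator exist as entire series dividing $A_\psi$ and $B_\psi$ respectively, so that $A'$ has Newton polygon obtained by deleting segments. We also need the coefficients of $A'$ and $B'$ to lie in $\Z_p[\zeta_{p^m}]$. For the divisibility, I would use the $p$-adic Weierstrass preparation/factorization theorem for entire series. For integrality, I would use a Galois-invariance argument like the one in Theorem~\ref{thm:Cd_meromorphic}: the set of common reciprocal zeros is stable under $\Gal(\overline{\Q}_p/\Q_p(\zeta_{p^m}))$, so the coefficients of $G$ lie in $\Q_p(\zeta_{p^m})$. Integrality then follows because every reciprocal zero $\beta$ of $A_\psi$ satisfies $\ord\beta\ge0$, since its Newton slopes are nonnegative.
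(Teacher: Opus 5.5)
Your proposal is correct and follows the paper's proof: you specialize the reduced $T$-adic factors $A,B$ via Lemma~\ref{lem:np_transport}, apply Theorem~\ref{thm:noh_entire_constituents}, and note that $A',B'$ are obtained from $A_\psi,B_\psi$ by cancelling common reciprocal zeros, which can only raise the Newton polygons. Your extra justification of the cancellation step fills in details the paper leaves implicit: Weierstrass factorization, uniqueness of the reduced form, Galois descent, and integrality of the reciprocal zeros.
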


\begin{proof}
At the end of the previous section, we wrote $C_\bd(f, T; s) = A(T, s) / B(T, s)$. Let $A_\psi$ and $B_\psi$ be the specialization of $A(T, s)$ and $B(T, s)$ at $T = \pi_\psi$. By Theorem~\ref{thm:noh_entire_constituents} and Lemma~\ref{lem:np_transport}, we have
\[
\NP_{\pi_\psi}(A_\psi) \ge \NP_T(A) \ge a(p-1)\HP^+(\Delta),
\]
\[
\NP_{\pi_\psi}(B_\psi) \ge \NP_T(B) \ge a(p-1)\HP^-(\Delta).
\]
The result follows since $A'$ and $B'$ are factors of $A_\psi$ and $B_\psi$, respectively.
\end{proof}

\section{Newton-over-Hodge for the $L$-function}\label{sec:noh_L_func}

In this section, we obtain Newton-over-Hodge bounds for both the partial $T$-adic $L$-function $L_\bd(f, T; s)$ and its specialization $L_{\bd, \psi}(f; s)$. 

Recall from Theorem~\ref{thm:L_C_relation} that we have the relation
\[
L_\bd(f, T; s)^{(-1)^{n-1}} = C_\bd(f, T; s)^{\delta_\bd}.
\]
Using the definition of the $\delta_\bd$ operator, this can be expanded as a product over all subsets $J \subseteq \{1, \dots, n\}$. Setting $d_J := \sum_{j \in J} d_j$, we have
\[
L_\bd(f, T; s)^{(-1)^{n-1}} = \prod_{J \subseteq \{1, \dots, n\}} C_\bd\!\left(f, T; q^{d_J} s\right)^{(-1)^{|J|}}.
\]
Let $J_{\mathrm{even}}$ be the collection of subsets $J$ for which the cardinality $|J|$ is even, and $J_{\mathrm{odd}}$ be the collection of odd cardinality subsets. Let $C_\bd(f, T; s) = A(T, s) / B(T, s)$ be the reduced fraction from Theorem \ref{thm:Cd_meromorphic}. Then 
\begin{equation}\label{equ: L into Js}
L_\bd(f, T; s)^{(-1)^{n-1}} =  \frac{ \displaystyle\prod_{J \in J_{\mathrm{even}}} A(T, q^{d_J} s) \prod_{J \in J_{\mathrm{odd}}} B(T, q^{d_J} s) }{ \displaystyle\prod_{J \in J_{\mathrm{odd}}} A(T, q^{d_J} s) \prod_{J \in J_{\mathrm{even}}} B(T, q^{d_J} s) }.
\end{equation}
Write $L_\bd(f, T; s)^{(-1)^{n-1}} = \mathcal{A}(T, s) / \mathcal{B}(T, s)$ as a reduced fraction of coprime $T$-adic entire series. Then $\mathcal{A}$ is a factor of the numerator of \eqref{equ: L into Js}, and $\mathcal{B}$ a factor of the denominator. Set $\HP_\bd(\Delta) := \HP^+(\Delta) \oplus \HP^-(\Delta)$. 

\begin{theorem}
The $T$-adic Newton polygons of $\mathcal{A}$ and $\mathcal{B}$ are both bounded below by 
\[
a(p-1) \cdot \HP_\bd(\Delta)^{\oplus 2^{n-1}}.
\]
\end{theorem}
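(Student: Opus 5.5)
Since $\mathcal{A}$ divides the numerator of \eqref{equ: L into Js} and $\mathcal{B}$ divides its denominator, and removing reciprocal zeros from a $T$-adic entire series only raises its Newton polygon (the argument at the end of the proof of Theorem~\ref{thm:noh_entire_constituents}), it suffices to bound the Newton polygons of the numerator and denominator of \eqref{equ: L into Js}. The numerator is
\[
\prod_{J \in J_{\mathrm{even}}} A(T, q^{d_J} s) \prod_{J \in J_{\mathrm{odd}}} B(T, q^{d_J} s).
\]
Newton polygons add under products, so its Newton polygon is the $\oplus$-sum of the Newton polygons of the factors. The denominator has the same shape with the roles of $A$ and $B$ swapped, so the same argument will apply to it.

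First, I will handle the substitution $s \mapsto q^{d_J} s$. If $H(T,s) = \sum_k h_k(T) s^k$, then $H(T, q^{d_J}s)$ has coefficients $q^{d_J k} h_k(T)$. Since $q = p^a$ is a unit in $\Z_p[[T]]$ with respect to $\ord_T$ (indeed $\ord_T(q h) = \ord_T(h)$, as $q$ is a nonzero constant), we get $\ord_T$ of each coefficient unchanged. Hence $\NP_T(H(T, q^{d_J}s)) = \NP_T(H(T,s))$. So Theorem~\ref{thm:noh_entire_constituents} gives $\NP_T(A(T,q^{d_J}s)) \ge a(p-1)\HP^+(\Delta)$ and $\NP_T(B(T,q^{d_J}s)) \ge a(p-1)\HP^-(\Delta)$.

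Second, I will count the subsets. For $n \ge 1$, $|J_{\mathrm{even}}| = |J_{\mathrm{odd}}| = 2^{n-1}$. So the numerator is a product of $2^{n-1}$ factors of type $A$ and $2^{n-1}$ factors of type $B$, and its Newton polygon is bounded below by
\[
\big(a(p-1)\HP^+(\Delta)\big)^{\oplus 2^{n-1}} \oplus \big(a(p-1)\HP^-(\Delta)\big)^{\oplus 2^{n-1}} = a(p-1)\cdot \HP_\bd(\Delta)^{\oplus 2^{n-1}}.
\]
The same bound holds for the denominator.

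The step needing the most care, though it is routine, is the monotonicity of lower bounds under $\oplus$. If $\mathcal{P}_i \ge \mathcal{Q}_i$ for each $i$, then $\bigoplus \mathcal{P}_i \ge \bigoplus \mathcal{Q}_i$. I will justify this via the characterization of a convex polygon as the lower envelope of the sums of its segments: the value of $\bigoplus\mathcal{P}_i$ at $x$ is the infimum over $x = \sum x_i$ of $\sum \mathcal{P}_i(x_i)$. This works because slopes are sorted and the polygons are convex, and the infimum formula is monotone in each $\mathcal{P}_i$. I also need the convention that finite-length polygons extend by $+\infty$ in the comparison, and that slope scaling by $a(p-1)>0$ commutes with $\oplus$ and length scaling. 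Combining these gives the theorem for both $\mathcal{A}$ and $\mathcal{B}$.
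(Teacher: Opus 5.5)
Your proposal is correct and follows the paper's proof. Like the paper, you bound each factor of the unreduced numerator and denominator of \eqref{equ: L into Js} using Theorem~\ref{thm:noh_entire_constituents}, note that $s \mapsto q^{d_J}s$ leaves $T$-adic Newton polygons unchanged, add polygons using $|J_{\mathrm{even}}| = |J_{\mathrm{odd}}| = 2^{n-1}$, and pass to $\mathcal{A},\mathcal{B}$ because removing reciprocal zeros only raises the polygon. Your infimal-convolution argument for monotonicity of lower bounds under $\oplus$ correctly spells out what the paper leaves to ``polygon arithmetic.''
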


\begin{proof}
Since $q$ does not affect the $T$-adic valuation, for any subset $J$, we have $\NP_T\!\big(A(T, q^{d_J} s)\big) = \NP_T(A(T, s))$ and $\NP_T\!\big(B(T, q^{d_J} s)\big) = \NP_T(B(T, s))$. Applying Theorem~\ref{thm:noh_entire_constituents} to the numerator and denominator of \eqref{equ: L into Js}, we get
\begin{align*}
\NP_T(\mathcal{A}) &\;\ge\; \bigoplus_{J \in J_{\mathrm{even}}} a(p-1)\HP^+(\Delta) \;\oplus \bigoplus_{J \in J_{\mathrm{odd}}} a(p-1)\HP^-(\Delta), \\
\NP_T(\mathcal{B}) &\;\ge\; \bigoplus_{J \in J_{\mathrm{odd}}} a(p-1)\HP^+(\Delta) \;\oplus \bigoplus_{J \in J_{\mathrm{even}}} a(p-1)\HP^-(\Delta).
\end{align*}
The results follow by polygon arithmetic and that $\HP^\pm(\Delta)$ are independent of $J$.
\end{proof}

We now specialize to a character $\psi$. For $J \subseteq \{1, \ldots, n\}$, let $\varepsilon(J) \in \{+, -\}$ denote the parity of $|J|$, with $+$ if $|J|$ is even, and $-$ if $|J|$ is odd. Define the specialized Hodge polygons for the numerator and denominator as:
\begin{align*}
\HP_{\bd, \psi}^+(\Delta) &:= \bigoplus_{J \subseteq \{1, \dots, n\}} \Big( \frac{1}{p^{m-1}} \cdot \HP^{\varepsilon(J)}(\Delta) + d_J \Big), \\
\HP_{\bd, \psi}^-(\Delta) &:= \bigoplus_{J \subseteq \{1, \dots, n\}} \Big( \frac{1}{p^{m-1}} \cdot \HP^{-\varepsilon(J)}(\Delta) + d_J \Big).
\end{align*}

We note that since $\psi(1) = 1 + \pi_\psi$ is a primitive $p^m$-th root of unity, we have $\ord_{\pi_\psi}(p) = \varphi(p^m) = p^{m-1}(p-1)$. Consequently, $\ord_{\pi_\psi}(q) = a \varphi(p^m)$.

\begin{theorem}
Write $L_{\bd, \psi}(f; s)^{(-1)^{n-1}} = A_{\psi}(s) / B_{\psi}(s)$ as a reduced fraction with $A_\psi, B_\psi \in 1 + s \Z[\zeta_{p^m}][s]$. Then their $q$-adic Newton polygons satisfy
\[
\NP_q(A_{\psi}) \;\ge\; \HP_{\bd, \psi}^+(\Delta) \qquad \text{and} \qquad \NP_q(B_{\psi}) \;\ge\; \HP_{\bd, \psi}^-(\Delta).
\]
\end{theorem}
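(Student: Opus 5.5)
Specialize the expansion \eqref{equ: L into Js} at $T = \pi_\psi$ and read off Newton polygons factor by factor. By Corollary \ref{cor: L is T-adic mero} and Theorem \ref{thm:Cd_meromorphic}, $C_\bd(f,T;s) = A(T,s)/B(T,s)$ with $A,B$ $T$-adic entire in $1+s\Z_p[[T]][[s]]$. Specializing \eqref{equ: L into Js} coefficientwise gives
\[
L_{\bd,\psi}(f;s)^{(-1)^{n-1}} = \frac{\prod_{J \in J_{\mathrm{even}}} A_\psi(q^{d_J}s)\prod_{J\in J_{\mathrm{odd}}} B_\psi(q^{d_J}s)}{\prod_{J \in J_{\mathrm{odd}}} A_\psi(q^{d_J}s)\prod_{J\in J_{\mathrm{even}}} B_\psi(q^{d_J}s)},
\]
where $A_\psi := A(\pi_\psi,s)$ and $B_\psi := B(\pi_\psi,s)$ are $\pi_\psi$-adic entire by Lemma \ref{lem:np_transport}. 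Here I use the notation $A_\psi, B_\psi$ of the proof of Theorem \ref{thm:specialized_noh}, not that of the present statement. Their $\pi_\psi$-adic Newton polygons are bounded below by $a(p-1)\HP^{\pm}(\Delta)$, as shown in the proof of Theorem \ref{thm:specialized_noh}.

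The first step is the change of normalization. Since $\ord_{\pi_\psi}(q) = a\varphi(p^m) = a(p-1)p^{m-1}$, dividing $\pi_\psi$-adic slopes by $a(p-1)p^{m-1}$ converts them to $q$-adic slopes. So $\NP_q(A_\psi) \ge \frac{1}{p^{m-1}}\HP^+(\Delta)$, and likewise for $B_\psi$ with $\HP^-(\Delta)$. Next, the substitution $s \mapsto q^{d_J}s$ multiplies the coefficient of $s^k$ by $q^{kd_J}$, so every $q$-adic slope shifts up by $d_J$. Hence
\[
\NP_q\bigl(A_\psi(q^{d_J}s)\bigr) \ge \tfrac{1}{p^{m-1}}\HP^+(\Delta) + d_J, \qquad \NP_q\bigl(B_\psi(q^{d_J}s)\bigr) \ge \tfrac{1}{p^{m-1}}\HP^-(\Delta) + d_J.
\]
I will phrase lower bounds polygon-wise. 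A Newton polygon bounded below by $\mathcal P$ means its reciprocal roots, sorted by slope, dominate those of $\mathcal P$ in the cumulative sense. Such bounds are preserved under $\oplus$ of entire series, using the multiplicativity of Newton polygons from Section \ref{sec: polygons}.

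Taking products over $J$, the displayed numerator has Newton polygon at least $\bigoplus_J \bigl(\frac{1}{p^{m-1}}\HP^{\varepsilon(J)}(\Delta)+d_J\bigr) = \HP^+_{\bd,\psi}(\Delta)$. Indeed, $J$ even contributes $A_\psi$, which goes with $\HP^+$, and $J$ odd contributes $B_\psi$, which goes with $\HP^-$. Symmetrically, the denominator is bounded below by $\HP^-_{\bd,\psi}(\Delta)$.

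The final step passes to the reduced fraction. By Theorem \ref{thm: rationality} the left side is rational, and its reduced form $A_\psi(s)/B_\psi(s)$ of the statement has polynomial $A_\psi$ dividing the entire numerator above and polynomial $B_\psi$ dividing the entire denominator. This follows by comparing zeros in $\C_p$: the zeros of the rational function with their multiplicities are exactly the zeros of the numerator left after cancellation against the denominator. Removing reciprocal roots only raises the Newton polygon, as argued in the proof of Theorem \ref{thm:noh_entire_constituents}, which gives the claim.

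The main obstacle is this last reduction step, because the numerator and denominator here are infinite entire series rather than polynomials. The divisibility must therefore be justified through Weierstrass/zero factorization of $p$-adic entire functions. It also needs the convention that "bounded below" compares a finite polygon against an infinite one, with the finite polygon extended by $+\infty$ beyond its length. Care is also needed in converting $\pi_\psi$-adic to $q$-adic valuations over $\Z[\zeta_{p^m}]$.
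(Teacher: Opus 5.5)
Your proposal is correct and follows the paper's proof. Both arguments specialize each factor $A(T,q^{d_J}s)$, $B(T,q^{d_J}s)$ of the finite product expansion of $L_\bd(f,T;s)^{(-1)^{n-1}}$ at $T=\pi_\psi$, bound it by $a(p-1)\HP^{\pm}(\Delta)$ via the specialization inequality, convert to $q$-adic slopes using $\ord_{\pi_\psi}(q)=a\varphi(p^m)$ with a shift of $d_J$ from $s\mapsto q^{d_J}s$, and then cancel common roots to reach the reduced fraction. The only difference is bookkeeping order: the paper applies the shift before converting to $q$-adic slopes, you do it after, and this is immaterial.
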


\begin{proof}
Consider a $T$-adic entire series $H(T, s) = \sum_{k=0}^\infty c_k(T) s^k$. Specializing $T = \pi_\psi$ and setting $s \mapsto q^r s$ gives $H(\pi_\psi, q^r s) = \sum_{k=0}^\infty c_k(\pi_\psi) q^{rk} s^k$. By Lemma \ref{lem:specialization}, $\ord_{\pi_\psi} c_k(\pi_\psi)  \ge \ord_T c_k(T)$. Since $\ord_{\pi_\psi}(q) = a \varphi(p^m)$, we have
\[
\ord_{\pi_\psi}\!\big(c_k(\pi_\psi) q^{rk}\big) \;\ge\; \ord_T(c_k(T)) + k r a \varphi(p^m).
\]
In terms of polygon arithmetic, this means
\[
\NP_{\pi_\psi}\!\big(H(\pi_\psi, q^r s)\big) \;\ge\; \NP_T(H) + r a \varphi(p^m).
\]

We apply this argument to each factor in the unreduced numerator and denominator of \eqref{equ: L into Js} using Theorem~\ref{thm:noh_entire_constituents}. The result follows by polygon arithmetic and since the reduced factors \(A_\psi(s)\) and \(B_\psi(s)\) are obtained through cancelling common roots, which possibly increases the Newton polygons. 
\end{proof}

\bibliographystyle{amsplain}
\bibliography{../References/References}

\end{document}